\newtheorem{theorem}{Theorem}[section]
\newtheorem{corol}[theorem]{Corollary}
\theoremstyle{definition}
\newtheorem{definition}[theorem]{Definition}
\newtheorem{example}[theorem]{Example}
\newtheorem{lemma}[theorem]{Lemma}
\newtheorem{problem}[theorem]{Problem}
\newtheorem{prop}[theorem]{Proposition}
\newtheorem{remark}[theorem]{Remark}
\numberwithin{equation}{section}
\newcommand{\Rr}{\mathbb R}
\newcommand{\set}[1]{\left\{#1\right\}}
\newcommand{\inner}[1]{\left\langle#1\right\rangle}      
\newcommand{\X}{\ensuremath{\mathfrak{X}}}
\newcommand{\F}{\ensuremath{\mathcal{F}}}
\renewcommand{\d}{\mathrm d}
\newcommand{\R}{\ensuremath{\mathcal{R}}}
\DeclareMathOperator{\GL}{GL}
\DeclareMathOperator{\Sp}{Sp}
\DeclareMathOperator{\tr}{trace}
\DeclareMathOperator{\Inv}{Inv}
\DeclareMathOperator{\Diff}{Diff}
\DeclareMathOperator{\modular}{mod}     
\newcommand{\Jet}{\text{\rm J}}
\newcommand{\jet}{\text{\rm j}}
\newcommand{\G}{\mathcal{G}}            
\newcommand{\s}{\mathbf{s}}             
\renewcommand{\t}{\mathbf{t}}           
\newcommand{\al}{\alpha}                
\newcommand{\Lie}{\mathscr{L}}          
\renewcommand{\gg}{\mathfrak{g}}        
\newcommand{\hh}{\mathfrak{h}}          
\newcommand{\ggl}{\mathfrak{gl}}          
\newcommand{\ssp}{\mathfrak{sp}}          
\newcommand{\ad}{\text{\rm ad}\,}       
\newcommand{\tto}{\rightrightarrows}    
\newcommand{\wmc}{\omega_{\textrm{MC}}}   
\newcommand{\rank}{\text{\rm rk}\,}   
\newcommand{\B}{\mathrm{F}}                    
\newcommand{\K}{\mathcal{K}}                     
\begin{document}
\title[Classifying Lie Algebroid of a Geometric Structure]{The classifying Lie algebroid of a geometric structure I: classes of coframes}
\author{Rui Loja Fernandes}
\address{Departamento de Matem\'atica, Instituto Superior T\'ecnico, 1049-001, Lisbon, Portugal}
\curraddr{Department of Mathematics, The University of Illinois at Urbana-Champaign, 1409 W. Green
Street, Urbana, IL 61801, USA}
\email{ruiloja@illinois.edu}

\author{Ivan Struchiner}
\address{Departamento de Matem\'atica, Universidade de S\~ao Paulo, Rua do Mat\~ao 1010, 
S\~ao Paulo -- SP, Brasil, CEP: 05508-090}
\email{ivanstru@ime.usp.br}

\thanks{RLF was partially supported by FCT through the Program POCI 2010/FEDER and by projects PTDC/MAT/098936/2008 and PTDC/MAT/117762/2010. IS was partially supported by FAPESP 03/13114-2, CAPES BEX3035/05-0 and by NWO}
\subjclass[2010]{ 53C10 (Primary) 53A55, 58D27, 58H05 (Secondary)}
\date{}

\begin{abstract} We present a systematic study of symmetries, invariants and moduli spaces of classes of coframes. We introduce a classifying Lie algebroid to give a complete description of the solution to Cartan's realization problem that applies to both the local and the global versions of this problem.
\end{abstract}

\maketitle
\tableofcontents

\section{Introduction}

This is the first of two papers dedicated to a systematic study of symmetries, invariants and moduli spaces of geometric structures of \emph{finite type}. Roughly speaking, a geometric structure of finite type can be described as any geometric structure on a smooth manifold which determines (and is determined by) a coframe on a (possibly different) manifold. Here by a \textbf{coframe} on an $n$-dimensional manifold $M$ we mean a generating set $\set{\theta^1,\ldots, \theta^n}$ of everywhere linearly independent $1$-forms on $M$. 

Examples of finite type geometric structures include finite type $G$-structures (see, e.g., \cite{Sternberg} or \cite{FernandesStruchiner}), Cartan geometries (see, e.g., \cite{Sharpe} or \cite{FernandesStruchiner}) or linear connections which preserve some tensor on a manifold (see, e.g., \cite{Kobayashi}).

The main problem concerning finite type structures is that of determining when two such geometric structures are isomorphic. This is a classical problem, that goes back to \'Elie Cartan \cite{Cartan}, and is usually refered to as the \emph{Cartan equivalence problem}.  When two geometric structures of the same kind have been properly characterized by coframes $\set{\theta^i}$ and $\set{\bar{\theta^i}}$ on $n$-dimensional manifolds $M$ and $\bar{M}$, the equivalence problem takes the form:

\begin{problem}[Equivalence Problem]
\label{equiv:probl}
Does there exist a diffeomorphism $\phi:M\rightarrow\bar{M}$ satisfying
\[
\phi^{\ast}\bar{\theta}^{i}=\theta^{i}
\]
for all $1 \leq i \leq n$?
\end{problem}

We will show that to a coframe there is associated a \textbf{classifying Lie algebroid} which encodes the corresponding equivalence problem. This classifying algebroid plays a crucial role in various geometric, local and global, classification problems associated with coframes.

In order to explain how this algebroid arises, recall that the solution to the equivalence problem is based on the simple fact that exterior differentiation and pullbacks commute: taking exterior derivatives and using the fact that $\{ \theta^{i} \}$ is a coframe, we may write the
\textbf{structure equations}
\begin{equation}\label{structureeqcoframe}
\d \theta^{k}= \sum_{i<j}C_{ij}^{k}(x)  \theta^{i}\wedge\theta^{j}
\end{equation}
for uniquely defined functions $C_{ij}^{k} \in C^{\infty}( M )$ known as \textbf{structure functions}. Analogously, we may write
\[\d \bar{\theta}^{k} = \sum_{i<j}\bar{C}_{ij}^{k}(\bar{x})
\bar{\theta}^{i} \wedge \bar{\theta}^{j}\text{.}\]
It then follows from $\phi^{\ast} \d \bar{\theta}^{k} = \d \phi^{\ast}\bar{\theta}^{k}$, that a necessary condition for equivalence is that
\[\bar{C}_{ij}^{k} (\phi(x)) = C_{ij}^{k}(x).\]
Thus, the structure functions are \emph{invariants} of the coframe. One can obtain more invariants of the coframe by further differentiation and this is the key remark to solve the equivalence problem.

In the opposite direction, one may consider the problem of determining when a given set of functions $C_{ij}^k$ can be  realized as the structure functions of some coframe. This problem was proposed and solved by Cartan in \cite{Cartan}. Following Bryant (see the Appendix in \cite{Bryant}), we state it in the following precise form:

\begin{problem}[Cartan's Realization Problem]
\label{realiz:probl}
One is given:
\begin{itemize}
\item an integer $n\in\mathbb{N}$,
\item a $d$-dimensional manifold $X$,
\item a set of functions $C_{ij}^{k}\in C^{\infty}(X)$ with indexes $1\leq i,j,k\leq n$,
\item and $n$ vector fields $F_{i}\in\X(X)$
\end{itemize}
and asks for the existence of
\begin{itemize}
\item an $n$-dimensional manifold $M$;
\item a coframe $\theta=\{\theta^{i}\}$ on $M$;
\item a map $h : M \rightarrow X$
\end{itemize}
satisfying
\begin{align}
\d \theta^{k} & = \sum_{i<j}(C_{ij}^{k}\circ h) \theta^{i}\wedge\theta^{j} \label{eq: dtheta}\\
\d h & = \sum_{i}(F_{i}\circ h) \theta^{i}\text{.} \label{eq: dh}
\end{align}
\end{problem}

Notice that the right hand side of equation \eqref{eq: dh} defines a bundle map $TM \to TX$ by first applying the $\theta^i$'s to the tangent vector and evaluating the vector fields $F_i$'s, and then taking the corresponding linear combination.

Most often, in concrete examples, one is given the structure functions and looks for all the possible coframes realizing it. In fact, one is interested in the following two problems:

\begin{description}
\item[Local Classification Problem] What are all germs of coframes which solve Cartan's realization problem?
\item[Local Equivalence Problem] When are two such germs of coframes equivalent?
\end{description}

Later, we shall give complete solutions to the existence problem, as well as both the classification and the equivalence problems. Before we proceed, let us present a simple, classic, but elucidating example showing that Lie theory enters into the picture.

\begin{example}[The case $d=0$]
\label{ex:constant}
Suppose that we are interested in the equivalence and classification problem for coframes whose structure functions are constant, or in the precise formulation of Cartan's realization problem given above, the case where the manifold $X$ reduces to a point (in particular, the vector fields $F_i$ vanish identically).

Necessary conditions for the existence of a solution to the realization problem are obtained from
\[\d^{2}\theta^{m}=0 \qquad (m = 1 , \ldots n).\]
These conditions imply that the the structure constants must satisfy:
\[ C_{im}^lC_{jk}^m+C_{jm}^lC_{ki}^m+C_{km}^lC_{ij}^m=0, \qquad (i,j,k,m = 1 , \ldots n),\]
i.e., they are the structure constants of a Lie algebra $\gg$ relative to some basis $\set{e_1, \ldots, e_n}$:
\[ [e_i,e_j]=C_{ij}^ke_k.\]

This condition is also sufficient. In fact, if we let $M:=G$ be any Lie group which has $\gg$ as its Lie algebra, then the components $\theta^i$ of its $\gg$-valued right invariant Maurer-Cartan form $\wmc$ relative to some basis $\set{e_1, \ldots, e_n}$ of $\gg$, have $C^k_{ij}$ as its structure functions.

Now let $(M, \theta^i, h)$ be any other solution of the realization problem. If we define a $\mathfrak{g}$-valued $1$-form on $M$ by
\[ \theta=\sum_{i}\theta^{i}e_{i},\]
then the structure equation \eqref{eq: dtheta} for the
coframe $\{ \theta^{i}\}$ implies that $\theta$ satisfies the
\emph{Maurer-Cartan equation}
\[\d \theta + \frac{1}{2}[\theta,\theta] = 0\text{.}\]
It is then well known that there exists a locally defined diffeomorphism $\psi:M\rightarrow G$ onto a
neighborhood of the identity such that
\[ \theta=\psi^{\ast}\wmc.\] 
This is sometimes refered to as \emph{the universal property of the Maurer-Cartan form}.
Thus, at least locally, there is only one solution to the realization problem, up to equivalence.
\end{example}

In general, when the structure functions are not constant, they cannot determine a Lie algebra. Instead, we have the following result which is the basic proposition which underlies all our approach:

\begin{theorem}
Solutions to Cartan's problem exist if and only if the initial data to the problem determines a Lie algebroid structure on the trivial vector bundle $A\to X$ with fiber $\Rr^n$, Lie bracket $[~,~]_A:\Gamma(A)\times\Gamma(A)\to \Gamma(A)$ relative to the standard basis of sections $\{e_1\dots,e_n\}$ given by:
\[ [e_i,e_j]_A=C_{ij}^k(x)e_k,\]
and anchor $\sharp:A\to TM$ defined by $\sharp(e_i)=F_i$.
\end{theorem}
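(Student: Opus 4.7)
The plan is to establish the two implications separately by unpacking what the Lie algebroid axioms mean concretely on the standard basis $\{e_i\}$ and then relating them to the realization equations through $d^2=0$.

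For \emph{necessity}, suppose $(M,\theta,h)$ is a solution. Using bilinearity and the Leibniz rule, the Lie algebroid axioms for the proposed bracket and anchor reduce on the basis to two families of identities on $X$: first, the anchor--bracket compatibility $[F_i,F_j]=\sum_k C_{ij}^k F_k$; and second, a Jacobi-type identity of the form $\sum_{\mathrm{cyc}(i,j,l)}\bigl(\sum_m C_{ij}^m C_{ml}^k - F_l(C_{ij}^k)\bigr)=0$, in which the Leibniz rule generates the derivative term. The Jacobi identity follows from $0=d^2\theta^k$: substituting \eqref{eq: dtheta} twice and using \eqref{eq: dh} to compute $d(C_{ij}^k\circ h)=\sum_l F_l(C_{ij}^k)(h)\,\theta^l$, one obtains a $3$-form whose coefficient on each $\theta^l\wedge\theta^i\wedge\theta^j$ must vanish (since $\{\theta^i\}$ is a coframe); those coefficients are precisely the desired cyclic sum pulled back along $h$. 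The anchor compatibility drops out of $0=d^2h$, written in local coordinates on $X$, by extracting the coefficients of $\theta^i\wedge\theta^j$. Provided solutions collectively cover $X$ (or one restricts $X$ to $h(M)$), both identities hold pointwise on $X$, which is the required Lie algebroid structure.

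For \emph{sufficiency}, assume the data genuinely defines a Lie algebroid $A\to X$. I would invoke local integrability of Lie algebroids: there exists a (local) Lie groupoid $\G\tto X$ whose Lie algebroid is $A$. Pick $x_0\in X$, set $M:=\s^{-1}(x_0)$, and let $h:=\t|_M$. The Maurer--Cartan form $\wmc\in\Omega^1(\G;\s^*A)$, expressed in the standard basis as $\wmc=\sum_i\theta^i e_i$, restricts to an $A$-valued coframe on $M$ since $\s$ is a submersion of relative dimension $n=\rank A$. The Maurer--Cartan equation $d\wmc+\tfrac12[\wmc,\wmc]=0$ unpacks componentwise to \eqref{eq: dtheta}, and the anchor intertwining $d\t=\sharp\circ\wmc$ unpacks to \eqref{eq: dh}. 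This recovers Example 2.3 above as the special case $X=\{*\}$, $\G=G$.

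The main obstacle is the sufficiency direction, which depends on the local integrability of Lie algebroids and on a working Maurer--Cartan calculus for (local) Lie groupoids; both are classical but nontrivial inputs. A secondary subtlety on the necessity side is that $d^2=0$ only forces the identities on $h(M)\subset X$, so to conclude a Lie algebroid structure on all of $X$ one implicitly restricts $X$ to (a neighborhood of) the image of $h$.
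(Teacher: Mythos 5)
Your proof takes essentially the same route as the paper's: necessity is its Proposition \ref{NecessaryRealization} (apply $\d^2=0$ to the structure equations, with the same caveat that realizations must pass through every point of $X$), and sufficiency is the argument of Theorem \ref{thm: classification coframes}, which restricts $A$ to a contractible neighborhood of $x_0$ inside its leaf --- where the transitive restriction is honestly integrable --- rather than invoking a local groupoid, and then pulls the Maurer--Cartan form back to an $\s$-fiber. The one discrepancy is a sign: $\d^2h=0$ actually yields $[F_i,F_j]=-\sum_k C_{ij}^kF_k$, which is why the paper's Proposition \ref{NecessaryRealization} builds the algebroid with bracket $[\al_i,\al_j]=-\sum_kC_{ij}^k\al_k$; your unsigned identity matches the theorem as stated in the introduction but not the computation (an inconsistency the paper itself carries between its introduction and Section \ref{sec:ClassAlgbrd}).
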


We will call $A\to X$ the \textbf{classifying Lie algebroid} of the Cartan's realization problem. Also, the map $h:M\to X$, which determines the particular coframe we are interested in, will be called the \textbf{classifying map} of the realization. 

We will see in Section \ref{subsec:Algbrd}, that given a coframe $\theta$ on a manifold $M$, there is associated to it a Cartan Realization Problem, and hence a classifying Lie algebroid $A_\theta$. This will be called the \textbf{classifying Lie algebroid of the coframe}. It is important to make the following distinctions:
\begin{enumerate}
\item[(i)] To a \emph{single} coframe $\theta$, there is associated a Cartan realization problem and, hence, a classifying Lie algebroid $A_\theta$.
\item[(ii)] To a Cartan realization problem, there is associated a classifying Lie algebroid $A$ and a \emph{family} of coframes (its solutions). 
\end{enumerate} 
Note that distinct Cartan realization problems can share the same coframe as solutions. However, we have the following:

\begin{prop}
Let $(n,X,C_{ij}^k,F_i)$ be a Cartan realization problem with associated classifying algebroid $A$ and let $(M,\theta,h)$ be a connected realization. Then $h(M)$ is an open subset of an orbit of $A$ so that $A|_{h(M)}\to h(M)$ is also a Lie algebroid. Moreover, there exists a Lie algebroid morphism  from $A|_{h(M)}$ to $A_{\theta}$ (the classifying algebroid of the coframe $\theta$) which is a fiberwise isomorphism that covers a submersion $h(M) \to X_{\theta}$.
\end{prop}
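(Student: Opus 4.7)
My plan has three parts, one per claim. For the first, equation \eqref{eq: dh} rewrites as
\[
dh_x(v) = \sharp_{h(x)}\Bigl(\sum_i \theta^i_x(v)\, e_i\Bigr),
\]
so the image of $dh_x$ coincides with the anchor image $\sharp(A_{h(x)})$. Hence $dh$ has constant rank equal to the rank of the anchor along $h(M)$, and its image is tangent to the orbit foliation of $A$. Since $M$ is connected, $h(M)$ is contained in a single orbit $L$; and the rank equality makes $h \colon M \to L$ a submersion, so $h(M)$ is open in $L$.

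For the second claim, $A|_L \to L$ inherits a transitive Lie algebroid structure by the standard restriction-to-orbit construction: the anchor already lands in $TL$, so the bracket of extended local sections descends unambiguously. Restriction to the open subset $h(M) \subset L$ is then automatic, yielding the Lie algebroid $A|_{h(M)}$.

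For the third claim, I would use the classifying Lie algebroid $A_\theta \to X_\theta$ of the coframe from Section~\ref{subsec:Algbrd}, together with its canonical classifying map $h_\theta \colon M \to X_\theta$. Both $h$ and $h_\theta$ realize the same coframe $\theta$, so all the quantities that parametrize $X_\theta$ (the intrinsic structure functions of $\theta$ and their iterated invariants) are already determined on $h(M)$ by $(C_{ij}^k, F_i)$. This factorization produces a well-defined smooth map $\pi \colon h(M) \to X_\theta$ with $\pi \circ h = h_\theta$, and $\pi$ is a submersion because $h$ and $h_\theta$ are. Covering $\pi$, the identification of the common standard basis $\{e_i\}$ of $\Rr^n$ in the two algebroids gives a fiberwise isomorphism of vector bundles; it intertwines brackets and anchors because both sides encode the same structure functions of $\theta$ at each point of $M$, hence it is a Lie algebroid morphism.

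The main obstacle I expect is Step~3: verifying that $\pi$ is well defined and smooth requires the explicit construction of $X_\theta$ as the space of essential invariants of $\theta$, so that whenever $h(x_1) = h(x_2)$ in $h(M)$ one also has $h_\theta(x_1) = h_\theta(x_2)$. Once this identification is available, the algebroid morphism property itself is formal, since both $A|_{h(M)}$ and $A_\theta$ are built from the same Cartan data supplied by $\theta$.
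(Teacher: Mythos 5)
Your first two steps are essentially the paper's own argument: the identity $\d h=\sharp\circ\mathcal{H}_\theta$ coming from \eqref{eq: dh} gives constant rank, containment of $h(M)$ in a single orbit $L$, and openness; restriction of $A$ to an open subset of an orbit is standard. The gap is in Step 3, and it sits exactly where you flagged it. You justify the well-definedness of $\pi$ by saying that ``all the quantities that parametrize $X_\theta$'' are determined by $(C^k_{ij},F_i)$ through $h$. But $X_\theta$ is by definition the quotient of $M$ by local formal equivalence, i.e.\ by the \emph{full} set $\Inv(\theta)$ of functions invariant under all local self-equivalences of $\theta$ --- not merely the structure functions and their coframe derivatives. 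That every $f\in\Inv(\theta)$ fails to separate two points $p,q$ with $h(p)=h(q)$ is precisely the nontrivial content here; asserting that $X_\theta$ is ``parametrized by the intrinsic structure functions and their iterated invariants'' presupposes the local equivalence theory rather than deriving it, so as written the argument is circular.

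The paper closes this gap by invoking the Local Classification Theorem (Theorem \ref{thm: classification coframes}, itself resting on the local universal property of the Maurer--Cartan form on a groupoid integrating $A|_L$): if $h(p)=h(q)$, the two germs of the realization at $p$ and $q$ correspond to the same point of $X$, hence there is a locally defined diffeomorphism $\phi$ with $\phi(p)=q$ and $\phi^*\theta=\theta$. Since $\phi$ is then a local self-equivalence of the coframe, \emph{every} $f\in\Inv(\theta)$ satisfies $f(p)=f(q)$ by definition, so $\kappa_\theta(p)=\kappa_\theta(q)$ and $\psi:h(M)\to X_\theta$ is well defined; smoothness and submersivity follow because $\kappa_\theta$ is a surjective submersion and $h$ is a submersion onto $h(M)$. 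Once $\psi$ exists, your final paragraph is fine: the two fiberwise-isomorphic algebroid morphisms $TM\to A|_{h(M)}$ and $TM\to A_\theta$ induced by $\theta$ force the induced bundle map over $\psi$ to intertwine brackets and anchors. So the fix is not a reorganization but the insertion of the equivalence-producing step: you must actually manufacture the local self-equivalence carrying $p$ to $q$, and the only available tool for that is the Maurer--Cartan universal property.
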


In general, a Lie algebroid does not need to be associated with any Lie groupoid (see \cite{CrainicFernandes} for a complete discussion of this point). However, assume for now that this is the case for the classifying Lie algebroid $A\to X$ and denote by $\G\tto X$ a Lie groupoid integrating it. One can then introduce the Maurer-Cartan form $\wmc$ on $\G$ and it follows from its universal property that:

\begin{theorem}
Let $\wmc$ be the Maurer-Cartan form on a Lie groupoid $\G$ integrating the classifying Lie algebroid $A$ of a Cartan's realization problem. Then the pull-back of $\wmc$ to each $\s$-fiber induces a coframe which gives a solution to the realization problem. Moreover, every solution to Cartan's problem is locally equivalent to one of these. 
\end{theorem}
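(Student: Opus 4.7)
The strategy is to reformulate Cartan's realization problem as the existence of a Lie algebroid morphism $TM \to A$, and then to invoke the universal property of the Maurer-Cartan form on a Lie groupoid. Concretely, I would first observe that a triple $(M,\theta,h)$ is a solution of Problem \ref{realiz:probl} if and only if the bundle map $\Theta: TM \to A$ defined by $\Theta(v) = \sum_i \theta^i(v)\, e_i|_{h(p)}$ (for $v \in T_pM$) is a Lie algebroid morphism covering $h$. Indeed, in the trivialization $A = X\times\Rr^n$, the anchor compatibility $\sharp_A\circ\Theta = dh$ unfolds exactly into equation \eqref{eq: dh}, while bracket compatibility is equivalent, via the standard Maurer-Cartan calculus for $A$-valued forms, to the structure equation \eqref{eq: dtheta}.

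For the existence claim, recall that $\wmc$ is the $A$-valued $1$-form on $\G$ that at $g \in \G$ is given by $\wmc_g = dR_{g^{-1}}: T_g\s^{-1}(\s g) \to A_{\t g}$, a pointwise linear isomorphism. Restricting $\wmc$ to an $\s$-fiber $\s^{-1}(x)$, its components $(\theta^1,\ldots,\theta^n)$ in the trivialization $A = X\times\Rr^n$ are pointwise linearly independent and therefore form a coframe on $\s^{-1}(x)$. The groupoid Maurer-Cartan equation $d\wmc + \tfrac12[\wmc,\wmc] = 0$ precisely expresses that $\wmc$ is a Lie algebroid morphism $T\G \to A$; restricting to the inclusion $\s^{-1}(x) \hookrightarrow \G$ preserves this property, and taking $h = \t|_{\s^{-1}(x)}$ (so that $\sharp \circ \wmc|_{T\s^{-1}(x)} = d\t|_{T\s^{-1}(x)} = dh$), the reformulation above shows that $(\s^{-1}(x),\theta,h)$ is a solution to the realization problem.

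For the local uniqueness claim, I would apply the universal property of the Maurer-Cartan form in the Lie groupoid setting, namely: every Lie algebroid morphism $\Theta: TM \to A$ is locally of the form $\psi^*\wmc$ for a smooth map $\psi: U \to \G$, uniquely determined by its value at one point. Given a solution $(M,\theta,h)$, fix $p_0 \in M$, set $x_0 = h(p_0)$, and let $\psi: U \to \G$ be the integration of $\Theta$ with $\psi(p_0) = 1_{x_0}$. Since $\Theta$ takes values in $A = \ker d\s|_X$, we have $d(\s\circ\psi) \equiv 0$, so $\psi$ maps $U$ into the $\s$-fiber $\s^{-1}(x_0)$. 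The identity $\psi^*\wmc = \Theta$ then says $\psi$ pulls back the coframe on $\s^{-1}(x_0)$ of the first part back to $\theta$; because $\Theta$ is a fiberwise isomorphism (coframes give everywhere-injective bundle maps), $\psi$ is a local diffeomorphism onto its image, providing the desired local equivalence.

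The main technical step is the final integration. As in the classical Lie group case of Example \ref{ex:constant}, this reduces to an ODE argument: $\psi$ is constructed along paths in $M$ by integrating the right-invariant vector fields on $\G$ associated to the sections of $A$ prescribed by $\theta$, and independence of the path follows from the Maurer-Cartan equation satisfied by $\theta$. No global integrability or simple-connectedness hypothesis is required, because the assertion is purely local.
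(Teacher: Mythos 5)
Your proposal is correct and follows essentially the same route as the paper: a realization is recast as a fiberwise-isomorphic Lie algebroid morphism $TM\to A$ (this is Proposition \ref{prop:morphisms}), the restriction of $\wmc$ to an $\s$-fiber is checked to be such a morphism over $\t$, and the local equivalence of an arbitrary solution to an $\s$-fiber is then exactly the local universal property of the Maurer--Cartan form (Theorem \ref{Universal}). The only divergence is in the final technical step, which you leave as a sketch (``integrate right-invariant vector fields along paths, path-independence from the Maurer--Cartan equation''), whereas the paper makes this precise by applying Frobenius to the constant-rank distribution $\ker\left(\pi_{M}^{\ast}\theta-\pi_{\G}^{\ast}\wmc\right)$ on the fibered product of $M$ and $\G$ over $X$ (and, alternatively, by Lie's Second Theorem applied to the pair groupoid).
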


This settles the classification and the equivalence problems. It also shows one of the main advantages of using our Lie algebroid approach: the classifying Lie algebroid (or more precisely its local Lie groupoid) gives rise to a recipe for constructing explicit solutions to the problem. The solutions are the $\s$-fibers of the groupoid equipped with the restriction of the Maurer-Cartan forms. There are several other advantages in using this Lie algebroid approach.

First of all, it turns out that properties of the classifying Lie algebroid are closely related to geometric properties of the coframes it classifies. For example, let us call any self-equivalence of a coframe which preserves the classifying map $h$ a \textbf{symmetry of the realization}. Then: 

\begin{prop}
Let $A\to X$ be the classifying Lie algebroid of a Cartan's realization problem. Then:
\begin{enumerate}
\item To every point on $X$ there corresponds a germ of a coframe which is a solution to the realization problem;
\item Two such germs of realizations are equivalent if and only if they correspond to the same point in $X$;
\item The isotropy Lie algebra of $A\to X$ at some $x\in X$ is isomorphic to the \emph{symmetry Lie algebra} of the corresponding germ of realization.
\end{enumerate}
\end{prop}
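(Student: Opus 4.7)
The plan is to reduce all three claims to the model realizations furnished by the preceding theorem. Let $\G \tto X$ be a (local) Lie groupoid integrating $A$; each $\s$-fiber $\s^{-1}(x)$, equipped with $\wmc|_{\s^{-1}(x)}$ as its coframe and $\t|_{\s^{-1}(x)}$ as its classifying map, is a realization, and every other realization is locally equivalent to one of these.

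For part (1), given $x \in X$ I will simply take the germ at the unit $1_x$ of the model realization on $\s^{-1}(x)$; since $\t(1_x) = x$, this germ corresponds to $x$, establishing existence.

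For part (2), the $(\Rightarrow)$ direction should be essentially immediate from the fact that an equivalence of realizations matches classifying maps at basepoints; if the definition is taken to involve only the coframe, I will recover this by observing that $\phi^*\bar\theta^i = \theta^i$ forces $\bar h \circ \phi$ to satisfy the same first-order system \eqref{eq: dh} as $h$, so by uniqueness of solutions $\bar h \circ \phi = h$ locally and in particular $h(x_0) = \bar h(\bar x_0)$. For $(\Leftarrow)$, if $h(x_0) = \bar h(\bar x_0) = x$ I will apply the preceding theorem to each germ separately to obtain a local equivalence to the model germ at $1_x \in \s^{-1}(x)$, then compose one with the inverse of the other to produce the required equivalence between the original germs.

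For part (3), by part (2) the symmetry Lie algebra depends only on $x \in X$, so I may compute it using the model representative $(\s^{-1}(x), \wmc, \t)$. An infinitesimal symmetry is a vector field $Y$ on $\s^{-1}(x)$ satisfying $L_Y(\wmc|_{\s^{-1}(x)}) = 0$ and $d\t(Y) = 0$. The first condition will characterize $Y$ as the restriction of an invariant vector field on $\G$, hence determined by its value $a := Y_{1_x} \in A_x$; the second condition, via the identification $\sharp = d\t|_A$, becomes $\sharp(a) = 0$, so $a \in \ker \sharp_x$. Evaluation at $1_x$ is therefore a linear isomorphism from the symmetry Lie algebra onto $\ker \sharp_x$, and it is a Lie algebra map by the standard correspondence between invariant vector fields on $\G$ and sections of $A$.

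The main obstacle will be invoking the universal property of $\wmc$ carefully enough to pin down basepoints in both (2) and (3), especially in the (possibly only local) groupoid case; the remaining Lie-theoretic bookkeeping — matching the bracket of invariant fields with the algebroid bracket restricted to $\ker \sharp_x$ — is then routine.
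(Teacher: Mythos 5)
Your proposal is correct and follows essentially the same route as the paper: parts (1) and (2) are the paper's Theorem \ref{thm: classification coframes}, proved by integrating $A$ (over a contractible neighbourhood in a leaf when $A$ is not globally integrable) and invoking the local universal property of the Maurer--Cartan form to identify every germ with the model germ at $\mathbf{1}_{x}$ in $(\s^{-1}(x),\wmc,\t)$, while part (3) is the paper's Proposition \ref{prop:inf:sym:coframe}, which likewise transports the computation to the model fiber and uses the (infinitesimal version of) Corollary \ref{symmetrywmc} to identify infinitesimal symmetries with right-invariant vector fields annihilated by $\d\t$, i.e.\ with $\ker\sharp_x=\gg_x$. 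One caution: your fallback argument for the forward direction of (2) --- deducing $\bar h\circ\phi=h$ from ``uniqueness of solutions'' of \eqref{eq: dh} --- is circular without a matched initial condition (and indeed coframe equivalence does \emph{not} imply realization equivalence in general, as the paper remarks), but this is moot because the paper's definition of equivalence of realizations already requires $h_2\circ\phi=h_1$.
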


Second of all, the Lie algebroid approach is well fitted to the study of global aspects of the theory. The first of these global issues that we will consider is the

\begin{problem}[Globalization Problem]
\label{prob:globalization}
Given a Cartan's problem with initial data $(n, X, C^k_{ij}, F_i)$ and two germs of coframes $\theta_0$ and $\theta_1$ which solve the problem, does there exist a global connected solution $(M,\theta,h)$ to the realization problem for which $\theta_0$ is the germ of $\theta$ at a point $p_0 \in M$ and $\theta_1$ is the germ of $\theta$ at a point $p_1 \in M$?
\end{problem}

Another important global issue is the \emph{global equivalence problem}. In general, the classifying Lie algebroid does not distinguish between a realization and its covers. Here, by  a \textbf{realization cover} of $(M,\theta,h)$ we mean a realization of the form $(\tilde{M}, \pi^{\ast}\theta, \pi^{\ast}h)$ where $\pi: \tilde{M} \to M$ is a covering map.

It is then natural to consider the following equivalence relation on the set of realizations of a Cartan's problem. Two realization $(M_1, \theta_1, h_1)$ and $(M_2, \theta_2, h_2)$ are said to be \textbf{globally equivalent, up to covering} if they have a common realization cover $(M, \theta, h)$, i.e.,
\[\xymatrix{& (M, \theta, h) \ar[dl]_{\pi_1} \ar[dr]^{\pi_2} & \\
(M_1, \theta_1, h_1) & & (M_2, \theta_2, h_2).}\] 
This leads to:

\begin{problem}[Global Classification Problem]
\label{prob:global:equivalence}
What are all the solutions of a Cartan's realization problem up to global equivalence, up to covering?
\end{problem}

Of course, the solutions to these global problems rely, again, on understanding the classifying Lie algebroid.
We need to assume that the restriction of the classifying Lie algebroid to each orbit comes from a Lie groupoid. Note that this assumption is weaker than requiring integrability of the whole Lie algebroid $A$ (see \cite{CrainicFernandes}), so in this case we will say that $A$ is \emph{weakly integrable}.

For the globalization problem we then have the following solution:

\begin{theorem}
Let $A\to X$ be the classifying Lie algebroid of a Cartan's realization problem. Two germs of coframes which belong to the same global connected realization correspond to points in the same orbit of $A$. Moreover, if $A$ is weakly integrable then the converse is also true: two germs of coframes which correspond to points in the same orbit belong to the same global connected realization.
\end{theorem}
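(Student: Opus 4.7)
The plan is to address the two directions of the biconditional separately; the forward direction is immediate, while the converse requires a construction.

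For the forward implication, suppose $(M,\theta,h)$ is a global connected realization and $\theta_0,\theta_1$ are the germs of $\theta$ at points $p_0,p_1\in M$. The earlier Proposition asserts that $h(M)$ is an open subset of a single orbit of $A$; in particular $h(p_0)$ and $h(p_1)$ lie in this common orbit. Since the classifying correspondence sends a germ of realization to the value of its classifying map at the base point, this is exactly the statement that $\theta_0,\theta_1$ correspond to points in a common orbit of $A$.

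For the converse, assume $x_0,x_1\in X$ lie in a common orbit $O$ and correspond to the prescribed germs $\theta_0,\theta_1$. By weak integrability, pick a Lie groupoid $\G\tto O$ integrating $A|_O$; passing to its source-simply-connected cover, one may further assume each $\s$-fiber is connected. Fix any $x\in O$, set $M:=\s^{-1}(x)$, equip $M$ with the coframe $\theta$ obtained by restricting the components of the Maurer-Cartan form $\wmc$ on $\G$, and take as classifying map $h:=\t|_M$. The earlier Theorem guarantees that $(M,\theta,h)$ is a solution to Cartan's problem. Moreover $M$ is connected and $h(M)=\t(\s^{-1}(x))=O$, so we can choose $p_0,p_1\in M$ with $h(p_i)=x_i$. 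By the Proposition that two germs of realizations are equivalent if and only if they correspond to the same point of $X$, the germs of $(M,\theta,h)$ at $p_0$ and $p_1$ are equivalent to $\theta_0$ and $\theta_1$, so both germs belong to this single global connected realization, as required.

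The main obstacle I foresee is the connectedness issue: for an arbitrary Lie groupoid integrating $A|_O$, the $\s$-fibers may be disconnected, in which case $h(M)$ could miss one of the $x_i$. The resolution is to use the source-simply-connected integration, whose existence for an integrable Lie algebroid is a standard but nontrivial result. This is also precisely why the weaker hypothesis, namely integrability of each $A|_O$ rather than of the full $A$, suffices: we only need to integrate the single orbit that contains $x_0$ and $x_1$, and never need a global integration of $A$.
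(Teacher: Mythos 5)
Your proof is correct, and the converse direction coincides with the paper's: both integrate $A|_O$ (weak integrability sufficing precisely because only the single orbit needs to be integrated), take an $\s$-fiber equipped with the restricted Maurer--Cartan form as the global connected realization, and identify the two prescribed germs with germs of this realization via the local classification theorem. The only genuine divergence is in the forward direction. You dispose of it in one line by citing the corollary to Proposition \ref{prop:morphisms}, which states that $h$ maps each connected component of $M$ onto an open subset of a single orbit of $A$; hence $h(p_0)$ and $h(p_1)$ automatically lie in a common orbit. The paper instead re-derives this by covering a path from $p_0$ to $p_1$ by finitely many open sets on which $\theta$ is equivalent to the Maurer--Cartan form on an $\s$-fiber, and running an induction on the length of the chain. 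Your route is shorter and stays at the level of the anchor-compatibility of the Lie algebroid morphism $\mathcal{H}_\theta:TM\to A$, while the paper's chain argument makes the mechanism (local models glued along a path) explicit; logically the two are equivalent and neither introduces circularity, since the corollary is established in Section 3, well before the globalization theorem. One small remark: in the converse you pass to the source-simply-connected integration to guarantee connected $\s$-fibers; the source-connected component of any integration of $A|_O$ would already do, and for a source-connected groupoid over the orbit $O$ one indeed has $\t(\s^{-1}(x))=O$, which is the fact both you and the paper rely on to reach both $x_0$ and $x_1$ from a single fiber.
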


On the other hand, for the global equivalence problem we obtain the following result which generalizes the two main theorems concerning global equivalences due to Olver \cite{Olver}:

\begin{theorem}
Assume that the classifying Lie algebroid $A\to X$ of a Cartan's realization problem is weakly integrable.
Then any solution to Cartan's problem is globally equivalent, up to covering, to an open set of an $\s$-fiber of the source 1-connected Lie groupoid integrating $A|_{L}$, for some leaf of $A$. 
\end{theorem}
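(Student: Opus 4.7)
\emph{Proof proposal.} The plan is to produce the required common cover as a leaf of an involutive distribution on a fibered product, with integrability coming from the matching structure equations of $\theta$ and the Maurer-Cartan form, and then to upgrade the two projections to covering maps by exploiting weak integrability through $A$-path integration.

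First, set the stage. Given a connected realization $(M,\theta,h)$, the preceding proposition identifies $h(M)$ as an open subset of a single leaf $L$ of $A$, and weak integrability provides the source 1-connected Lie groupoid $\G\tto L$ integrating $A|_L$. Fix $p_0\in M$, set $x_0:=h(p_0)$, and let $\Sigma:=\s^{-1}(x_0)$ be the corresponding $\s$-fiber with its Maurer-Cartan coframe $\theta_\G$ and classifying map $\t|_\Sigma$. By the earlier theorem on the universal property of the Maurer-Cartan form, $(\Sigma,\theta_\G,\t|_\Sigma)$ is itself a realization of the same Cartan problem, locally equivalent to $(M,\theta,h)$ near $p_0\leftrightarrow 1_{x_0}$.

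Next, I would globalize this local equivalence by a Frobenius argument on $N := M\times_L\Sigma = \{(p,g) : h(p)=\t(g)\}$. Writing $V_i,W_i$ for the frames dual to $\theta^i$ and $\theta_\G^i$ on $M$ and $\Sigma$, consider the rank-$n$ distribution $D$ on $N$ spanned by $Z_i := V_i\oplus W_i$. Equation \eqref{eq: dh} shows $D\subseteq TN$, and a direct computation using that the structure functions of both coframes equal the pullbacks of the same $C_{ij}^k$ (which agree on $N$ since $h(p)=\t(g)$ there) gives $[Z_i,Z_j] = -\sum_k (C_{ij}^k\circ h)\, Z_k$, so $D$ is involutive. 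Let $M'$ be the leaf of $D$ through $(p_0,1_{x_0})$. The projections $\pi_1:M'\to M$ and $\pi_2:M'\to\Sigma$ are tautologically local diffeomorphisms that identify $\pi_1^\ast\theta$ with $\pi_2^\ast\theta_\G$ and $h\circ\pi_1$ with $\t\circ\pi_2$, so $(M',\pi_1^\ast\theta, h\circ\pi_1)$ is a realization.

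The hard part will be to upgrade $\pi_1$ to a covering of $M$ and $\pi_2$ to a covering of its open image $U:=\pi_2(M')\subseteq\Sigma$. By the path-lifting criterion, this amounts to lifting any curve $\gamma$ in $M$ from $p_0$ uniquely to $M'$, and symmetrically for curves in $U$ from $1_{x_0}$. For $\pi_1$, the velocity of $\gamma$ determines, via the algebroid morphism $TM\to A|_L$ built into the realization, an $A$-path $a(t)=\sum_i \theta^i(\dot\gamma(t))\,e_i$ in $A|_L$; weak integrability of $A|_L$ — equivalently, the existence of the source 1-connected $\G$ — is precisely the statement that every such $A$-path integrates over its full parameter interval to a path in $\Sigma$ issuing from $1_{x_0}$, providing the desired lift $(\gamma(t),g(t))\in M'$. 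The symmetric argument, using the $A$-path in $A|_L$ associated to a curve in $U$ via the Maurer-Cartan frame $W_i$ and using that $U$ is by construction the reachable set, yields path-lifting for $\pi_2$ onto $U$. The resulting diagram exhibits $(M',\pi_1^\ast\theta,h\circ\pi_1)$ as the common realization cover of $(M,\theta,h)$ and the open set $(U,\theta_\G|_U,\t|_U)$, establishing the claimed global equivalence up to covering.
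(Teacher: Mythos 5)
Your construction coincides with the paper's: the paper also forms the fibered product $M{\,_h\hskip-2pt}\times_{\t}\G_L$, takes the distribution $\mathcal{D}=\ker(\pi_M^{\ast}\theta-\pi_{\G}^{\ast}\wmc)$ (which is exactly the span of your $Z_i=V_i\oplus W_i$), lets the common cover be a maximal integral leaf, and notes that the two projections are local diffeomorphisms matching the pulled-back coframes. Where you genuinely differ is in how surjectivity of $\pi_1$ onto $M$ is obtained. The paper argues by maximality: if $\pi_M(N)\neq M$ it picks a boundary point $p_0$ of $\pi_M(N)$, produces a local integral manifold $N_0$ through $(p_0,\mathbf{1}_{h(p_0)})$ via the local universal property, and right-translates $N_0$ by a suitable arrow so that maximality forces $N\supset R_{g_0}N_0$, a contradiction. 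Your argument instead integrates the $A$-path $a(t)=\sum_i\theta^i(\dot\gamma(t))e_i$ attached to a curve $\gamma$ in $M$ to a path in the $\s$-fiber with prescribed initial arrow (valid in any Lie groupoid integrating $A|_L$, since time-dependent right-invariant vector fields are complete over their base flows, and the base path $h\circ\gamma$ is defined on all of $[0,1]$); the resulting curve is tangent to $D$ and starts in the leaf $M'$, hence stays in it and lifts $\gamma$. This is a clean alternative and in fact gives more than the paper proves: $\pi_1$ acquires the full path-lifting property and is therefore a genuine topological covering of $M$, whereas the paper only establishes that $\pi_M$ is a surjective local diffeomorphism (which is all that its definition of a covering of realizations, as a surjective morphism of realizations, demands). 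You should still record the standard lemma that a local diffeomorphism with the path-lifting property onto a connected manifold is a covering map.

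The one genuine error is the claimed ``symmetric argument'' for $\pi_2:M'\to U$. Lifting a curve $\delta$ in $U\subset\Sigma$ through $\pi_2$ amounts to solving the time-dependent ODE $\dot p(t)=\sum_i\theta_{\G}^i(\dot\delta(t))\,V_i(p(t))$ on $M$, and nothing guarantees its solutions exist on all of $[0,1]$: $M$ carries no completeness hypothesis, so the lift can run off $M$ in finite time; the asymmetry with $\pi_1$ is precisely that the groupoid side is ``complete'' while $M$ is not. Concretely, take the constant-coefficient problem ($X$ a point, $A=\Rr^2$ abelian, $\Sigma=\G=\Rr^2$) and $M=\set{(r,\vartheta): 1<r<2,\ 0<\vartheta<4\pi}$ with the closed coframe $\d(r\cos\vartheta),\ \d(r\sin\vartheta)$; then $M'\cong M$ and $\pi_2$ is (a translate of) the developing map onto the annulus $U=\set{1<|q|<2}$, which is a surjective local diffeomorphism but not a topological covering (the fibre cardinality jumps from $2$ to $1$ along the ray $\vartheta\equiv 0$), and loops in $U$ around the origin fail to lift. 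So $\pi_2$ cannot in general be upgraded to a covering map in the topological sense; what is true, and all the theorem requires under the paper's notion of realization cover, is that $\pi_2$ is a surjective morphism of realizations onto its open image, which holds by construction. With the $\pi_2$ claim weakened accordingly, your proof is correct.
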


Finally, the classifying Lie algebroid can also be used to produce invariants of a geometric structure of finite type, as well as to recover classical results about their symmetry groups. In fact, if the geometric structure is characerized by a coframe $\theta$ on a manifold $M$, with associated classifying algebroid $A_\theta$, then we can view the coframe as a Lie algebroid morphism $\theta: TM \to A_{\theta}$ (which covers the classifying map $h$). As a general principle, the coframe (viewed as a Lie algebroid morphism) pulls back invariants of the classifying Lie algebroid $A_\theta$ to invariants of the coframe. To illustrate this point of view we will introduce a new invariant called the \emph{modular class of a coframe} which is obtained as the pullback of the modular class of its classifying Lie algebroid.

We can summarize this outline of the paper by saying that there are essentially two ways in which we use the existence of a classifying Lie algebroid for a Cartan's realization problem. 

\begin{itemize}
\item For a \emph{class} of geometric structures of finite type whose moduli space (of germs) is finite dimensional, we can set up a Cartan's realization problem whose classifying Lie algebroid gives us information about its local equivalence and classification problems, as well as its globalization and global classification problems.
\item For a \emph{single} geometric structure of finite type, we can set up a Cartan's realization problem whose classifying Lie algebroid describes the symmetries of the geometry, and also provides a recipe for producing invariants of the structure.
\end{itemize}

In a sequel to this paper \cite{FernandesStruchiner}, we will discuss finite type $G$-structures and show how one can associate a classifying Lie algebroid to such a structure. There we will also present several geometric examples related to torsion-free connections on $G$-structures.

Part of the results described in this paper were obtained by the second author in his PhD Thesis \cite{Struchiner}.

\section{Equivalence of coframes and realization problems}%
\label{sec: Equivalence of Coframes}         %

In this section, we will explain how coframes give rise to Cartan realization problems. First we consider the local problem, where we assume the coframe to be fully regular at some point. Then we consider the global problem for coframes which are fully regular at every point. Our exposition may be seen as a Lie algeboid version of the classical approach, which can be found in the monographs of Olver \cite{Olver} and Sternberg \cite{Sternberg}, to which we refer the reader for more details.

\subsection{Local equivalence}                                   %
\label{subsec:coframes:1}                                        %

Let $\{\theta^i\}$ and $\{\bar{\theta}^{i}\}$ be coframes on two $n$-dimensional manifolds $M$ and $\bar{M}$. Throughout this text we will use unbarred letters to denote objects on $M$ and barred letters to denote objects on $\bar{M}$. Any (locally defined) diffeomorphism $\phi:M\to \bar{M}$ such that $\phi^*\bar{\theta}^i=\theta^i$ will be called an \textbf{(local) equivalence}.

As we saw in the Introduction, the structure equations \eqref{structureeqcoframe}
\[
\d \theta^{k}=\sum_{i<j}C_{ij}^{k}(p)  \theta^{i}\wedge\theta^{j}%
\]
and the structure functions $C_{ij}^{k}\in C^{\infty}(M)$ play a crucial role in the study of the equivalence problem. For example, the structure functions of any coframe $\bar{\theta}$ equivalent to $\theta$ must satisfy
\[\bar{C}_{ij}^{k}\left(  \phi(p)  \right)  =C_{ij}^{k}(p)  \text{,}\]
so we may use the structure functions to obtain a set of necessary conditions for solving the equivalence problem. They are examples of \emph{invariant functions}:

\begin{definition}
A function $I\in C^{\infty}(M)$ is called an \textbf{invariant
function of a coframe $\theta=\{\theta^{i}\}$} if for any locally defined
self equivalence $\phi:U\to V$, where $U,V\subset M$ are open sets, one has
\[I\circ\phi=I\text{.}\]
We denote by $\Inv(\theta)\subset C^\infty(M)$ the space of invariant functions of the coframe $\theta$.
\end{definition}

Now, for any function $f\in C^{\infty}(M)$ and coframe $\theta$ one can define
the coframe derivatives $\frac{\partial f}{\partial\theta^{k}}\in C^{\infty}(M)$
as being the coefficients of the differential of $f$ when
expressed in terms of the coframe $\{\theta^{i}\}$,
\[\d f=\sum_{k}\frac{\partial f}{\partial\theta^{k}}\theta^{k}\text{.}\]
Using the fact that $\d\phi^{\ast}=\phi^{\ast}\d$, it follows
that:

\begin{lemma}
If $I\in\Inv(\theta)$ is an invariant function of a coframe $\theta=\{\theta^{i}\}$,
then so is $\frac{\partial I}{\partial\theta^{k}}$ for all $1\leq
k\leq n$. 
\end{lemma}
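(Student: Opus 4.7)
The plan is to start from the defining identity
\[
\d I = \sum_{k} \frac{\partial I}{\partial \theta^{k}}\, \theta^{k}
\]
and pull it back along an arbitrary local self-equivalence $\phi:U\to V$ of the coframe, using the compatibility $\d\phi^{\ast}=\phi^{\ast}\d$ already highlighted in the paragraph preceding the lemma.

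First I would compute $\phi^{\ast}(\d I)$ in two different ways. On one hand, since $I\in\Inv(\theta)$ means $\phi^{\ast}I = I\circ\phi = I$, one gets
\[
\phi^{\ast}(\d I) = \d(\phi^{\ast}I) = \d I = \sum_{k}\frac{\partial I}{\partial \theta^{k}}\,\theta^{k}.
\]
On the other hand, using that $\phi^{\ast}\theta^{k}=\theta^{k}$ by hypothesis, I would expand
\[
\phi^{\ast}(\d I) = \sum_{k} \phi^{\ast}\!\left(\frac{\partial I}{\partial \theta^{k}}\right) \phi^{\ast}\theta^{k} = \sum_{k}\left(\frac{\partial I}{\partial \theta^{k}}\circ\phi\right)\theta^{k}.
\]

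Equating the two expressions and invoking the fact that $\{\theta^{k}\}$ is a coframe, so that $1$-forms decompose uniquely in this basis, one concludes that
\[
\frac{\partial I}{\partial \theta^{k}}\circ\phi = \frac{\partial I}{\partial \theta^{k}} \qquad (1\leq k\leq n),
\]
for every local self-equivalence $\phi$, which is exactly the statement that $\frac{\partial I}{\partial \theta^{k}}\in\Inv(\theta)$.

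There is no real obstacle here: the proof is a one-line application of the naturality of $\d$ together with the uniqueness of the coframe expansion. The only point that requires a moment of attention is ensuring that the pullback is taken over the appropriate open set $U$ (so that the coefficients $\frac{\partial I}{\partial \theta^{k}}$, a priori defined on $M$, can be restricted to $U$ and composed with $\phi$), but this is automatic from the local nature of the invariance condition.
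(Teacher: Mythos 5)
Your proof is correct and is exactly the argument the paper intends: the paper dispatches this lemma with the single remark that it follows from $\d\phi^{\ast}=\phi^{\ast}\d$, and your computation simply writes out that one-line argument in full, using the uniqueness of the expansion of a $1$-form in the coframe basis. No differences in approach and no gaps.
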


Obviously, if $\phi:M\to\bar{M}$ is an equivalence between coframes $\theta=\{\theta^{i}\}$ and $\bar{\theta}=\{\bar{\theta}^{i}\}$, we obtain a bijection $\phi^*:\Inv(\bar{\theta})\to\Inv(\theta)$. So, for example, we have as necessary conditions for equivalence that the structure functions and its coframe derivatives of any order 
\[ C_{ij}^{k},\frac{\partial C_{ij}^{k}}{\partial\theta^{l}}, \ldots, \frac{\partial^{s}C_{ij}^{k}}{\partial\theta^{l_{1}}\cdots\partial\theta^{l_{t}}},\ldots\] 
must correspond under the equivalence. This gives an infinite set of conditions for equivalence. However, observe that if invariants $f_{1},...,f_{l}\in\Inv(\theta)$ sastisfy a functional relationship $f_{l}=H(f_{1},...,f_{l-1})$, then the corresponding elements $\bar{f}_{1},...,\bar{f}_{l}$ in $\Inv(\bar{\theta})$ must satisfy the same functional relation
\begin{equation*}
\bar{f}_{l}=H\left(  \bar{f}_{1},...,\bar{f}_{l-1}\right)
\end{equation*}
for the same function $H$. This shows that we do not need to deal with all the
invariant functions in $\Inv(\theta)$, but only with those that are
functionally independent.

For any subset $\mathcal{C} \subset C^{\infty}(M)$ we define the \textbf{rank of $\mathcal{C}$ at $p\in M$}, denoted by $r_{p}(\mathcal{C})$, to be the dimension of the vector space spanned
by $\{\d_p f : f \in \mathcal{C}\}$. Also, we say that $\mathcal{C}$ is \textbf{regular at $p$} if $r_{p^{\prime}}(\mathcal{C}) = r_{p}(\mathcal{C})$ for all $p^{\prime}$ in a neighborhood of $p$ in $M$. Finally, we say that $\mathcal{C}$ is \textbf{regular}, if it is regular at every point $p \in M$.

\begin{definition}
A coframe $\theta$ is called \textbf{fully regular at $p\in M$} if the set $\Inv(\theta)$ is regular at $p$. It is called \textbf{fully regular} if it is fully regular at every $p \in M$.
\end{definition}

By the implicit function theorem, if $\theta$ is a fully regular coframe in $M$ of rank $d$ at $p\in M$, then we can find invariant functions $\{h_{1},...,h_{d}\}\subset\Inv(\theta)$, independent in a neighborhood of $p$, such that every $f\in\Inv(\theta)$ can be written as
\begin{equation*}
f = H(h_{1},...,h_{d})
\end{equation*}
in a neighborhood of $p$. In particular, we can assume that there is an open set $U\subset M$ containing $p$, where this independent set of invariant functions determine a smooth map:
\[
h : U \subset M \rightarrow \mathbb{R}^{d},\quad h (p) = (h_{1}(p),...,h_{d}(p)),
\]
onto an open set $X \subset \mathbb{R}^{d}$. Note that $C_{ij}^k=C_{ij}^k(h_1,\dots,h_d)$, so the structure functions are smooth functions $C_{ij}^{k} \in C^{\infty}(X)$. Also, differentiating $h_{a}$, we find
\[
\d h_{a}=\sum_{i} (F_{i}^{a}\circ h) ~\theta^{i},
\]
for some smooth functions $F_i^a\in C^{\infty}(X)$, so we obtain $n$ vector fields on $X$:
\[F_i = \sum F^a_i \frac{\partial}{\partial h_a}.\] 
Since the functions $C_{ij}^{k}$ and the vector fields $F_i$ satisfy
\begin{align*}
\d \theta^{k}  &  = \sum_{i<j}C_{ij}^{k}(h) \theta^{i} \wedge
\theta^{j}\\
\d h  &  =  \sum_{i}F_{i}(h)\theta^{i}
\end{align*}
we conclude that a coframe which is fully regular at $p$ determines the initial data of a Cartan's Realization Problem (Problem \ref{realiz:probl}). 

\begin{remark}
\label{rem:uniqueness}
The data depends on the choice of the functions $\{h_{1},...,h_{d}\}\subset\Inv(\theta)$. However, if $\{h'_{1},...,h'_{d}\}\subset\Inv(\theta)$ is a different choice, defining an open set $X'$, giving rise to data ${C'}_{ij}^{k}\in C^{\infty}(X')$ and $F'_i\in\X(X')$, then there is a diffeomorphism $\phi:X\to X'$ defined from a neighborhood of $h(p)$ to a neighborhood of $h'(p)$, which makes the diagram commutative:
\[\xymatrix{&U \ar[dl]_{h}\ar[dr]^{h'}& \\ 
X\ar[rr]_{\phi}& & X'}\]
and is such that $\phi^*{C'}_{ij}^{k}=C_{ij}^{k}$ and $\phi_*F_i=F'_i$.
\end{remark}

\subsection{Global equivalence}                                  %
\label{subsec:coframes:2}                                        %

So far, we have only considered a neighborhood of a point where the coframe is fully regular. We consider now the case where $\theta$ is a fully regular coframe at every point of $M$. 

\begin{definition}
We will say that two points $p$ and $q$ of $M$ are \textbf{locally formally equivalent} if there exist neighborhoods $U_p$ and $U_q$ of $p$ and $q$ in $M$ and a diffeomorphism $\phi: U_p \to U_q$ such that $\phi(p) = q$ and 
\begin{equation}
\label{formally equivalent}
f (\phi(p^{\prime})) = f(p^{\prime}), \quad \text{ for all } p^{\prime} \in U_p\text{ and }  f\in \Inv(\theta).
\end{equation}
\end{definition}

\begin{remark}
\label{rem:formal:local:equiv}
We note that $p$ and $q$ are locally formally equivalent if and only if $f(p)=f(q)$ for all $f\in\Inv(\theta)$. The crucial point is that since $\Inv(\theta)$ is closed under the operation of taking coframe derivatives, it follows that if $f(p) = f(q)$ for all $f \in \Inv(\theta)$, and $f_1, \ldots, f_d \in \Inv(\theta)$ are functionally independent in a neighborhood of $p$, then they are also functionally independent in some neighborhood of $q$. We shall see later, in Proposition \ref{prop: formal equivalence}, that local formal equivalence and local equivalence actually coincide.
\end{remark}

With this definition, we have:

\begin{prop}
Let $\theta$ be a fully regular coframe on $M$ of rank $d$ and denote by $\sim$ the local formal equivalence relation. The quotient:
\[X_\theta := M{/}\sim\]
has a natural structure of a smooth manifold of dimension $d$.
\end{prop}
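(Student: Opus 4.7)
The plan is to build a smooth atlas on $X_\theta$ directly from the local invariants supplied by full regularity, then check that the quotient topology makes $X_\theta$ into a Hausdorff, second countable manifold. At each $p \in M$, after shrinking, one picks invariants $h_1^p, \ldots, h_d^p \in \Inv(\theta)$ that assemble into a surjective submersion $h^p : U_p \to X_p \subset \mathbb{R}^d$ onto an open set, with every element of $\Inv(\theta)$ a smooth function of $h_1^p, \ldots, h_d^p$ on $U_p$. The candidate charts of $X_\theta$ will be the maps induced on the quotient by the $h^p$.

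The first key step is the local characterization: for $p', p'' \in U_p$ one has $p' \sim p''$ iff $h^p(p') = h^p(p'')$. The forward implication is immediate since each $h_i^p$ is invariant. For the reverse, Remark \ref{rem:formal:local:equiv} reduces equivalence to the coincidence of \emph{all} invariant values, and each global invariant agrees on $U_p$ with some smooth function of $h_1^p, \ldots, h_d^p$, so agreement of the $h_i^p$ forces agreement of every invariant. Consequently $h^p$ factors through the quotient map $\pi : M \to X_\theta$ as a bijection $\bar{h}^p : \pi(U_p) \to X_p$.

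For smoothness of chart transitions, consider two charts whose images in $X_\theta$ overlap. If $U_p \cap U_q \neq \emptyset$ in $M$, then on the intersection both $h^p$ and $h^q$ are $d$-tuples of functionally independent invariants, so $h^q = G \circ h^p$ for a local diffeomorphism $G$ of $\mathbb{R}^d$ by the inverse function theorem, and the same $G$ realizes the chart transition. If $U_p$ and $U_q$ are disjoint in $M$ but their images in $X_\theta$ meet, pick $p' \in U_p$ and $p'' \in U_q$ with $p' \sim p''$; local formal equivalence supplies a diffeomorphism $\phi : V_{p'} \to V_{p''}$ preserving all invariants, and the same argument applied to $h^p$ and $h^q \circ \phi$ produces a smooth transition.

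Finally, one topologizes $X_\theta$ as a quotient. The decisive point is that $\pi$ is an \emph{open} map: if $r \sim p'$ with $p' \in U$ open in $M$, the diffeomorphism provided by local formal equivalence moves a neighborhood of $r$ into $U$, so the saturation of $U$ contains a neighborhood of $r$. Openness of $\pi$ makes each $\bar{h}^p$ a homeomorphism, transfers second countability from $M$ to $X_\theta$, and shows the charts cover. Hausdorffness follows because two inequivalent points are separated by some $f \in \Inv(\theta)$, and the preimages under $f$ of disjoint neighborhoods of $f(p), f(q) \in \mathbb{R}$ give disjoint saturated open neighborhoods. The main obstacle is really Remark \ref{rem:formal:local:equiv} itself: both the injectivity of $\bar{h}^p$ and the openness of $\pi$ depend on upgrading the mere equality of invariant values to the existence of an honest local diffeomorphism preserving all of $\Inv(\theta)$, which is the nontrivial content of that remark.
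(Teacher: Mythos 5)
Your proof is correct and follows essentially the same route as the paper's: charts built from local systems of $d$ functionally independent invariants, injectivity of the induced maps and openness of the projection both reduced to Remark \ref{rem:formal:local:equiv}, and smooth transitions obtained from the functional dependence of one system of invariants on another. The only minor divergence is the Hausdorff step, where you separate two inequivalent classes by the saturated open preimages of a single invariant function, whereas the paper argues via sequences $p_n\to p$, $q_n\to q$ with $p_n\sim q_n$ and continuity of invariants; both rest on the same characterization of $\sim$ as equality of all invariant values.
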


\begin{proof}
On the the orbit space $X_\theta := M{/}\sim$ we consider the quotient topology and we denote by $\kappa_\theta:M\to X_\theta$ the projection. Also, if $U$ is an open subset of $M$, then we will denote by $\Inv_U(\theta)$ the restriction of $\Inv(\theta)$ to $U$. 

We start by choosing a cover of $M$ by a family of open sets $\set{U_{\lambda}}_{\lambda \in \Lambda}$ such that for each $\lambda \in \Lambda$ there exist  elements $h^{\lambda}_1, \ldots , h^{\lambda}_d \in \Inv(\theta)$ whose restriction to $U_{\lambda}$ are functionally independent and generate $\Inv_{U_{\lambda}}(\theta)$. Thus, on each $U_{\lambda}$ we obtain a smooth \emph{open} map $h^{\lambda}: U_{\lambda} \to \Rr^d$ defined by 
\[p \mapsto (h^{\lambda}_1(p), \ldots, h^{\lambda}_d(p)).\]
The image of this map is an open subset $h^{\lambda}(U_{\lambda}) \subset \Rr^d$, and we have an induced  map on the orbit space $\bar{h}^{\lambda}: X_{\lambda} \to \Rr^d$, where $X_{\lambda}:=\kappa_\theta(U_{\lambda})\subset X_\theta$.

\begin{lemma}
\label{lem:aux:1}
The sets $X_\lambda$ form an open cover of $X_\theta$ and each $\bar{h}^{\lambda}: X_{\lambda} \to \Rr^d$ is a homeomorphism onto the open set $h^{\lambda}(U_{\lambda})\subset \Rr^d$.
\end{lemma}

\begin{proof}[Proof of Lemma \ref{lem:aux:1}]
The set $X_\lambda=\kappa_\theta(U_\lambda)$ is open if and only if the saturation of $U_\lambda$, defined by
\[ \widetilde{U}_\lambda:=\{q\in M:q\sim p\text{ for some }p\in U_\lambda\},\]
is open in $M$. This follows because if $q\in \widetilde{U}_\lambda$ then there exists $p\in U_\lambda$ and a diffeomorphism $\phi: U_p \to U_q$, defined on neighborhoods of $p$ and $q$, such that \eqref{formally equivalent} holds. Then $\phi(U_p\cap U_\lambda)$ is an open set containing $q$, which is contained in $\widetilde{U}_\lambda$. We conclude that the saturation $\widetilde{U}_\lambda$ is open, therefore $X_\lambda$ is also open.

The induced map $\bar{h}^{\lambda}: X_{\lambda} \to \Rr^d$ is continuous (by definition of the quotient topology), open (since  $h^{\lambda}$ is open) and gives a 1:1 map onto the open set $h^{\lambda}(U_{\lambda})\subset \Rr^d$. Hence,   $\bar{h}^{\lambda}: X_{\lambda} \to h^{\lambda}(U_{\lambda})$ is a homeomorphism.
\end{proof}

Now observe that if there exists a diffeomorphism defined on an open subset $W_{\lambda}\subset U_{\lambda}$ with values in another open subset $W_{\mu} \subset U_{\mu}$,
\[\phi: W_{\lambda} \to W_{\mu},\]
and such that 
\[ f (\phi(p)) = f(p), \quad \text{ for all } p \in W_{\lambda}, \text{ and } f\in \Inv(\theta), \]
(for example if $U_{\lambda}\cap U_{\mu} \neq \emptyset$, and $W_{\lambda} = U_{\lambda} \cap U_{\mu} = W_{\mu}$) then the restriction of each $h^{\mu}_i$ to $W_{\mu}$ can be written as a smooth function of the (restriction of the)  functions $h^{\lambda}_1, \ldots h^{\lambda}_d$, i.e.,
\[h^{\mu}_i = H_{\lambda \mu}^i(h^{\lambda}_1, \ldots, h^{\lambda}_d).\]
Since both sets $\set{h^{\lambda}_i}$ and $\set{h^{\mu}_i}$ are functionally independent, we obtain a diffeomorphism
\[H_{\lambda \mu} : h^{\lambda}(W_{\lambda})\to h^{\mu}(W_{\mu}).\] 
Therefore, at the level of the orbit space, we obtain the commutative diagram:
\[ 
\xymatrix{&X_\lambda\cap X_\mu \ar[dl]_{\bar{h}^{\lambda}}\ar[dr]^{\bar{h}^{\mu}}& \\ 
\bar{h}^{\lambda}(X_\lambda\cap X_\mu)\ar[rr]_{H_{\lambda \mu}}& & \bar{h}^{\mu}(X_\lambda\cap X_\mu)}
\]
Therefore, the family $\{(X_\lambda,\bar{h}^{\lambda})\}$ gives an atlas for $X_\theta$, whose transition functions are precisely the maps $H_{\lambda \mu}$, so we conclude that $X_\theta$ is a smooth manifold, possibly non-Hausdorff.

We claim that $X_\theta$ is Hausdorff. Let $x,y\in X_\theta$ and suppose that any two neighborhoods of $x$ and $y$ intersect. To prove the claim we must show that $x=y$. For this, pick $p,q\in M$ such that $\kappa_\theta(p)=x$ and $\kappa_\theta(q)=y$. Then, by our assumption, any neighborhoods of $p$ and $q$ contain points which are in the same equivalence class. Therefore, we can choose sequences $p_n\to p$ and $q_n\to q$ such that $p_n\sim q_n$. It follows that for any $f\in \Inv(\theta)$:
\[ f(p)=\lim f(p_n)=\lim f(q_n)=f(q).\]
This implies that $p\sim q$ (see Remark \ref{rem:formal:local:equiv}) so $x=y$.
\end{proof}

We will see that the manifold $X_\theta$ plays a crucial role in the various equivalence and classification problems associated with a fully regular coframe $\theta$. Hence we suggest the following

\begin{definition}
The manifold $X_\theta$ is called the \textbf{classifying manifold} of the coframe and the natural map $\kappa_\theta: M \to X_\theta$ is called the \textbf{classifying map} of the coframe.
\end{definition}

\begin{remark}
For a arbitrary coframe $\theta$ one can still define the (singular) classifying space $X_\theta:=M/\sim$, where $\sim$ denotes local equivalence (which, now, may differ from formal local equivalence). The space $X_\theta$ is smooth on the dense open set consisting of points where the coframe is fully regular. It is an interesting problem to study the kind of singularities that may arise in this space.
\end{remark}

If $\theta=\{\theta^i\}$ is a fully regular coframe in $M$ of rank $d$, with classifying manifold $X_\theta$, by Remark \ref{rem:uniqueness}, the structure functions $C_{ij}^{k}$ can be seen as smooth functions in $X_\theta$, while the locally defined vector fields $F_i =\sum_a F_i^a\frac{\partial}{\partial h^\lambda_a}$ glue to globally defined vector fields $F_i\in\X(X_\theta)$. We conclude that:

\begin{prop}
If $\theta$ is a fully regular coframe on a manifold $M$, then it gives rise to a Cartan's realization problem with initial data $(n, X_\theta, C^k_{ij}, F_i)$ for which $(M, \theta, \kappa_\theta)$ is a solution.
\end{prop}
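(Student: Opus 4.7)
The plan is to upgrade the local construction from Section 2.1 to a global one by using the smooth atlas on $X_\theta$ built in the previous proposition, and to show that the locally defined structure functions and vector fields glue to globally defined objects on $X_\theta$.

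First, I would choose the same open cover $\{U_\lambda\}_{\lambda \in \Lambda}$ of $M$ used in the construction of the atlas on $X_\theta$, together with the functionally independent invariants $h^\lambda_1, \ldots, h^\lambda_d \in \Inv(\theta)$ generating $\Inv_{U_\lambda}(\theta)$. The discussion in Section 2.1 then produces, on each $U_\lambda$, smooth functions ${}^\lambda C^k_{ij} \in C^\infty(h^\lambda(U_\lambda))$ and vector fields ${}^\lambda F_i \in \X(h^\lambda(U_\lambda))$ satisfying the local analogues of \eqref{eq: dtheta} and \eqref{eq: dh} with classifying map $h^\lambda : U_\lambda \to h^\lambda(U_\lambda) \subset \Rr^d$.

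Next, I would check compatibility on overlaps. Whenever $X_\lambda \cap X_\mu \neq \emptyset$, the transition diffeomorphism $H_{\lambda\mu}$ of the atlas on $X_\theta$ (constructed in the previous proposition) coincides precisely with the diffeomorphism of Remark \ref{rem:uniqueness}, since both arise from expressing one set of functionally independent invariants in terms of another on a common open set. Remark \ref{rem:uniqueness} then gives $H_{\lambda\mu}^* {}^\mu C^k_{ij} = {}^\lambda C^k_{ij}$ and $(H_{\lambda\mu})_* {}^\lambda F_i = {}^\mu F_i$. Transported through the charts $\bar{h}^\lambda : X_\lambda \to h^\lambda(U_\lambda)$, this says exactly that the locally defined ${}^\lambda C^k_{ij}$ agree on overlaps as functions on $X_\theta$, and the ${}^\lambda F_i$ agree on overlaps as vector fields on $X_\theta$. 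Hence they glue to global functions $C^k_{ij} \in C^\infty(X_\theta)$ and global vector fields $F_i \in \X(X_\theta)$.

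Finally, I would verify that $(M, \theta, \kappa_\theta)$ solves the resulting Cartan realization problem $(n, X_\theta, C^k_{ij}, F_i)$. In each chart $(X_\lambda, \bar{h}^\lambda)$, the classifying map $\kappa_\theta$ is represented by $h^\lambda$, so equations \eqref{eq: dtheta} and \eqref{eq: dh} reduce, on $U_\lambda$, to the local identities already established in Section 2.1. Since the $U_\lambda$ cover $M$, both equations hold globally. The main (mild) obstacle is the bookkeeping involved in checking that the gluing is consistent, and for this Remark \ref{rem:uniqueness} does essentially all the work; no genuine new difficulty arises.
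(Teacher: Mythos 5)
Your proposal is correct and follows essentially the same route as the paper: the paper's own justification is the one-paragraph remark preceding the proposition, which invokes the local construction of Section 2.1 together with Remark \ref{rem:uniqueness} to glue the locally defined $C^k_{ij}$ and $F_i$ over the atlas $\{(X_\lambda,\bar{h}^\lambda)\}$. You have merely spelled out the overlap compatibility and the final chartwise verification of \eqref{eq: dtheta} and \eqref{eq: dh} in more detail.
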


\section{The classifying algebroid}                       %
\label{sec:ClassAlgbrd}                                   %

In the previous section we have seen that the equivalence problem for coframes leads naturally to a Cartan realization problem. We now show that if such a problem admits solutions, then it is associated with a Lie algebroid, called the \emph{classifying algebroid} of the realization problem.

\subsection{Lie algebroids and Cartan's Problem}                 %
\label{subsec:Algbrd}                                            %

Recall that a Lie algebroid $A\to X$ is a vector bundle equipped with a Lie bracket on its space of sections $\Gamma(A)$ and a bundle map $\sharp:A\to TX$, called the anchor, such that the following Leibniz type identity holds:
\[ 
[\alpha,f\beta]=f[\alpha,\beta]+(\Lie_{\sharp(\alpha)}f)\beta,\quad (\alpha,\beta\in\Gamma(A),\ f\in C^\infty(X)).
\]
We refer to \cite{CannasWeinstein,CrainicFernandes:lecture} for details on Lie algebroids. If the vector bundle $A$ is trivial, so that we have a basis of global sections $\{\al_1,\dots,\al_n\}$, then we can express the bracket by:
\begin{equation}
\label{eq:bracket:algbrd}
[\al_i,\al_j]=\sum_{k=1}^n C_{ij}^k\al_k,
\end{equation}
for some structure functions $C_{ij}^k\in C^\infty(U)$. The anchor, in turn, is characterized by the vector fields $F_i\in\X(X)$ given by:
\begin{equation}
\label{eq:anchor:algbrd} 
F_i:=\sharp(\al_i), \quad (i=1,\dots,n).
\end{equation}
The Leibniz identity and the Jacobi identity for the bracket, lead to the following set of equations:
\begin{align}
\label{eq:struct:algbrd:1}
\left[ F_i, F_j \right] & = \sum_k C^k_{ij}F_k\\
\label{eq:struct:algbrd:2}
\Lie_{F_j}C^i_{kl} + \Lie_{F_k} C^i_{lj} + \Lie_{F_l} C^i_{jk}&  =  
\sum_m( C_{mj}^{i}C_{kl}^{m}+C_{mk}^{i}C_{lj}^{m}+C_{ml}^{i}C_{jk})
\end{align} 
Conversely, if one is given functions $C_{ij}^k\in C^\infty(X)$ and vector fields $F_i\in\X(X)$ satisfying these equations, then one obatins a Lie algebroid structure on the trivial vector bundle $A:=X\times\Rr^n$ with Lie bracket \eqref{eq:bracket:algbrd} and anchor \eqref{eq:anchor:algbrd}.

\begin{remark}
If one also chooses local coordinates $(x_1, \ldots, x_d)$ on a domain $U\subset X$, so that the vector fields $F_i$ are expressed as:
\[ F_i=\sum_a F^a_i \frac{\partial}{\partial x_a},\]
then we obtain the complete set of \emph{structure functions} $C_{ij}^k,F_i^a\in C^\infty(U)$ of the Lie algebroid $A$. Equations \eqref{eq:struct:algbrd:1} and \eqref{eq:struct:algbrd:2} become:
\begin{align*}
\sum\limits_{b=1}^{d}\left(  F_{i}^{b}\frac{\partial F_{j}^{a}}{\partial
x_{b}}-F_{j}^{b}\frac{\partial F_{i}^{a}}{\partial x_{b}}\right)
&=\sum\limits_{l=1}^{n}C_{ij}^{l}F_{l}^{a}\\
\sum\limits_{b=1}^{d}\left(  F_{j}^{b}\frac{\partial C_{kl}^{i}}{\partial
x_{b}}+F_{k}^{b}\frac{\partial C_{lj}^{i}}{\partial x_{b}}+F_{l}^{b}%
\frac{\partial C_{jk}^{i}}{\partial x_{b}}\right) &= 
\sum\limits_{m=1}
^{n}\left(  C_{mj}^{i}C_{kl}^{m}+C_{mk}^{i}C_{lj}^{m}+C_{ml}^{i}C_{jk}%
^{m}\right)
\end{align*}
We call this set of equations, or its shorter form \eqref{eq:struct:algbrd:1} and \eqref{eq:struct:algbrd:2}, the \textbf{structure equations} of the Lie algebroid.
\end{remark}

We can now give the necessary conditions for solving the realization problem. They are simply a consequence of the fact that $\d ^{2}=0$:

\begin{prop}
\label{NecessaryRealization} 
Let $(n,X,C_{ij}^{k},F_i)$ be the initial data of a Cartan's realization problem. If for every $x_0\in X$ there is a realization $(M,\theta,h)$ with $h(m_0)=x_0$, for some $m_0\in M$, then the $-C_{ij}^{k}\in C^{\infty}(X)$ and $F_i \in \X(X)$ determine the structure of a Lie algebroid $A$ over $X$.
\end{prop}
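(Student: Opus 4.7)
The plan is to extract the required identities by computing $d^2=0$ on both sides of the defining equations of a realization. Concretely, fix an arbitrary point $x_0\in X$ and, by the standing hypothesis, pick a realization $(M,\theta,h)$ together with $m_0\in M$ such that $h(m_0)=x_0$. Because $\{\theta^i\}$ is a coframe, the $k$-forms $\theta^{i_1}\wedge\cdots\wedge\theta^{i_k}$ with $i_1<\cdots<i_k$ are pointwise linearly independent at $m_0$, which will allow us to equate coefficients of wedge expansions evaluated at $m_0$. Since $x_0$ is arbitrary, any identity obtained this way then holds identically on $X$.

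First I would choose local coordinates $(x^a)$ on $X$ around $x_0$, write $F_i=\sum_a F_i^a\,\partial/\partial x^a$, and exploit the key computational lemma that for any $g\in C^\infty(X)$,
\[
d(g\circ h)=h^{*}(dg)=\sum_{i}(\Lie_{F_i}g\circ h)\,\theta^{i},
\]
which follows directly from \eqref{eq: dh}. Applying this to $g=x^a$ gives $dh^a=\sum_i (F_i^a\circ h)\theta^i$. Then the condition $d^2 h^a=0$ expands to
\[
\sum_{i,l}(\Lie_{F_l}F_i^a\circ h)\,\theta^l\wedge\theta^i
+\sum_i (F_i^a\circ h)\sum_{p<q}(C_{pq}^i\circ h)\,\theta^p\wedge\theta^q=0.
\]
Collecting coefficients of $\theta^l\wedge\theta^i$ with $l<i$ and evaluating at $m_0$, one obtains
\[
[F_l,F_i]^a(x_0)=-\sum_k C_{li}^k(x_0)\,F_k^a(x_0),
\]
which is precisely the structure equation \eqref{eq:struct:algbrd:1} with $-C_{ij}^k$ in the role of structure functions. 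Since $x_0$ was arbitrary, this identity holds on all of $X$, and in particular the span of the $F_i$'s is closed under the bracket, giving a well-defined anchor.

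Second, I would repeat the argument for $d^2\theta^k=0$. Differentiating \eqref{eq: dtheta} and applying the same lemma to $g=C_{ij}^k$, one finds
\[
0=\sum_{l;\,i<j}(\Lie_{F_l}C_{ij}^k\circ h)\,\theta^l\wedge\theta^i\wedge\theta^j
+\sum_{i<j}(C_{ij}^k\circ h)\,(d\theta^i\wedge\theta^j-\theta^i\wedge d\theta^j),
\]
and expanding $d\theta^i$ and $d\theta^j$ using \eqref{eq: dtheta} again produces an identity whose terms are all multiples of $\theta^a\wedge\theta^b\wedge\theta^c$ with coefficients that are functions of $x$ pulled back by $h$. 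Equating to zero the coefficient of each $\theta^l\wedge\theta^j\wedge\theta^k$ at $m_0$, and antisymmetrizing in the three lower indices, yields exactly the cyclic identity \eqref{eq:struct:algbrd:2} at $x_0$ (up to the overall sign that turns $C_{ij}^k$ into $-C_{ij}^k$ on both sides, which is why the signs work out coherently). Once again, arbitrariness of $x_0$ promotes this to an identity on $X$.

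Finally, having verified both \eqref{eq:struct:algbrd:1} and \eqref{eq:struct:algbrd:2} on all of $X$ with structure functions $-C_{ij}^k$ and anchor vector fields $F_i$, the general construction recalled before the proposition produces a Lie algebroid structure on the trivial bundle $A=X\times\Rr^n$ with bracket $[\alpha_i,\alpha_j]=-\sum_k C_{ij}^k\alpha_k$ and anchor $\sharp(\alpha_i)=F_i$. The main obstacle I anticipate is purely bookkeeping: correctly tracking the antisymmetrizations in $d^2\theta^k=0$ so that the purely algebraic part groups into the cyclic triple sums of \eqref{eq:struct:algbrd:2}, and verifying that the signs there match those produced by the convention $-C_{ij}^k$ coming from the $d^2h=0$ computation. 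Everything else is linear algebra on the coframe.
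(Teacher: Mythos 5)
Your proof is correct and follows essentially the same route as the paper's: differentiate equations \eqref{eq: dtheta} and \eqref{eq: dh}, impose $\d^2=0$, and read off the structure equations \eqref{eq:struct:algbrd:1} and \eqref{eq:struct:algbrd:2} for the bracket $[\al_i,\al_j]=-\sum_k C^k_{ij}\al_k$ and anchor $\sharp(\al_i)=F_i$. The only difference is that you spell out the coefficient-extraction via linear independence of the wedge products and the use of the hypothesis that every $x_0\in X$ is hit by some realization (needed to promote the pointwise identities to all of $X$), details the paper leaves implicit.
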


\begin{proof}
Differentiating equations \eqref{eq: dtheta} and \eqref{eq: dh} and using $\d^2=0$, we obtain:
\begin{align*}
\Lie_{F_j}C^i_{kl} + \Lie_{F_k} C^i_{lj} + \Lie_{F_l} C^i_{jk}&= - \sum_m( C_{mj}^{i}C_{kl}^{m}+C_{mk}^{i}C_{lj}^{m}+C_{ml}^{i}C_{jk})\\
\left[ F_i, F_j \right] &= - \sum_k C^k_{ij}F_k.
\end{align*}
These are just the structure equations \eqref{eq:struct:algbrd:1} and \eqref{eq:struct:algbrd:2} for the Lie algebroid defined by the trivial vector bundle $A\to X$ with fiber $\Rr^n$, with Lie bracket and anchor given by:
\begin{align*}
\sharp(\al_i)&:= F_i,\\
[\al_i, \al_j]&:= -\sum_k C^k_{ij}\al_k,
\end{align*}
where $\{\al_1, \ldots, \al_n\}$ is the canonical basis of sections of $A$.
\end{proof}

Using this proposition we introduce the following basic concept:

\begin{definition}
The Lie algebroid constructed out of the initial data of a Cartan's realization problem will be called the \textbf{classifying algebroid} of the problem.
\end{definition}

We will see later that the fact that the initial data of a Cartan realization problem determines a Lie algebroid is also sufficient for the existence of solutions, and that this Lie algebroid leads to solutions of the various classifications problems associated to coframes, which justifies its name.

\begin{remark}
\label{rem:transitive}
Notice that if we start with a fixed coframe $\theta$ on a manifold $M$, then we have an associated Cartan realization problem. The corresponding Lie algebroid will be called the \textbf{classifying algebroid of the coframe} and denoted $A_\theta\to X_\theta$. If the coframe is also the solution of another realization problem, with associated algebroid $A$, we will see later (Corollary \ref{cor:subalgbrd}) the precise relationship between $A_\theta$ and $A$. Also, note that since the vector fields $F_1,\dots, F_n$ generate at each point the tangent space to $X_\theta$, the classifying algebroid of the coframe $A_\theta$ is always transitive, while this does not need to hold with a general classifying Lie algebroid $A$ of a Cartan realization problem.
\end{remark}

\subsection{Realizations and Lie algebroid morphisms}            %
\label{subsec:morphisms}                                         %

Let $A\to X$ be the classifying Lie algebroid of a Cartan's problem. Each realization $(M,\theta,h)$ of the problem determines a bundle map:
\begin{equation}
\label{eq:alg:morphism}
\xymatrix{ 
TM\ar[r]^{\theta}\ar[d]& A\ar[d]\\
M\ar[r]_{h}& X}
\end{equation}
by setting:
\begin{align*} 
TM&\longrightarrow A=X\times\Rr^n,\\
v&\longmapsto (h(p(v)),(\theta^1(v),\dots,\theta^n(v))),
\end{align*}
where $p:TM\to M$ denotes the projection. This bundle map is a fiberwise isomorphism.

We have the following basic result:

\begin{prop}
\label{prop:morphisms}
Let $A\to X$ be the classifying Lie algebroid of a Cartan's problem. The realizations of this problem are in 1:1 correspondence with bundle maps \eqref{eq:alg:morphism} which are Lie algebroid morphisms and fiberwise isomorphisms.
\end{prop}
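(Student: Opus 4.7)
The plan is to use the dual (Chevalley--Eilenberg) characterization of Lie algebroid morphisms: a linear bundle map $\Phi\colon A'\to A$ covering $\phi\colon M'\to M$ is a Lie algebroid morphism if and only if the algebra pullback $\Phi^*\colon\Omega^\bullet(A)\to\Omega^\bullet(A')$ intertwines the Chevalley--Eilenberg differentials $d_A$ and $d_{A'}$. Since both differentials are derivations, it suffices to verify this compatibility on a generating set of $\Omega^\bullet(A)$, namely on $C^\infty(X)$ in degree zero and on $\Gamma(A^*)$ in degree one.

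First I would fix the canonical basis of sections $\{\al_1,\ldots,\al_n\}$ of $A=X\times\Rr^n$, and the dual basis $\{\al^1,\ldots,\al^n\}$ of $A^*$. Using the defining formula
\[
d_A\xi(\be,\ga)=\sharp(\be)\xi(\ga)-\sharp(\ga)\xi(\be)-\xi([\be,\ga]),
\]
together with the bracket $[\al_i,\al_j]=-\sum_k C_{ij}^k\al_k$ and the anchor $\sharp(\al_i)=F_i$ coming from Proposition~\ref{NecessaryRealization}, a direct computation gives
\[
d_A f=\sum_i (\Lie_{F_i}f)\,\al^i\ \ (f\in C^\infty(X)),\qquad d_A\al^k=\sum_{i<j}C_{ij}^k\,\al^i\wedge\al^j.
\]

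For the forward direction, given a realization $(M,\theta,h)$, the associated bundle map $\theta\colon TM\to A$, $v\mapsto(h(p(v)),(\theta^1(v),\ldots,\theta^n(v)))$, is automatically a fiberwise isomorphism since $\{\theta^i\}$ is a coframe. On generators its pullback reads $\theta^*f=f\circ h$ and $\theta^*\al^k=\theta^k$. The compatibility $d(\theta^*f)=\theta^*(d_A f)$ then unwinds, upon evaluation on a tangent vector, to $dh=\sum_i(F_i\circ h)\theta^i$, which is exactly \eqref{eq: dh}; and $d(\theta^*\al^k)=\theta^*(d_A\al^k)$ is precisely \eqref{eq: dtheta}. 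Hence $\theta$ is a Lie algebroid morphism.

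For the converse, any linear bundle map $\Psi\colon TM\to A$ covering some smooth $h\colon M\to X$ is determined by $1$-forms $\theta^i\in\Omega^1(M)$ via the same formula, and the fiberwise isomorphism hypothesis forces $\{\theta^i\}$ to be a coframe on $M$. Running the generator calculation backwards shows that the morphism condition is equivalent to the two realization equations \eqref{eq: dtheta} and \eqref{eq: dh}, so $(M,\theta,h)$ is a realization, and the two constructions are manifestly mutually inverse. The main subtlety to keep track of is the sign convention that places $-C_{ij}^k$ in the bracket of $A$ so that $d_A\al^k$ comes out with $+C_{ij}^k$, matching the structure equations of the coframe; with this correctly in place, the argument reduces to the two displayed computations above.
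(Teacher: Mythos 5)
Your argument is correct, but it takes a genuinely different route from the paper. The paper proves Proposition \ref{prop:morphisms} by invoking the characterization of Lie algebroid morphisms via the \emph{generalized Maurer--Cartan equation}: after checking that commuting with anchors is equivalent to \eqref{eq: dh}, it fixes the trivial connection $\nabla$ on $A=X\times\Rr^n$ and computes $\d_\nabla\theta+\frac{1}{2}[\theta,\theta]_\nabla$ on the frame $\partial_{\theta^i}$ dual to the coframe, showing this vanishes exactly when \eqref{eq: dtheta} holds. You instead use the dual (Chevalley--Eilenberg) characterization --- a bundle map is a morphism iff its pullback is a chain map --- and verify the chain condition only on the algebra generators $C^\infty(X)$ and the $\al^k$, which is legitimate since $\theta^*$ is an algebra map and both differentials are derivations; your two displayed identities $d_A f=\sum_i(\Lie_{F_i}f)\al^i$ and $d_A\al^k=\sum_{i<j}C^k_{ij}\al^i\wedge\al^j$ are correct given the bracket $[\al_i,\al_j]=-\sum_k C^k_{ij}\al_k$, and they unwind precisely to \eqref{eq: dh} and \eqref{eq: dtheta}. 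Your route is arguably cleaner and more self-contained: it needs no auxiliary connection, no torsion bracket, and treats both realization equations uniformly as the chain condition in degrees $0$ and $1$. What the paper's choice buys is that the Maurer--Cartan formalism introduced here is exactly the machinery reused immediately afterwards (the universal property of the Maurer--Cartan form, the graph/distribution argument in Theorem \ref{Universal}), so the connection-based computation is doing double duty as setup for the rest of Section \ref{sec:Maurer-Cartan}.
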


Before we prove this result, let us recall some basic facts about Lie algebroid morphisms (see,e.g., \cite{CrainicFernandes:lecture}). A Lie algebroid morphism is a bundle map:
\[
\xymatrix{ 
A\ar[r]^{\mathcal{H}}\ar[d]& B\ar[d]\\
X\ar[r]_{h}& Y}
\]
which commutes with anchors:
\[ \d h\circ \sharp_A=\sharp_B\circ\mathcal{H},\]
and which preserves brackets. Since we are dealing with a bundle map over different basis, to express this condition is somewhat cumbersome (see \cite{CrainicFernandes:lecture}). For us, it will be convenient to use the generalized Maurer-Cartan equation, which we now recall. 

First we introduce an arbitrary connection $\nabla$ on the vector bundle $B\rightarrow Y$. Then for \emph{any} bundle maps $\mathcal{H},\mathcal{G}:A\to B$ which commute with the anchors, we define:
\begin{enumerate}
\item[(a)] The differential operator $\d_{\nabla}\mathcal{H}$: for each $\al_1,\al_2\in\Gamma(A)$, $\d_{\nabla}\mathcal{H}(\al_{1},\al_{2})$ is the section of the pull-back bundle $\Gamma(h^{\ast}B)$ given by(\footnote{Note that, in general, $\d_{\nabla}^{2}\neq 0$ so that $\d_{\nabla}$ is not a differential.}):
\begin{equation}
\label{eq:differential}
\d_{\nabla}\mathcal{H}(\al_{1},\al_{2}):= \nabla_{\sharp_A\al_{1}}\mathcal{H}(\al_{2})
- \nabla_{\sharp_A\al_{2}}\mathcal{H}(\alpha_{1})-\mathcal{H}([\alpha_{1},\alpha_{2}])
\end{equation}
where we use the same letter $\nabla$ for the pullback connection on $h^{\ast}B$. 
\item[(b)] The bracket $[\mathcal{H},\mathcal{G}]_{\nabla}$: for each $\al_1,\al_2\in\Gamma(A)$, $[\mathcal{H},\mathcal{G}]_{\nabla}(\al_1,\al_2)$ is the section of the pull-back bundle $\Gamma(h^{\ast}B)$ given by:
\begin{equation}
\label{eq:bracket}
[\mathcal{H},\mathcal{G}]_{\nabla}(\al_1,\al_2)=-(T_{\nabla}(\mathcal{H}(\al_1),\mathcal{G}(\al_2)) +
T_{\nabla}(\mathcal{G}(\al_1),\mathcal{H}(\al_2))),
\end{equation}
where $T_{\nabla}$ is the torsion of the pullback connection on $h^*B$.
\end{enumerate}

Now we define the \textbf{generalized Maurer-Cartan equation} for a bundle map of Lie algebroids $\mathcal{H}:A\to B$, which commutes with the anchors, to be:
\begin{equation}
\label{eq:maurer:cartan}
\d_{\nabla}\mathcal{H}+\frac{1}{2}[\mathcal{H},\mathcal{H}]_{\nabla} = 0.
\end{equation}

The generalized Maurer-Cartan equation is independent of the choice of connection $\nabla$. A fundamental fact (see \cite{CrainicFernandes:lecture}) is that a bundle map of Lie algebroids $\mathcal{H}:A\to B$ which commutes with the anchors satisfies the generalized Maurer-Cartan equation if and only if it is a Lie algebroid morphism.
After these preliminaries, we can turn to the proof of our result.
\vskip 10 pt

\begin{proof}[Proof of Proposition \ref{prop:morphisms}]
First we observe that triples $(M,\theta,h)$, where $\theta$ is a coframe on $M$ and $h:M\to X$ is a smooth map are in 1:1 correspondence with bundle maps $\mathcal{H}_\theta=(h,\theta):TM\to A$ which are fiberwise isomorphisms. Then one checks easily that equation \eqref{eq: dh} is equivalent to the condition that $\mathcal{H}_\theta$ commutes with the anchors. Moreover, if $\nabla$ is the trivial connection on the (trivial) bundle $A=X\times \Rr^n$, then equation \eqref{eq: dtheta} is equivalent to the condition that $\theta$ be a solution of the Maurer-Cartan equation \eqref{eq:maurer:cartan}. This is a consequence of the following computation:
\begin{align*}
(\d_{\nabla}\theta+\frac{1}{2}[\theta,\theta]_{\nabla})(\partial_{\theta^{i}},\partial_{\theta^{j}})
& = \bar{\nabla}_{(\partial_{\theta^{i}})}\theta(\partial_{\theta^{j}}) -
\bar{\nabla}_{(\partial_{\theta^{j}})}
\theta(\partial_{\theta^{i}}) -
\theta([\partial_{\theta^{i}},\partial_{\theta^{j}}])+\\
&\hskip 50 pt -T_{\bar{\nabla}}(\theta(\partial_{ \theta^{i}}),\theta(\partial_{\theta^{j}}))\\
& = \nabla_{ \sharp \alpha_{i}} \alpha_{j} -
\nabla_{ \sharp \alpha_{j}} \alpha_{i}-\theta([\partial_{\theta^{i}},\partial_{\theta^{j}}])+\\
&\hskip 50 pt -T_{\nabla}(\alpha_{i},\alpha_{j})\\
& =[\alpha_{i},\alpha_{j}]-\sum_k\theta^k([\partial_{\theta^{i}},\partial_{\theta^{j}}])\al_k\\
& =\sum_k\left(C_{ij}^k-\d\theta^k(\partial_{\theta^{i}},\partial_{\theta^{j}})\right)\al_k\\
& =\sum_k\left(\sum_{l<m}C_{lm}^k\theta^l\wedge\theta^m(\partial_{\theta^{i}},\partial_{\theta^{j}})-\d\theta^k(\partial_{\theta^{i}},\partial_{\theta^{j}})\right)\al_k\\
\end{align*}
where $\partial_{ \theta^{i}}=\frac{\partial}{\partial \theta^{i}}$ and $\bar{\nabla}$ denotes the pullback connection on $h^{\ast}A$.
\end{proof}

\begin{corol}
Let $A\to X$ be the classifying Lie algebroid of a Cartan's problem. If $(M,\theta,h)$ is a realization of the problem then the rank of $h$ is locally constant and $h$ maps each connected component of $M$ onto an open subset of an orbit of $A$.
\end{corol}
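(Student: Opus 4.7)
My plan is to translate the realization into a Lie algebroid morphism via Proposition \ref{prop:morphisms} and then exploit the general principle that a morphism of Lie algebroids which is a fiberwise isomorphism carries connected components onto open subsets of orbits.

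First, I would invoke Proposition \ref{prop:morphisms} to rewrite $(M,\theta,h)$ as a Lie algebroid morphism $\mathcal{H}_{\theta}:TM\to A$ covering $h$ which is fiberwise an isomorphism. The anchor-compatibility reads $\d h=\sharp_A\circ\mathcal{H}_{\theta}$, so since $\mathcal{H}_{\theta}$ is a fiberwise bijection one gets
\[
\Im \d h_p=\sharp_A(A_{h(p)})=T_{h(p)}\mathcal{O}_{h(p)},
\]
where $\mathcal{O}_{h(p)}$ is the orbit of $A$ through $h(p)$ (the equality of $\sharp_A(A_{h(p)})$ with the orbit's tangent space being the Stefan--Sussmann description of orbits of a Lie algebroid). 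Consequently the rank of $h$ at $p$ equals $\dim\mathcal{O}_{h(p)}$; it will be locally constant as soon as we know that $h$ stays within a single orbit on each connected component.

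The heart of the argument is therefore the second step: showing that each connected component $M_0$ of $M$ is mapped by $h$ into a single orbit of $A$. For this I would use that, because $\mathcal{H}_{\theta}$ is a fiberwise isomorphism, every section $\al\in\Gamma(A)$ admits a unique lift to a vector field $X_\al$ on $M$ characterized by $\mathcal{H}_\theta\circ X_\al=\al\circ h$; anchor-compatibility then makes $X_\al$ $h$-related to $\sharp_A(\al)$, so that $h$ intertwines the two flows. Since the flow of $\sharp_A(\al)$ preserves the orbits of $A$, applying this to a local frame $\al_1,\dots,\al_n$ of $A$ near $h(p_0)$ and composing flows shows that an open neighborhood of $p_0$ is mapped into $\mathcal{O}_{h(p_0)}$. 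Hence the set $\{p\in M_0:h(p)\in\mathcal{O}_{h(p_0)}\}$ is open; the same local argument applied at a point whose image lies in a different orbit shows it is also closed, so by connectedness it equals $M_0$.

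Once the inclusion $h(M_0)\subset\mathcal{O}$ is established, the rank of $h$ is constant equal to $\dim\mathcal{O}$ on $M_0$, and $\d h_p$ is surjective onto $T_{h(p)}\mathcal{O}$ at every point, so $h\colon M_0\to\mathcal{O}$ is a submersion and its image is open in the orbit. I expect the main obstacle to be justifying the flow comparison in the second paragraph, since $h$ is not a submersion onto $X$ but only onto the orbit; this requires invoking that $\sharp_A(A)\subseteq TX$ is Stefan--Sussmann integrable with leaves exactly the orbits of $A$, so that it is meaningful to say the flow of $\sharp_A(\al)$ preserves orbits and $h$ lands in a leaf.
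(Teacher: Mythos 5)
Your proposal is correct and follows essentially the same route as the paper: both rest on the identity $\d h=\sharp\circ\mathcal{H}_\theta$ furnished by Proposition \ref{prop:morphisms}, from which the rank statement and the containment of the image in an orbit are read off. The paper merely asserts these two conclusions, while you supply the standard justifications (lifting a frame of $A$ to $h$-related vector fields, the flow/open--closed argument, and openness of the image via submersivity onto the orbit), all of which are sound.
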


\begin{proof}
If $\mathcal{H}_\theta=(h,\theta):TM\to A$ is the fiberwise isomorphism associated with the realization $(M,\theta,h)$ then the condition that it commutes with the anchors just means that:
\[ \d h=\sharp\circ\mathcal{H}_\theta.\]
Therefore, we conclude that for each $m_0\in M$ (i) there is a neighborhood $U$ of $m_0$ such that image $h(U_0)$ is contained in the orbit of $m_0$, and (ii) $\rank(\d_{m_0}h)$ equals the dimension of this orbit.
\end{proof}

\begin{remark}
Note that, in general, a realization of Cartan's problem may not have constant rank. What the Corollary says is that if the problem has an associated Lie algebroid then every realization has constant rank. By Proposition \ref{NecessaryRealization}, this happens if there is a solution passing through every $x_0\in X$.
\end{remark}

\section{Maurer-Cartan Forms on Lie groupoids}            %
\label{sec:Maurer-Cartan}                                 %

For a Cartan realization problem whose manifold $X$ reduces to a point, the structure functions $C_{ij}^k$ are constant and the vector fields $F_i$ vanish. In this case, the classifying algebroid is simply a Lie algebra and one can produce solutions to the realization problem by integrating this Lie algebra into a Lie group and using its Maurer-Cartan form, as was explained in Example \ref{ex:constant}. Our solution for a general Lie algebroid will follow the same pattern where we will now need to integrate the Lie algebroid to a Lie groupoid. Therefore, we will need to discuss first Maurer-Cartan forms on Lie groupoids and their universal property. 

\subsection{Definition of a Maurer-Cartan Form}                  %
\label{subsec:forms}                                             %

In this paragraph we define Maurer-Cartan forms on Lie groupoids. These turn out to be foliated differential forms with values in the Lie algebroid.

Let $\F$ be a foliation on $M$. Recall that an \textbf{$\F$-foliated $k$-form on $M$} is a section of $\wedge^k T^{\ast}\F$, i.e., it is a $k$-form on $M$ which is only defined on $k$-tuples of vector fields which are all tangent to the foliation.
  
\begin{definition}
An $\F$-foliated differential $1$-form on $M$ with values in a Lie algebroid $A\rightarrow X$
is a bundle map
\[
\xymatrix{ T\F \ar[d] \ar[r]^\theta & A \ar[d] \\
M \ar[r]_h & X }
\]
which is compatible with the anchors, i.e., such that $\sharp(\theta(\xi))=\d h(\xi)$, $\forall\xi\in T\F$.
\end{definition}

Let $\G$ be a Lie groupoid. We will use the convention that arrows in $\G$ go from right to left, so given two arrows $g$ and $h$, the product $hg$ is defined provided that $\s(h)=\t(g)$. In particular, right translation by an element $g\in \mathcal{G}$ is a map
\[
R_{g} : \mathbf{s}^{-1}(\mathbf{t}(g)) \rightarrow\mathbf{s}^{-1}(\mathbf{s}(g))
\]
so it does not make sense to say that a form on $\mathcal{G}$ is right invariant. We must restrict ourselves to $\s$-foliated differential forms. We will say that an $\mathbf{s}$-foliated differential form $\omega$ on
$\mathcal{G}$ is \textbf{right invariant} if
\[
\omega(\xi) = \omega((R_{g})_{\ast}(\xi))
\]
for every $\xi$ tangent to an $\mathbf{s}$-fiber and $g\in\mathcal{G}$. Equivalently, we will write
\[
(R_{g})^{\ast}\omega = \omega.
\]

\begin{remark}
\label{rem:invariance}
If $\omega$ is a $\s$-foliated differential form on the groupoid $\G$ with values in its Lie algebroid $A$, the notion of right invariance still makes sense provided that we assume that $\omega:T^{\s}\G\to A$ is a bundle map that covers the target map $\t:\G\to X$.
\end{remark}

On every Lie groupoid we may define a canonical, right-invariant, differential $1$-form with values in its Lie algebroid:

\begin{definition}
The \textbf{Maurer-Cartan form of a Lie groupoid $\mathcal{G}$} is the Lie algebroid
valued $\mathbf{s}$-foliated $1$-form 
\[\xymatrix{T^{\s}\G \ar[r]^{\wmc} \ar[d] & A \ar[d]\\
\G \ar[r]_{\t} & X}\]
defined by
\[
\wmc(\xi) := \d_g R_{g^{-1}}( \xi) \in A_{\t(g)}, \quad (\xi\in T_{g}^{\mathbf{s}}\mathcal{G}).
\]
\end{definition}

For any foliation $\F$ of $M$, the bundle $T\F\to M$ has an obvious Lie algebroid structure with anchor the inclusion. Hence, it makes sense to talk about the generalized Maurer-Cartan equation in this context (see Section \ref{subsec:morphisms}), and we have the following:

\begin{prop}
The Maurer-Cartan form $\wmc$ is a Lie algebroid morphism $\wmc:T^{\s}\G \to A$, hence it satisfies the generalized Maurer-Cartan equation:
\[ \d_{\nabla}\wmc+\frac{1}{2}[\wmc,\wmc]=0. \]
\end{prop}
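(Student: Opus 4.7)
The plan is to first verify that $\wmc\colon T^{\s}\G \to A$ is a Lie algebroid morphism, viewing $T^{\s}\G \to \G$ with its natural Lie algebroid structure (anchor the inclusion $T^{\s}\G \hookrightarrow T\G$, bracket the restriction of the Lie bracket of vector fields on $\G$), and then to invoke the fundamental fact recalled just before the proposition: a bundle map of Lie algebroids commuting with anchors is a morphism if and only if it satisfies the generalized Maurer-Cartan equation.

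For anchor compatibility, I would pick $\xi \in T_g^{\s}\G$ and observe that $\wmc(\xi) = \d R_{g^{-1}}(\xi) \in A_{\t(g)}$. Since right translation preserves the target, $\t \circ R_{g^{-1}} = \t$ on $\s^{-1}(\s(g))$, so the chain rule gives $\d\t \circ \d R_{g^{-1}} = \d\t$. Because the anchor of $A$ is the restriction of $\d\t$ to $A$, this yields $\sharp_A(\wmc(\xi)) = \d\t(\xi)$, which is exactly the anchor condition for a bundle map covering $\t\colon \G \to X$.

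For bracket compatibility---the heart of the proof---I would exploit that $\wmc$ is a fiberwise isomorphism, with inverse sending $\alpha \in A_{\t(g)}$ to $\d R_g(\alpha) \in T^{\s}_g\G$. Hence, for each $\alpha \in \Gamma(A)$, one obtains the right-invariant vector field $\tilde\alpha(g) := \d R_g(\alpha_{\t(g)})$ satisfying
\[
\wmc(\tilde\alpha) = \alpha \circ \t.
\]
The Lie algebroid bracket on $\Gamma(A)$ is defined precisely so that $\alpha \mapsto \tilde\alpha$ is a Lie algebra embedding of $\Gamma(A)$ into $\X(\G)$, i.e., $[\tilde\alpha,\tilde\beta] = \widetilde{[\alpha,\beta]_A}$. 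Applying $\wmc$ to both sides gives $\wmc([\tilde\alpha,\tilde\beta]) = [\alpha,\beta]_A \circ \t$. Since every section of $T^{\s}\G$ is a $C^\infty(\G)$-linear combination of right-invariant vector fields and $\wmc$ is $C^\infty(\G)$-linear, this is exactly the condition required for $\wmc$ to preserve brackets as a bundle map of Lie algebroids covering $\t$.

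Combining these two steps yields that $\wmc$ is a Lie algebroid morphism, and the generalized Maurer-Cartan equation then follows from the fundamental fact cited above. I do not anticipate any genuine obstacle: the argument simply unravels the definition of the Lie algebroid of $\G$ through right-invariant vector fields. The one point requiring care is a bookkeeping check---that the convention $\wmc(\xi) = \d R_{g^{-1}}(\xi)$, combined with the right-to-left composition of arrows, indeed produces a bundle map covering $\t$ (as already noted in Remark~\ref{rem:invariance}) and matches right-invariance in the sense above rather than left-invariance.
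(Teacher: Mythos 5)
Your proof is correct and follows essentially the same route as the paper's: the paper also verifies anchor compatibility directly from the definition of $\wmc$ and checks bracket preservation by evaluating $\wmc$ on the right-invariant vector fields corresponding to sections of $A$. You simply make explicit two points the paper leaves implicit, namely the chain-rule computation $\d\t\circ\d R_{g^{-1}}=\d\t$ and the observation that checking the bracket condition on right-invariant fields suffices because they generate $\Gamma(T^{\s}\G)$ over $C^\infty(\G)$ and the obstruction $R_F$ is tensorial.
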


 \begin{proof}
The definition of the Maurer-Cartan form shows that $\wmc:T^{\s}\G \to A$ preserves anchors. On the other hand, if $\xi_1$ and $\xi_2$ are the right invariant vector fields on $\G$ corresponding to sections $\al_1$ and $\al_2$ of $A$, then the definition of $\wmc$ also shows that:
 \[ \wmc([\xi_1,\xi_2])=[\al_1,\al_2], \]
so it follows that $\wmc$ also preserves brackets.
\end{proof}

\subsection{The Local Universal Property}                        %
\label{subsec:local:propert}                                    %

We will now show that any $1$-form on a differentiable manifold, with values in an integrable Lie algebroid $A$,
satisfying the generalized Maurer-Cartan equation is locally the pullback of the Maurer-Cartan form on a Lie groupoid integrating $A$. We will need the following lemma.

\begin{lemma}
Let $\F$ be a foliation on a manifold $M$ and let $\theta$ be an $\F$-foliated $1$-form (over $h$) on $M$ with
values in a Lie algebroid $A \rightarrow X$ equipped with an
arbitrary connection $\nabla$. Assume that the distribution
$\mathcal{D} = \{\ker \theta_{x} : x\in M \} \subset T\F \subset TM$ has constant rank.
Then $\mathcal{D}$ is integrable if and only if $\d_{\nabla}
\theta(\xi_{1},\xi_{2}) = 0$ whenever $\xi _{1},\xi_{2} \in \mathcal{D}$.
\end{lemma}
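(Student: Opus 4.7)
The plan is to unfold the definition of $\d_\nabla\theta$ from \eqref{eq:differential} on a pair of sections of $\mathcal{D}$, observe that everything except the $\theta([\,\cdot\,,\,\cdot\,])$ term vanishes for free, and then invoke the Frobenius theorem. The role of $\theta$ here is that of a bundle map $T\F \to A$ over $h$, which is compatible with anchors (in the sense that $\sharp\theta(\xi)=\d h(\xi)$, with the inclusion $T\F\hookrightarrow TM$ being the anchor of the Lie algebroid $T\F$), so the formula \eqref{eq:differential} applies verbatim.

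First, I would remark that $\d_\nabla\theta(\xi_1,\xi_2)$ as defined by \eqref{eq:differential} is $C^\infty(M)$-bilinear and antisymmetric in $\xi_1,\xi_2\in\Gamma(T\F)$: this is a one-line Leibniz-rule check using that the derivative along $\sharp_{T\F}\xi=\xi$ on functions satisfies the ordinary Leibniz rule and that the pullback connection $\bar\nabla$ on $h^\ast A$ satisfies $\bar\nabla_\xi(f\sigma)=(\xi f)\sigma+f\bar\nabla_\xi\sigma$. Consequently, the condition ``$\d_\nabla\theta(\xi_1,\xi_2)=0$ whenever $\xi_1,\xi_2\in\mathcal{D}$'' is really a pointwise tensorial condition on $\wedge^2\mathcal{D}$, and can be tested on any local frame of $\mathcal{D}$ by sections of $\mathcal{D}$ (which exist because $\mathcal{D}$ has constant rank).

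Now take any two sections $\xi_1,\xi_2\in\Gamma(\mathcal{D})$. By definition of $\mathcal{D}=\ker\theta$, the sections $\theta(\xi_1)$ and $\theta(\xi_2)$ of $h^\ast A$ vanish identically, so the two covariant-derivative terms in \eqref{eq:differential} vanish as sections, leaving
\[ \d_\nabla\theta(\xi_1,\xi_2) = -\theta([\xi_1,\xi_2]).\]
Thus the hypothesis $\d_\nabla\theta(\xi_1,\xi_2)=0$ for all $\xi_1,\xi_2\in\mathcal{D}$ is equivalent to $[\xi_1,\xi_2]\in\Gamma(\ker\theta)=\Gamma(\mathcal{D})$, i.e.\ to the involutivity of $\mathcal{D}$ in the usual Frobenius sense (note that $[\xi_1,\xi_2]$ is automatically a section of $T\F$ since $T\F$ is involutive, so testing it against $\theta$ is legitimate).

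Finally, since $\mathcal{D}\subset TM$ has constant rank, the classical Frobenius theorem identifies involutivity with integrability, which finishes the equivalence. The proof is essentially a one-line cancellation plus Frobenius; the only point requiring care is the tensoriality step that makes the pointwise condition on $\mathcal{D}$ meaningful, but this is routine.
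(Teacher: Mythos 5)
Your proof is correct and follows essentially the same route as the paper's: both reduce $\d_{\nabla}\theta(\xi_1,\xi_2)$ to $-\theta([\xi_1,\xi_2])$ for sections of $\mathcal{D}$ (since the covariant-derivative terms vanish on $\ker\theta$) and then invoke the Frobenius theorem. Your extra remark on tensoriality is a reasonable way of justifying testing the condition on a local frame, which the paper handles implicitly by choosing a local basis of $\mathcal{D}$.
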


\begin{proof}
Choose a local basis $\xi_{1},...,\xi_{r} \in \X(\F)$ of $\mathcal{D}$ in
an open set of $M$. Then by Frobenius Theorem, $\mathcal{D}$ is
integrable if and only if $[\xi_{i},\xi_{j}] \in
\mathrm{span}\{\xi _{1},...,\xi_{r}\}$ for all $1 \leq i,j \leq
r$, which happens if and only if $\theta([\xi_{i},\xi_{j}]) = 0$
for all $1 \leq i,j \leq r$. Since $\theta(\xi_{i})= 0$ for all $1
\leq i \leq r$ we have
\begin{equation}
\d_{\nabla}\theta(\xi_{i},\xi_{j}) =
\bar{\nabla}_{\xi_{i}}\theta(\xi_{j}) -
\bar{\nabla}_{\xi_{j}}\theta(\xi_{i}) - \theta([\xi_{i},\xi_{j}])
= -\theta([\xi_{i},\xi_{j}])
\end{equation}
from which the result follows.
\end{proof}

\begin{theorem}[Local Universal Property]
\label{Universal} Let $\theta$ be a 1-form (over $h)$ on a manifold
$M$, with values in an integrable Lie algebroid $A$, that
satisfies the generalized Maurer-Cartan equation. Let $\G$ be a Lie groupoid integrating $A$ and denote by $\wmc$ its right invariant Maurer-Cartan form. Then, for each $p_0\in
M$ and $g_0\in\mathcal{G}$ such that $h(p_0) = \mathbf{t}(g_0)$, there
exists a unique locally defined diffeomorphism $\phi : M \rightarrow \mathbf{s}^{-1}(\s(g_0))$
satisfying
\[
\phi(p_0)=g_0,\quad \phi^{\ast}\wmc=\theta.
\]
\end{theorem}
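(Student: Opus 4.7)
The plan is to realize the graph of the desired $\phi$ as a leaf of an integrable distribution on an auxiliary manifold, following the classical graph-of-the-solution argument for ordinary Maurer--Cartan equations. Let $N := M \times_X \G$ denote the fibered product with respect to $h : M \to X$ and $\t : \G \to X$; since $\t$ is a submersion, $N$ is a smooth submanifold of $M \times \G$, and it carries a foliation $\F$ whose leaves are the intersections of $M \times \s^{-1}(y)$ with $N$, for $y \in X$. On $N$ I introduce the $A$-valued $\F$-foliated $1$-form
\[
\Theta := \pi_1^{\ast}\theta - \pi_2^{\ast}\wmc.
\]
This is well defined because on $N$ both summands take values in the same fiber $A_{\t(g)}=A_{h(p)}$, and both are compatible with the anchor. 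The kernel $\mathcal{D}:=\ker\Theta \subset T\F$ is a rank-$\dim M$ distribution given fiberwise by
\[
\mathcal{D}_{(p,g)} = \{(v,\, (R_g)_{\ast}\theta(v)) : v \in T_p M\},
\]
where $(R_g)_{\ast} : A_{\t(g)} \to T_g^{\s}\G$ is the inverse of $\wmc_g$.

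The first observation is that a local map $\phi : M \to \s^{-1}(\s(g_0))$ satisfies $\phi(p_0)=g_0$ and $\phi^{\ast}\wmc=\theta$ if and only if its graph is an integral manifold of $\mathcal{D}$ through $(p_0,g_0)$. Conversely, because $\mathcal{D}_{(p,g)}$ projects isomorphically onto $T_p M$ via $\pi_1$, every integral manifold through $(p_0,g_0)$ is locally the graph of some such $\phi$, and its $\G$-component stays inside $\s^{-1}(\s(g_0))$ since $\mathcal{D}$ is tangent to $\s$-fibers.

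The main step is to prove involutivity of $\mathcal{D}$, which by the preceding lemma amounts to checking that
\[
\d_{\nabla}\Theta(X_1,X_2) = 0 \qquad \text{for all } X_1,X_2 \in \mathcal{D},
\]
for some (any) connection $\nabla$ on $A$. Here I exploit that both $\theta$ and $\wmc$ satisfy the generalized Maurer--Cartan equation \eqref{eq:maurer:cartan}, together with the fact that \eqref{eq:bracket} depends only on the pointwise values of the bundle maps; consequently on pairs in $\mathcal{D}$, where $\pi_1^{\ast}\theta$ and $\pi_2^{\ast}\wmc$ agree, the quadratic terms $\pi_1^{\ast}[\theta,\theta]_{\nabla}$ and $\pi_2^{\ast}[\wmc,\wmc]_{\nabla}$ evaluate identically on $(X_1,X_2)$, and the $\d_{\nabla}$-terms inherit the same cancellation after invoking Maurer--Cartan on each factor. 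The independence of the generalized Maurer--Cartan equation on the choice of connection is used to arrange the two pullback connections compatibly on $h^{\ast}A$ and $\t^{\ast}A$.

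Once involutivity is established, Frobenius yields a unique maximal leaf $L$ of $\mathcal{D}$ through $(p_0,g_0)$, which lies in $M \times \s^{-1}(\s(g_0))$. The projection $\pi_1|_L$ is a local diffeomorphism at $(p_0,g_0)$, so I define
\[
\phi := \pi_2 \circ (\pi_1|_L)^{-1},
\]
on a neighborhood of $p_0$. By construction $\phi(p_0)=g_0$ and $\phi^{\ast}\wmc=\theta$, and uniqueness follows from uniqueness of leaves through a prescribed point. The main obstacle is the involutivity calculation: one must carefully track two distinct pullback connections on $A$ and verify that the Maurer--Cartan quadratic terms cancel along $\mathcal{D}$, which is a bookkeeping exercise that uses all of the structure at once (compatibility with anchors, the Maurer--Cartan equation for both $\theta$ and $\wmc$, and connection-independence).
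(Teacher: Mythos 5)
Your proposal is correct and follows essentially the same route as the paper: the fibered product $M\times_X\G$, the foliated form $\Theta=\pi_1^{\ast}\theta-\pi_2^{\ast}\wmc$, the identification of $\ker\Theta$ with graphs via the isomorphism onto $T_pM$, and integrability via the lemma reducing involutivity to the vanishing of $\d_{\nabla}\Theta$ on the kernel. The only (cosmetic) difference is in organizing the cancellation of the quadratic terms: you invoke the pointwise (tensorial) nature of $[\cdot,\cdot]_{\nabla}$ directly on kernel vectors, whereas the paper substitutes $\theta=\Omega+\wmc$ and expands; both yield the same conclusion.
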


\begin{remark} 
We can summarize the theorem by saying that, at least locally, there is a unique map $\phi: M \to \G$ which makes the following diagram of Lie algebroid morphisms commute:
\[\xymatrix{ TM \ar@{-->}[rr]^{\phi_{\ast}} \ar[dd]  \ar[dr]_{\theta} & & T^{\s}\G \ar[dd] \ar[dl]^{\wmc} \\
& A \ar[dd] & \\
M \ar@{-->}[rr]^{\quad \phi} \ar[dr]_h & & \G\ar[dl]^{\t} \\
& X &}\]
\end{remark}

\begin{proof}
The proof uses the usual technique of the graph, so we will construct the graph of $\phi$ by
integrating a convenient distribution. Uniqueness will then follow
from the uniqueness of integral submanifolds of a distribution. Let us set:
\[ M {_h\hskip-3 pt}\times_{\t}\mathcal{G} = \{(p,g) \in M\times\mathcal{G} : h(p) = \t(g)\}. \] 
Since $\t$ is a surjective submersion, this fibered product over $X$ is a manifold and it comes equipped with the foliation $\F$ given by the fibers of $\s \circ \pi_{\G}$. On 
$M{_h\hskip-3 pt}\times_{\t}\G$ consider the $A$-valued $\F$-foliated $1$-form
\[
\Omega=\pi_{M}^{\ast}\theta - \pi_{\mathcal{G}}^{\ast}\wmc \text{.}%
\]
Let $\mathcal{D} = \ker \Omega$ denote the associated distribution
on $M{_h\hskip-3 pt}\times_{\t}\mathcal{G}$.

In order to apply the previous lemma, we must first show that $\mathcal{D}$
has constant rank. We will do this by showing that
\[
(\d \pi_{M})_{(p,g)} \vert_{\mathcal{D}_{(p,g)}} :
\mathcal{D}_{(p,g)} \rightarrow T_pM
\]
is an isomorphism for each $(p,g) \in M
{_h\hskip-3 pt}\times_{\t}\mathcal{G}$. Note that this also implies
that if $\mathcal{D}$ is integrable then its leaf through $(p_0,g_0)$
is locally the graph of a uniquely defined diffeomorphism $\phi:M\to\s^{-1}(\s(g_0))$ satisfying $\phi(p_0)=g_0$.

Suppose that $( \d \pi_{M})_{(p,g)}(v,w) = 0$ for some
$(v,w)\in\mathcal{D}_{(p,g)}$. Since $\mathcal{D}$ is contained in $T\F$, it follows that $w$ is $\mathbf{s}$-vertical
and $\wmc(w)$ is simply the right translation of $w$ to
$\mathbf{1}_{\mathbf{t}(g)}$ and thus,
\begin{eqnarray*}
( \d \pi_{M})_{(p,g)}(v,w) = 0 & \implies & v=0 \\
& \implies & \wmc(w) = 0 \quad ( =\theta(v)) \\
& \implies & w=0 \\
& \implies & (v,w) = 0
\end{eqnarray*}
so that $( \d \pi_{M})_{(p,g)}$ is injective. Now, if $v\in
T_{p}M$ then $(v,(R_{g})_{\ast}\theta(v))$ is an element of
$\mathcal{D}_{(p,g)}$ so $(\d \pi_{M})_{(p,g)}$ is also
surjective.

Having accomplished this, we may use the preceding lemma to
complete the proof. We compute (omitting the pullbacks for
simplicity):
\begin{align*}
\d_{\nabla}\Omega &  = \d_{\nabla}\theta - \d_{\nabla}\wmc\\
& = -\frac{1}{2}[\theta,\theta] + \frac{1}{2}[\wmc,\wmc]  \text{.}
\end{align*}
Replacing $\theta=\Omega+\wmc$ we obtain
\begin{align*}
\d_{\nabla}\Omega &  = -\frac{1}{2}[\Omega + \wmc,\Omega +
\wmc] + \frac{1}{2}[\wmc,\wmc] \\
&  = -\frac{1}{2}[\Omega,\Omega] - \frac{1}{2}[\Omega,\wmc] -
\frac{1}{2}[\wmc,\Omega] \text{.}
\end{align*}
So $\d_{\nabla}\Omega(\xi_{1},\xi_{2}) = 0$ whenever
$\Omega(\xi_{1}) = 0 = \Omega(\xi_{2})$. Hence $\mathcal{D}$ is
integrable and the proof is completed.
\end{proof}

\begin{remark}
With a slight modification, the theorem above is still valid even when $A$
is not integrable. In fact, since an $A$-valued Maurer-Cartan form on $M$
\begin{equation*}
(\theta,h) \in\Omega^{1}(M,A)
\end{equation*}
is the same as a Lie algebroid morphism
\begin{equation*}
\xymatrix{ TM \ar[d] \ar[r]^\theta & A \ar[d] \\
M \ar[r]_h & X }
\end{equation*}
it follows that $h(M)$ lies in a single orbit of $A$ in $X$. By
restricting $h$ to a small enough neighborhood, we may assume that
it's image is a contractible open set $U\subset L$ in an orbit $L$
of $A$ in $X$. Then, the restriction of $A$ to $U$ is integrable
\cite{CrainicFernandes} and we may proceed as in the proof of the
theorem.
\end{remark}

As a consequence of the theorem we obtain the following useful
corollary:

\begin{corol}\label{symmetrywmc}
Let $\G$ be a Lie groupoid with Maurer-Cartan form $\wmc$ and let $x,y\in X$ be 
points in the same orbit. If $\phi : \s^{-1}(x) \rightarrow \s^{-1}(y)$ is a
symmetry of $\wmc$ (i.e., $\phi^{\ast}\wmc = \wmc$) then 
$\phi = R_g$ for some $g \in \G$.
\end{corol}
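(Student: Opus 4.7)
The plan is to identify the right groupoid element $g$ by evaluating $\phi$ at one point and then invoke the uniqueness half of the Local Universal Property (Theorem \ref{Universal}) to conclude $\phi = R_g$ everywhere.

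First, I would extract the target condition implicit in $\phi^{\ast}\wmc = \wmc$. Since $\wmc$ is a bundle map covering $\t$ (see Remark \ref{rem:invariance}), the equation $\phi^{\ast}\wmc = \wmc$ forces $\t \circ \phi = \t$ on $\s^{-1}(x)$. Now pick any base point $h_0 \in \s^{-1}(x)$ and set $g_0 := \phi(h_0) \in \s^{-1}(y)$; by the observation just made, $\t(g_0) = \t(h_0)$, so the product $g := h_0^{-1} \cdot g_0$ is defined. One checks directly that $\s(g) = \s(g_0) = y$ and $\t(g) = \s(h_0) = x$, so right translation $R_g: \s^{-1}(x) \to \s^{-1}(y)$ is well-defined, and $R_g(h_0) = h_0 \cdot h_0^{-1} \cdot g_0 = g_0 = \phi(h_0)$.

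The second step is to compare $\phi$ and $R_g$ using Theorem \ref{Universal}. View $\wmc|_{\s^{-1}(x)}$ as an $A$-valued Maurer--Cartan form on $M = \s^{-1}(x)$ covering the map $h = \t|_{\s^{-1}(x)}$. Both $\phi$ and $R_g$ are maps $\s^{-1}(x) \to \s^{-1}(\s(g_0))$ taking $h_0$ to $g_0$ and pulling back $\wmc$ on $\G$ to $\wmc|_{\s^{-1}(x)}$: for $\phi$ this is the hypothesis, for $R_g$ this is the defining right-invariance of $\wmc$. The uniqueness clause of Theorem \ref{Universal} then forces $\phi = R_g$ on a neighborhood of $h_0$.

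To upgrade this to a global equality, I would use a clopen argument: the set $E := \{\, h \in \s^{-1}(x) : \phi(h) = R_g(h)\,\}$ is closed by continuity and open by the local uniqueness just established (apply Theorem \ref{Universal} at each point of $E$). Assuming $\s^{-1}(x)$ is connected (otherwise we repeat the argument on each connected component, choosing a possibly different $g$ per component and then observing that the element constructed on any component works for every component because the analysis is independent of the choice of $h_0$), we conclude $E = \s^{-1}(x)$, hence $\phi = R_g$ globally. The only mildly subtle point is the bookkeeping of source and target fibers under the product $h_0^{-1} g_0$; once that is settled, the corollary is an immediate application of the Local Universal Property.
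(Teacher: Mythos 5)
Your proof is correct and follows essentially the same route as the paper's: identify the candidate groupoid element by evaluating $\phi$ at a single point, note that $\phi^{\ast}\wmc=\wmc$ forces $\t\circ\phi=\t$ so that the relevant products are defined, and then invoke the uniqueness clause of Theorem \ref{Universal} to conclude $\phi=R_g$ (the paper simply takes $h_0=\mathbf{1}_x$, so its $g=\phi(\mathbf{1}_x)$ is the special case of your $g=h_0^{-1}\cdot\phi(h_0)$, and your explicit clopen argument just fills in the local-to-global step that the paper elides). The only shaky point is your parenthetical about disconnected $\s$-fibers: the assertion that the element $g$ produced on one component automatically works on every other component does not follow from anything you wrote (and is false in general --- on a disconnected fiber one can translate different components by different elements and still preserve $\wmc$), but this concerns a degenerate case in which the statement itself requires source-connectedness and which the paper's own proof does not address either.
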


\begin{proof}
Note that the equation $\phi^{\ast}\wmc = \wmc$ only makes sense provided $\t\circ\phi=\t$ (see Remark \ref{rem:invariance}).
Therefore, $g:=\phi(\mathbf{1}_x)$ is an arrow joining $y$ to $x$ and 
\[ R_g:\s^{-1}(x) \rightarrow \s^{-1}(y), h\mapsto hg\] 
is a symmetry of $\wmc$. Since $R_g\mathbf{1}_x=g=\phi(\mathbf{1}_x)$, from the uniqueness part of Theorem \ref{Universal} we must have $\phi=R_g$.
\end{proof}

\subsection{The Global Universal Property}                       %
\label{subsec:global:propert}                                    %

There is a more conceptual proof of Theorem \ref{Universal}
that will lead us to a global version of the universal property of
Maurer-Cartan forms. 

First, observe that Theorem \ref{Universal} is a local result so we may assume 
that $M$ is simply connected. The source simply connected Lie groupoid integrating 
$TM$ is then the pair groupoid $M\times M\rightrightarrows M$. By Lie's Second Theorem
(see \cite{CrainicFernandes:lecture}), there exists a unique morphism of Lie groupoids
\[
\xymatrix{ M\times M \ar@<.5ex>[d]\ar@<-.5ex>[d]  \ar[r]^-{H} &
\mathcal{G} \ar@<.5ex>[d]\ar@<-.5ex>[d]
\\
M \ar[r]_h & X }
\]
integrating $\theta$. Now, if we are given $p_{0}\in M$ and $g_0\in\G$ with $h(p_0)=\t(g_0)$, then we may write
\[
H(p,p^{\prime}) = \phi(p)\phi(p^{\prime})^{-1}
\]
where $\phi:M\rightarrow\mathbf{s}^{-1}(\s(g_{0}))\subset\mathcal{G}$ is defined by
$\phi(p):= H(p,p_{0})g_0 \text{.}$ Note that $\phi$ satisfies
\[ \phi(p_{0})=g_0,\quad \phi^{\ast}\wmc=\theta, \]
and so it has the desired properties.

In general, when $M$ is not simply connected, the Lie algebroid morphism $\theta$ integrates to a Lie groupoid morphism
\[
\xymatrix{ \Pi_1(M) \ar@<.5ex>[d]\ar@<-.5ex>[d]  \ar[r]^-{F} &
\mathcal{G} \ar@<.5ex>[d]\ar@<-.5ex>[d]
\\
M \ar[r]_h & X }
\]
where $\Pi_{1}(M)$ denotes the fundamental groupoid of $M$. The
problem of determining when $\theta$ is globally the pullback of the
Maurer-Cartan form on a Lie groupoid $\mathcal{G}$ integrating $A$
is then reduced to determining when the morphism $F$ above factors
through the groupoid covering projection
\begin{align*}
p:\Pi_{1}(M)&\to M\times M,\\ 
p([\gamma])&:=(\gamma(1),\gamma(0)).
\end{align*}

\begin{theorem}[Global Universal Property]
\label{thm: global universal property}
Let $A$ be an integrable Lie algebroid with source simply
connected Lie groupoid $\mathcal{G}(A)$ and let
$(\theta,h)\in\Omega^{1}(M,A)$ be an $A$-valued differential
$1$-form on $M$. Then, given $p_0\in M$ and $g_0\in\G(A)$ with $h(p_0)=\t(g_0)$, there exists an everywhere defined local diffeomorphism
\[
\phi : M \rightarrow \mathbf{s}^{-1}(\s(g_{0})),\quad
\phi(p_{0})=g_0, \quad \phi^{\ast}\wmc=\theta,
\]
if and only if
\begin{description}
\item[Local obstruction] The 1-form $\theta$ satisfies the generalized Maurer-Cartan
equation and
\item[Global obstruction] The morphism $F$ integrating $\theta$ is
trivial when restricted to the fundamental group $\pi_1(M,p_{0})$ (i.e, the
isotropy group at $p_{0}$).
\end{description}
\end{theorem}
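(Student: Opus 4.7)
The plan is to follow the approach sketched in the paragraphs preceding the statement: integrate $\theta$ via Lie II to a groupoid morphism $F \colon \Pi_1(M) \to \G(A)$, and then characterize precisely when $F$ descends through the canonical groupoid covering $p \colon \Pi_1(M) \to M \times M$. Once a factorization $F = H \circ p$ exists, the desired map will be recovered by the simply-connected formula $\phi(q) := H(q,p_0)\,g_0$.

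For necessity, suppose such a $\phi$ exists. Then $\theta = \phi^\ast \wmc$ is a pullback of the Lie algebroid morphism $\wmc$, hence itself satisfies the generalized Maurer-Cartan equation (the local obstruction). Setting $H(p,q) := \phi(p)\phi(q)^{-1}$ produces a smooth groupoid morphism $M \times M \to \G(A)$ such that $H \circ p$ integrates $\theta$, so by uniqueness in Lie II it must equal $F$. Since $p$ sends every element of $\pi_1(M, p_0)$ to the unit $(p_0, p_0)$, so does $F$, giving the global obstruction.

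For sufficiency, the local obstruction together with Lie II produces $F$ integrating $\theta$, and we must show $F$ descends through $p$. The kernel of $p$ at $q \in M$ is the isotropy group $\pi_1(M, q)$, so it suffices to extend triviality of $F$ from $\pi_1(M, p_0)$ to every $\pi_1(M, q)$. Assuming $M$ connected, pick any path class $[\alpha] \in \Pi_1(M)$ from $p_0$ to $q$; the conjugation identity $\pi_1(M, q) = [\alpha]\,\pi_1(M,p_0)\,[\alpha]^{-1}$ inside $\Pi_1(M)$ gives $F([\gamma]) = F([\alpha]) \cdot 1 \cdot F([\alpha])^{-1} = 1$ for every $[\gamma] \in \pi_1(M, q)$. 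Hence $\ker p \subseteq \ker F$ and $F$ descends set-theoretically to a groupoid morphism $H \colon M \times M \to \G(A)$; smoothness of $H$ is automatic since $p$ is a covering, hence a local diffeomorphism.

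Define $\phi(q) := H(q, p_0)\,g_0 = R_{g_0}(H(q, p_0))$. Then $\phi(p_0) = g_0$ and $\phi$ takes values in $\s^{-1}(\s(g_0))$. To verify $\phi^\ast \wmc = \theta$, by right-invariance of $\wmc$ it suffices to show $h^\ast \wmc = \theta$ for $h(q) := H(q, p_0)$; differentiating the groupoid-morphism identity $H \circ R_{(q,p_0)} = R_{H(q,p_0)} \circ H$ gives $h_\ast v = (R_{H(q,p_0)})_\ast\,\theta(v)$ — here $F = H \circ p$ together with $p_\ast = \mathrm{id}_{TM}$ identifies the Lie algebroid derivative of $H$ with $\theta$ — after which right-invariance yields $\wmc(h_\ast v) = \theta(v)$. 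Finally, $\phi_\ast = (R_{\phi(q)})_\ast \circ \theta_q$ is a linear isomorphism at each point (so $\phi$ is a local diffeomorphism) because the dimension-matching implicit in the statement forces $\theta$ to be fiberwise iso. The main obstacle, both conceptually and technically, is the descent step in the sufficiency direction: translating algebraic triviality of $F$ on the single isotropy $\pi_1(M, p_0)$ into a \emph{smooth} factorization through the pair groupoid. The conjugation argument handles the algebraic part, while smoothness relies on the \'etale nature of $p \colon \Pi_1(M) \to M \times M$.
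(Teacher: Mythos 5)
Your proposal is correct and follows essentially the same route as the paper's proof: necessity via $H(p,p')=\phi(p)\phi(p')^{-1}$ and the uniqueness in Lie II, sufficiency via descending $F$ through the covering $\Pi_1(M)\to M\times M$ and setting $\phi(q)=H(q,p_0)g_0$. The only differences are expository — you spell out the conjugation argument showing triviality on $\pi_1(M,p_0)$ propagates to every isotropy group (which the paper leaves implicit), and you verify $\phi^*\wmc=\theta$ by differentiating the morphism identity directly instead of invoking the uniqueness clause of the local universal property; note only that the fiberwise-isomorphism property of $\theta$ needed for the ``local diffeomorphism'' conclusion is a standing implicit hypothesis rather than a consequence of dimension count.
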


\begin{proof}
We begin by proving that both conditions are necessary. It is
clear that if $\theta=\phi^{\ast}\wmc$ then $\theta$ satisfies the
generalized Maurer-Cartan equation. So all we have to prove is
that $F$ is trivial on the isotropy group $\pi_{1}(M,p_{0})$.
Suppose that $\phi$ exists. Then the map
\[
H:M\times M\rightarrow\mathcal{G}%
\]
given by
\[
H(p,p^{\prime}) = \phi(p)\phi(p^{\prime})^{-1} \text{.}
\]
defines a Lie groupoid morphism over the map $\t \circ \phi$.

It then follows from $\phi^{\ast}\wmc = \theta$  that $\t \circ \phi = h$ and that 
$H$ integrates $\theta$. To see this, notice that if $f: M \to \s^{-1}(x)$ and $g: M \to \t^{-1}(x)$ are
smooth maps, then the differential of $\varphi(p,p^{\prime}) = f(p)\cdot
g(p^{\prime})$ is given by
\[
(\d \varphi)_{(p,p^{\prime})}(v,w) = (\d L_{f(p)})_{g(p^{\prime})}(\d g)_{p^{\prime}}(w) +
(\d R_{g(p^{\prime})})_{f(p)}({\d f})_{p}(v)
\]
for $v,w\in T_{(p,p^{\prime})}(M \times M)$. Thus, in our case we obtain, for any $v\in T_{p}M$,
\begin{align*}
(\d H)_{(p,p)}(0,v) 
&=(\d R_{\phi(p)^{-1}})_{\phi(p)}(\d \phi)_{p}(v)\\
&=\wmc(\phi_{\ast}v)\\
&=\phi^{\ast}\wmc(v)\\
&=\theta(v).
\end{align*}

Finally, since $H$ integrates $\theta$ we obtain the commutative diagram
\[
\xymatrix{
\Pi_1(M) \ar[d]_p \ar[r]^-F & \mathcal{G} \\
M \times M \ar[ru]_-H }
\]
where $p$ denotes the covering projection
$p([\gamma])=(\gamma(1),\gamma(0))$. We conclude that $F$ only depends on the
endpoints of $\gamma$ and not on it's homotopy class, proving the
claim.

Conversely, suppose both conditions are satisfied. Then by Theorem \ref{Universal}, it follows that $\theta$ is locally the pullback of $\wmc$ by a map $\phi_{\mathrm{loc}}$. However, since
$F$ only depends on the end points of paths $\gamma$ and not on their homotopy class, $F$ factors through
\[
\xymatrix{
\Pi_1(M) \ar[d]_p \ar[r]^-F & \mathcal{G} \\
M \times M \ar@{.>}[ru]_-H }
\]
Thus, by setting
\[
\phi(p):=H(p,p_{0})g_0
\]
we obtain a global map which, by the uniqueness result in Theorem \ref{Universal}, restricts to the locally defined maps $\phi_{\mathrm{loc}}$. It follows that $\phi^{\ast}\wmc = \theta$
and $\phi(p_{0})=g_0$ proving the theorem.
\end{proof}

Recall that we say that a Lie algebroid $A$ is \emph{weakly integrable} if the restriction of $A$ to any orbit is integrable. An immediate consequence of the global universal property is the following corollary:

\begin{corol}
\label{cor:contrat:mnfld}
Let $U$ be a contractible manifold, let $A\to X$ be a weakly integrable Lie algebroid and let $h:U\to X$ be a smooth map.  Any two $A$-valued 1-forms $\theta_1,\theta_2\in\Omega^1(U;A)$ over $h$ satisfying the generalized Maurer-Cartan equation are equivalent: for any $p\in U$ there exists a diffeomorphism $\phi:U\to U$, fixing $p$ and commuting with $h$, such that $\phi^*\theta_2=\theta_1$.
\end{corol}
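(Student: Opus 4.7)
\smallskip
\noindent\textbf{Proof plan.}
Since each $\theta_i$ satisfies the generalized Maurer-Cartan equation, it is a Lie algebroid morphism $\theta_i:TU\to A$, so the image $h(U)$ lies inside a single orbit $L$ of $A$ and both $\theta_i$ take values in $A|_L$. By weak integrability, fix a source-simply-connected Lie groupoid $\G\tto L$ integrating $A|_L$, with Maurer-Cartan form $\wmc$. Since $U$ is contractible, $\pi_1(U,p)$ is trivial, hence the global obstruction appearing in Theorem \ref{thm: global universal property} is automatic for each $\theta_i$.

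Applying Theorem \ref{thm: global universal property} to $\theta_1$ and $\theta_2$, with base point $p$ and $g_0:=\mathbf{1}_{h(p)}$ (so $\t(g_0)=h(p)$), produces everywhere-defined local diffeomorphisms
\[
\phi_1,\phi_2:U\longrightarrow \s^{-1}(h(p))\subset\G,
\]
with $\phi_i(p)=\mathbf{1}_{h(p)}$, $\t\circ\phi_i=h$, and $\phi_i^{\ast}\wmc=\theta_i$. The desired $\phi:U\to U$ will then be obtained by ``dividing $\phi_1$ by $\phi_2$'': if one can produce $\phi$ with $\phi_2\circ\phi=\phi_1$ and $\phi(p)=p$, then immediately
\[
\phi^{\ast}\theta_2=\phi^{\ast}\phi_2^{\ast}\wmc=\phi_1^{\ast}\wmc=\theta_1,\qquad h\circ\phi=\t\circ\phi_2\circ\phi=\t\circ\phi_1=h.
\]

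The actual construction of $\phi$ proceeds by the graph technique used in the proof of Theorem \ref{Universal}. On $U\times U$ one introduces the bundle map $\Omega:=\pi_1^{\ast}\theta_1-\pi_2^{\ast}\theta_2$ (well defined as an $A$-valued object along the diagonal in $X$, since $h\circ\pi_1=h\circ\pi_2$ is enforced by the anchor condition) and the distribution $\mathcal{D}:=\ker\Omega$ on $U\times U$. One checks, exactly as in the proof of Theorem \ref{Universal}, that $(\d\pi_1)|_{\mathcal{D}}$ is a fiberwise isomorphism (so $\mathcal{D}$ has the right rank), and that $\mathcal{D}$ is involutive by exploiting that both $\theta_1,\theta_2$ satisfy the generalized Maurer-Cartan equation; the algebraic manipulation mimics verbatim the one carried out in Theorem \ref{Universal} for the form $\pi_M^{\ast}\theta-\pi_{\G}^{\ast}\wmc$. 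The leaf of $\mathcal D$ through $(p,p)$ is then the graph of the map $\phi$.

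The main obstacle is the globalization step: one must argue that the leaf through $(p,p)$ projects \emph{globally} (not merely locally) diffeomorphically onto $U$ under $\pi_1$, so that $\phi$ is a well-defined diffeomorphism of $U$ and not merely a germ. This is exactly where the contractibility of $U$ is used: it ensures that the leaf, being an integral submanifold of a distribution that projects isomorphically at every point to the tangent bundle of a simply connected base, extends globally without monodromy, and that the inverse $\phi^{-1}$ (produced symmetrically by exchanging the roles of $\theta_1$ and $\theta_2$) is also globally defined. Uniqueness in Theorem \ref{Universal} guarantees that the two constructions are mutually inverse, completing the proof.
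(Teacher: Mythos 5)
Your first half follows the paper's (implicit) argument exactly: the paper offers no proof beyond calling the corollary ``an immediate consequence of the global universal property'', and the intended route is precisely yours --- restrict to the orbit containing $h(U)$, use weak integrability to integrate, observe that contractibility of $U$ kills the global obstruction of Theorem \ref{thm: global universal property}, and obtain $\phi_1,\phi_2:U\to\s^{-1}(h(p))$ with $\phi_i(p)=\mathbf{1}_{h(p)}$, $\t\circ\phi_i=h$ and $\phi_i^*\wmc=\theta_i$; one then takes $\phi=\phi_2^{-1}\circ\phi_1$ near $p$, where $\phi_2$ is invertible because it is a local diffeomorphism. Your subsequent detour through the graph construction on the fibered product (note: $\Omega$ only makes sense on $U\,{_h\hskip-2pt\times_h}\,U$, not on all of $U\times U$) is a legitimate, if redundant, way of reproving this local statement.

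The genuine gap is your globalization step. The claim that simple connectivity of $U$ forces the leaf of $\mathcal D$ through $(p,p)$ to project globally diffeomorphically onto $U$ is false: a local diffeomorphism from a leaf onto a simply connected manifold need not be a covering map --- without a completeness hypothesis the leaf can ``run off'' before covering all of $U$, and there is no monodromy argument to invoke. Indeed, the global form of the statement cannot be proved because it fails: take $U=\Rr$, $X=\{*\}$, $A=\Rr$ the abelian Lie algebra, $\theta_1=\d t$ and $\theta_2=e^{-t^2}\d t$; both are nowhere-vanishing $A$-valued $1$-forms satisfying the (here vacuous) generalized Maurer--Cartan equation, yet $\phi^*\theta_2=\theta_1$ with $\phi(0)=0$ forces $\int_0^{\phi(t)}e^{-u^2}\,\d u=t$, which has no solution once $|t|\ge\sqrt{\pi}/2$. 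The corollary must therefore be read locally --- as producing a diffeomorphism between neighborhoods of $p$ --- which is exactly what the composition $\phi_2^{-1}\circ\phi_1$ yields and all that is used later in Proposition \ref{prop: formal equivalence}. Your proof should stop at that local conclusion rather than assert a global one.
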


\begin{remark}
Using the $A$-path approach to integrability, introduced in \cite{CrainicFernandes}, one can express the global obstruction condition of the preceding theorem at the infinitesimal level, i.e., without any mention to the Lie groupoid $\G$ integrating $A$. In fact, given any curve $\gamma : I \to M$, the path $\theta \circ \dot{\gamma}: I \to A$ satisfies 
\[\sharp (\theta \circ {\gamma}(t)) =  \frac{\d}{\d t}(h \circ \gamma )(t) \text{ for all } t \in I\]
and thus, is an $A$-path. We can then rewrite the condition as:
\begin{description}
\item[Global obstruction] \emph{For every loop $\gamma$ in $M$, homotopic to the constant curve at $p$, the $A$-path $\theta \circ \dot{\gamma}$ is $A$-homotopic to the constant zero $A$-path at $h(p)$.}
\end{description}
Note also that this last condition can be expressed in terms of a differential equation (see \cite{CrainicFernandes}).
\end{remark}

\section{Local Classification and symmetries}                    %
\label{sec:localclass:symm}                                      %

We now return to Cartan's realization problem. We will use the Lie groupoid integrating the classifying Lie algebroid to solve the Local Classification Problem and to study the symmetries of the problem.

\subsection{Local Classification Problem}                        %
\label{Solving the Classification Problem}                       %

The results of the previous two sections lead to a solution to the Local Classification Problem. Before we give the main result, we must say a few words about equivalence of realizations. By this we mean: 

\begin{definition}
Let $(n,X,C_{ij}^{k},F_i)$ be the initial data of a Cartan's realization. Given two realizations $(M_1,\theta_1,h_1)$ and $(M_2,\theta_2,h_2)$ a (locally defined) diffeomorphism $\phi:M_1\to M_2$ is called a (local) \textbf{equivalence of realizations} if
\[ \phi^*\theta_2=\theta_1,\quad h_2\circ \phi=h_1. \]
\end{definition}

This notion of equivalence should not be confused with the notion of equivalence of coframe.

\begin{remark}
Note that if $\theta$ is a coframe on $M$ and $A_\theta\to X_\theta$ is the corresponding classifying Lie algebroid, then we have the realization $(M,\theta,\kappa_\theta)$. In this case, a self equivalence of the realization $\phi:(M,\theta,\kappa_\theta)\to(M,\theta,\kappa_\theta)$ is the same thing as an equivalence of the coframe $\phi:(M,\theta)\to(M,\theta)$, because any self-equivalence $\phi$ of the coframe must satisfy $\kappa_\theta\circ\phi=\kappa_\theta$. Also, given two realizations $(M_1,\theta_1,h_1)$ and $(M_2,\theta_2,h_2)$, any equivalence of coframes $\phi:(M_1,\theta_1)\to(M_2,\theta_2)$ obviously determines an equivalence of realizations $\phi:(M_1,\theta_1,h_2\circ\phi)\to(M_2,\theta_2,h_2)$ but not necessarily of the original realizations, since the identity $h_2\circ \phi=h_1$ may not hold.
\end{remark}

Now we can state:

\begin{theorem}[Local Classification]
\label{thm: classification coframes}
Let $(n,X,C_{ij}^{k},F_i)$ be the initial data of a Cartan's realization problem and denote by $A\to X$ its classifying Lie algebroid. Then:
\begin{enumerate}
\item[(i)] Any realization is locally equivalent to a neighborhood of the
identity of a fiber $\s^{-1}(x_0)$ of a groupoid $\mathcal{G}$ integrating $A$, equipped with
the Maurer-Cartan form.
\item[(ii)] Two realizations are locally equivalent if and only if they correspond to the same point $x_0\in X$.
\end{enumerate}
\end{theorem}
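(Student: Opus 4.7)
The plan is to reduce both assertions to the Local Universal Property (Theorem \ref{Universal}), exploiting the reinterpretation of realizations as Lie algebroid morphisms $TM\to A$ from Proposition \ref{prop:morphisms}. For part (i), I would start with a realization $(M,\theta,h)$ and a chosen base point $p_0\in M$, set $x_0:=h(p_0)$, and shrink $M$ to a neighborhood of $p_0$ whose image $h(M)$ lies in a contractible open subset of the orbit of $A$ through $x_0$. On such a subset $A$ is integrable (see the remark following Theorem \ref{Universal}), so a Lie groupoid $\G$ integrating $A$ over a neighborhood of this orbit exists. Before invoking the universal property, I would check that $\s^{-1}(x_0)$ carries a canonical realization of the Cartan problem: since $\wmc$ restricts to a fiberwise isomorphism $T^{\s}\G|_{\s^{-1}(x_0)}\to A$ and $A=X\times\Rr^n$ is trivial, the components of $\wmc$ form a coframe on $\s^{-1}(x_0)$, and the classifying map is $\t|_{\s^{-1}(x_0)}$.

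Next, I would apply Theorem \ref{Universal} with $g_0=\mathbf{1}_{x_0}$, producing a unique local diffeomorphism $\phi:M\to \s^{-1}(x_0)$ satisfying $\phi(p_0)=\mathbf{1}_{x_0}$ and $\phi^{\ast}\wmc=\theta$. The key verification is that $\phi$ is a genuine equivalence of \emph{realizations}, not just of coframes: since $\wmc$ covers $\t$ and $\theta$ covers $h$, the pullback identity $\phi^{\ast}\wmc=\theta$ forces $\t\circ\phi=h$, which is exactly the compatibility required by the definition of equivalence of realizations. This completes (i).

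For part (ii), necessity is immediate: if $\phi:(M_1,\theta_1,h_1)\to(M_2,\theta_2,h_2)$ is a local equivalence of realizations defined near $p_1$, then $h_2(\phi(p_1))=h_1(p_1)$ shows that both realizations meet a common point $x_0\in X$. For sufficiency, given realizations with $h_1(p_1)=x_0=h_2(p_2)$, I would apply (i) twice to obtain local equivalences $\phi_i:(M_i,\theta_i,h_i)\to(\s^{-1}(x_0),\wmc|_{\s^{-1}(x_0)},\t)$ with $\phi_i(p_i)=\mathbf{1}_{x_0}$, and then take the composition $\phi_2^{-1}\circ\phi_1$ to produce the desired local equivalence.

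The main obstacle I anticipate is the integrability caveat: the statement refers to \emph{a} Lie groupoid integrating $A$, but $A$ need not be integrable as a whole. The resolution is precisely the remark after Theorem \ref{Universal} — after localising $M$ so that $h(M)$ sits inside a contractible piece of an orbit, the restricted algebroid is integrable, and this suffices because the assertion is purely local. A secondary point of care is verifying that the restriction of $\wmc$ to an $\s$-fiber really yields a coframe solving the Cartan problem, but this follows directly from the fiberwise-isomorphism property of $\wmc$ combined with the triviality of $A$.
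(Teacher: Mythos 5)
Your proposal is correct and follows essentially the same route as the paper: reinterpret the realization as a Lie algebroid morphism via Proposition \ref{prop:morphisms}, pass to a neighborhood of the orbit where $A$ is integrable, apply the Local Universal Property of the Maurer--Cartan form with $g_0=\mathbf{1}_{x_0}$ for part (i), and compose two such local equivalences for part (ii). Your explicit check that $\phi^{\ast}\wmc=\theta$ forces $\t\circ\phi=h$ (so that $\phi$ is an equivalence of realizations, not merely of coframes) is a point the paper leaves implicit, but otherwise the arguments coincide.
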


\begin{proof}
Recall that we have a basis $\{\alpha_{1},\ldots,\alpha_{n}\}$ of global sections of the classifying Lie algebroid $A\rightarrow X$ such that:
\[ [\al_i,\al_j]= -C_{ij}^k \al_k,\quad \sharp \al_i=F_i.\]
Let us suppose for the moment that $A$ is an integrable Lie algebroid and that $\G$ is a Lie groupoid integrating $A$. We denote by $\wmc$ the Maurer-Cartan form of $\mathcal{G}$ and by $(\wmc^{1},\ldots,\wmc^{n})$ its
components with respect to the basis $\{\alpha_{1},\ldots,\alpha_{n}\}$. Then, for each $x_{0}\in X$, $(\mathbf{s}^{-1}(x_{0}),\wmc^{i},\mathbf{t})$ is a realization of Cartan's problem with initial data
$(n,X,C_{ij}^{k},F_i)$. 

A similar argument still holds when $A$ is not integrable. In this case, for each $x_{0}\in X$ we can
find a neighborhood $U\subset L$ of $x_{0}$ in the leaf $L$ containing it, such that the restriction of $A$ to $U$ is integrable to a Lie groupoid $\mathcal{G}\rightrightarrows U$. The Maurer-Cartan form of $\mathcal{G}$ takes values in $A\vert_{U}\hookrightarrow A$ so we can see it as an $A$-valued Maurer-Cartan
form. It is again clear that $(\mathbf{s}^{-1}(x_{0}),\wmc^{i},\mathbf{t})$ is a realization of
$(n,X,C_{ij}^{k},F_i)$.

If $(M,\theta^{i},h)$ is any realization of $(n,X,C_{ij}^{k},F_i)$, Proposition \ref{prop:morphisms} shows that the $A$-valued $1$-form $\theta\in\Omega^{1}(M,A)$ defined by
\[
\theta=\sum_{i}\theta^{i}(\alpha_{i} \circ h)
\]
satisfies the generalized Maurer-Cartan equation. 
\[ \d_{\nabla}\theta+\frac{1}{2}[\theta,\theta]_{\nabla}=0.\]
Therefore, if $p_{0}\in M$ is such that $h(p_{0})=x_{0}$ then, by the universal property of Maurer-Cartan forms, we can find a neighborhood $V$ of $p_{0}$ in $M$ and a unique diffeomorphism $\phi : V\rightarrow \phi (V) \subset \mathbf{s}^{-1}(x_{0})$ such that $\phi (p_{0})=\mathbf{1}_{x_{0}}$ and $\phi^{\ast}\wmc =\theta$. This means that any realization of Cartan's problem is locally equivalent to a neighborhood of the identity of an
$\mathbf{s}$-fiber of $\mathcal{G}$ equipped with the Maurer-Cartan form.

Finally, if $(M_1,\theta_1,h_1)$ and $(M_1,\theta_1,h_1)$ are two realizations of $(n,X,C_{ij}^{k},F_i)$ and $h_1(p_1)=h_2(p_2)=x_0$, then there exist open sets $V_i\subset M_i$ and (unique) diffeomorphisms $\phi_i : V_i\rightarrow \phi_i (V_i) \subset \mathbf{s}^{-1}(x_{0})$ such that $\phi_i(p_{i})=\mathbf{1}_{x_{0}}$ and $\phi_i^{\ast}\wmc =\theta_i$, $(i=1,2)$. Hence, the map $\phi_2^{-1}\circ\phi_1$ is a local equivalence from
$\phi_1^{-1}(\phi_1(V_1)\cap\phi_2(V_2))\subset M_1$ onto $\phi_2^{-1}(\phi_1(V_1)\cap\phi_2(V_2))\subset M_2$. The converse is obvious, since by definition an equivalence between two realizations $(M_1,\theta_1,h_1)$ and $(M_2,\theta_2,h_2)$ of $(n,X,C_{ij}^{k},F_i)$ must satisfy $h_2\circ \phi=h_1$.
\end{proof}

\subsection{Equivalence versus formal equivalence}               %
\label{subsec:formal:equiv}                                      %

Another consequence of the universal property of the Maurer-Cartan form is that ``equivalence'' and ``formal equivalence'' for a coframe actually coincide.

\begin{prop}
\label{prop: formal equivalence}
Let $\theta$ be a fully regular coframe on a manifold $M$. Any two points $p,q\in M$  are equivalent if and only if they are formally equivalent.
\end{prop}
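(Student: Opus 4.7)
The easy direction is immediate: if $\phi : U_p \to U_q$ is a local equivalence of the coframe with $\phi(p) = q$, then by the very definition of $\Inv(\theta)$ we have $f \circ \phi = f$ for every $f \in \Inv(\theta)$, so $f(q) = f(p)$ and $p$, $q$ are formally equivalent. The content of the proposition is therefore the converse.

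For the converse, my plan is to reduce it to the Local Classification Theorem (Theorem \ref{thm: classification coframes}) applied to the canonical realization $(M,\theta,\kappa_\theta)$ associated to the coframe, where $\kappa_\theta : M \to X_\theta$ is the classifying map. The key observation is that, by construction of $X_\theta$, the invariants in $\Inv(\theta)$ are (locally) functions of the components of $\kappa_\theta$, so formal equivalence of $p$ and $q$ is equivalent to $\kappa_\theta(p) = \kappa_\theta(q)$; this is exactly the content of Remark \ref{rem:formal:local:equiv} together with the construction of the classifying manifold in Section \ref{subsec:coframes:2}. Setting $x_0 := \kappa_\theta(p) = \kappa_\theta(q) \in X_\theta$, both restrictions $(U_p, \theta, \kappa_\theta)$ and $(U_q, \theta, \kappa_\theta)$ are realizations of the Cartan problem associated to $A_\theta$ with $p \mapsto x_0$ and $q \mapsto x_0$ respectively.

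Now I apply Theorem \ref{thm: classification coframes}(i): there exist neighborhoods $V_p\subset U_p$ and $V_q\subset U_q$ and diffeomorphisms
\[
\phi_p : V_p \to \phi_p(V_p)\subset \s^{-1}(x_0), \qquad \phi_q : V_q \to \phi_q(V_q)\subset \s^{-1}(x_0)
\]
onto neighborhoods of $\mathbf{1}_{x_0}$ in an $\s$-fiber of some (local) Lie groupoid integrating $A_\theta|_L$ (where $L$ is the leaf through $x_0$), such that $\phi_p(p) = \phi_q(q) = \mathbf{1}_{x_0}$ and $\phi_p^{\ast}\wmc = \theta$, $\phi_q^{\ast}\wmc = \theta$. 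Composing, $\phi := \phi_q^{-1}\circ \phi_p$ is defined on the open neighborhood $\phi_p^{-1}(\phi_p(V_p)\cap \phi_q(V_q))$ of $p$, sends $p$ to $q$, and satisfies $\phi^{\ast}\theta = \theta$; that is, it is a local equivalence of the coframe exhibiting $p \sim q$.

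The only point requiring any care is verifying that local formal equivalence really does reduce to the single scalar condition $\kappa_\theta(p) = \kappa_\theta(q)$, and that the diffeomorphism produced by Theorem \ref{thm: classification coframes} is defined on a genuine neighborhood of $p$ (as opposed to merely on a leaf). The first is handled by the fact that $\Inv(\theta)$ is closed under coframe derivatives, so that functional independence of the $h_i^\lambda$ transfers from a neighborhood of $p$ to a neighborhood of $q$, as already noted in Remark \ref{rem:formal:local:equiv}. The second is automatic since $\phi_p,\phi_q$ are produced by the Local Universal Property of Maurer-Cartan forms (Theorem \ref{Universal}), which yields genuine diffeomorphisms onto open subsets of $\s^{-1}(x_0)$, and the hypothesis that $\theta$ is \emph{fully regular} guarantees that the realization $(M,\theta,\kappa_\theta)$ has constant rank so that the theorem is directly applicable.
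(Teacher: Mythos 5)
Your proof is correct, but it follows a different route from the one in the paper. You discard the formal equivalence $\phi$ as soon as you have extracted from it the single fact $\kappa_\theta(p)=\kappa_\theta(q)$, and then you build a coframe equivalence from scratch by mapping neighborhoods of $p$ and of $q$ into the same $\s$-fiber with both base points sent to $\mathbf{1}_{x_0}$ and composing; this is exactly the argument already used for part (ii) of Theorem \ref{thm: classification coframes}, so you could in fact have cited that statement directly rather than re-deriving it from part (i). The paper instead keeps the formal equivalence $\phi:U\to V$ in play: it observes that $\phi^*(\theta|_V)$ and $\theta|_U$ are two $A_\theta$-valued $1$-forms over the \emph{same} map $\kappa_\theta$ on a contractible $U$, both satisfying the generalized Maurer--Cartan equation, and invokes Corollary \ref{cor:contrat:mnfld} (a consequence of the \emph{global} universal property) to produce a self-diffeomorphism $\varphi$ of $U$ fixing $p$ with $\varphi^*\phi^*\theta=\theta$, so that $\psi=\phi\circ\varphi$ is the desired equivalence. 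Both arguments ultimately rest on the universal property of the Maurer--Cartan form (yours via Theorem \ref{Universal}, the paper's via its global refinement), and both correctly handle the possible non-integrability of $A_\theta$ by restricting to a contractible piece of the leaf. The difference in payoff is small: the paper's version yields an equivalence that is the given formal equivalence corrected by a symmetry (so $\psi$ and $\phi$ induce the same map on $X_\theta$), whereas yours yields some equivalence carrying $p$ to $q$, which is all the proposition requires; yours is the more economical derivation given that Theorem \ref{thm: classification coframes} has already been established.
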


\begin{proof}
Clearly, if two points are equivalent then they are formally equivalent. 
For the converse, let $\phi:U\to V$ be a formal equivalence defined on contractible open sets such that $\phi(p)=q$. Denote by $A_\theta\to X_\theta$ the classifying Lie algebroid of the coframe and by $\kappa_\theta:M\to X_\theta$ the classifying map. Then, by its very definition, we find that $\kappa_\theta(U)=\kappa_\theta(V)=W$ is an open set of $X_\theta$ and that the following diagram commutes:
\[
\xymatrix{ 
U\ar[rr]^{\phi}\ar[dr]_{\kappa_\theta}&&V\ar[dl]^{\kappa_\theta}\\
&W&}
\]
Our aim is to prove that there exists a diffeomorphism $\psi:U\to V$ such that $\psi(p)=q$ and 
$\psi^*\theta|_V=\theta|_U$.

For this, observe that since $\phi$ is a formal equivalence we have $C^k_{ij}=C^k_{ij}\circ\phi$, so 
the coframe $\phi^*(\theta|_V)$ satisfies the same structure equations as $\theta|_U$:
\[ \d(\phi^*\theta^k)=\sum_{i<j}C^k_{ij}(\phi^*\theta^i)\wedge (\phi^*\theta^j). \]
It follows that we have two Lie algebroid morphisms:
\[
\xymatrix{ 
TU\ar[r]^{\phi^*\theta}\ar[d]&{A_{\theta}}|_W\ar[d]\\
U\ar[r]_{\kappa_\theta}&W
}
\qquad
\xymatrix{ 
TU\ar[r]^{\theta}\ar[d]&{A_\theta}|_W\ar[d]\\
U\ar[r]_{\kappa_\theta}&W
}
\]
We conclude that $\phi^*(\theta|_V)$ and $\theta|_U$ are both 1-forms in $\Omega^1(U,A_\theta)$  over $\kappa_\theta$ satisfying the Maurer-Cartan equation. By Corollary \ref{cor:contrat:mnfld}, we conclude that there exists a diffeomorphism $\varphi:U\to U$,  fixing $p\in U$ and commuting with $\kappa_\theta$, such that $\varphi^*(\phi^*(\theta|_V))=\theta|_U$. The map $\psi:=\phi\circ\varphi:U\to V$ is the desired equivalence mapping $p$ to $q$.
\end{proof}

\subsection{Symmetries of Realizations}                          %
\label{subsec:symmetries}                                        %

We can also use our solution to the Local Equivalence Problem to recover a few classical results about symmetries of coframes. Many of these results can be traced back to Cartan \cite{Cartan}. The formulation presented here is based on \cite{Bryant}. Note that our only purpose is to show how the classifying Lie algebroid can be used to give very simple proofs of these facts.

\begin{definition}
Let $\theta$ be a coframe on $M$. A \textbf{symmetry} of $(M,\theta)$ is a self-equivalence, i.e., a diffeomorphism $\phi: M \to M$ such that $\phi^{\ast}\theta=\theta$. An \textbf{infinitesimal symmetry} is a vector field $\xi \in \X(M)$ such that $\Lie_{\xi}\theta=0$.
\end{definition}

Clearly, the set of infinitesimal symmetries of a coframe $(M,\theta)$ form a Lie subalgebra $\X(M,\theta)\subset\X(M)$. On the other hand, we recall that the group of diffeomorphism $\Diff(M)$ is a Fr\'echet Lie group for the compact-open $C^\infty$-topology. The set of all symmetries of a coframe $(M,\theta)$ form a topological subgroup $\Diff(M,\theta)\subset \Diff(M)$ for the induced topology. 

We also have the related notions of symmetry and infinitesimal symmetry of a realization:

\begin{definition}
If $(M,\theta,h)$ is a realization of Cartan's problem we call a diffeomorphism $\phi:M\to M$ a \textbf{symmetry of the realization} if $\phi^{\ast}\theta=\theta$ \emph{and} $h\circ\phi=h$. Similarly, one calls a vector field $\xi \in \X(M)$ an \textbf{infinitesimal symmetry of the realization} if $\Lie_{\xi}\theta=0$ \emph{and} $\d h\cdot \xi=0$. 
\end{definition}

Given a realization $(M, \theta, h)$ we will denote by $\Diff(M,\theta,h)$  the group of symmetries of the realization and by $\X(M,\theta,h)$ the Lie algebra of infinitesimal symmetries of the realization. Obviously, $\Diff(M,\theta,h)$ is a subgroup of the group $\Diff(M,\theta)$ of symmetries of the underlying coframe, while $\X(M,\theta,h)$ is a Lie subalgebra of the Lie algebra $\X(M,\theta)$ of infinitesimal symmetries of the underlying coframe.

Our next result describes the relationship between the symmetries and the classifying Lie algebroid:

\begin{prop}[Theorem A.2 of \cite{Bryant}] 
\label{prop:inf:sym:coframe}
Let $(M,\theta,h)$ be a realization of a Cartan's problem with classifying Lie algebroid $A \to X$. Then the set $\X(M,\theta,h)_p$ of germs of infinitesimal symmetries of $(M,\theta,h)$ at a point $p$ is a Lie algebra isomorphic to $\gg_{h(p)}$, the isotropy Lie algebra of $A$ at $h(p)$. In particular, if $M$ is connected then $h(M)$ is an open subset of a leaf $L$ of $A$ and
\[  \dim \X(M,\theta,h)_p = \dim M - \dim L\]
\end{prop}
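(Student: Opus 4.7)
The plan is to reduce the problem to the model realization $(\s^{-1}(x_0),\wmc,\t)$ via the local classification theorem, and then identify germs of infinitesimal symmetries at the identity of the source fiber with $\gg_{x_0}$ by evaluating at $\mathbf{1}_{x_0}$. The whole argument leans on Corollary~\ref{symmetrywmc}, which pins down symmetries of $\wmc$ as right translations.

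First, set $x_0:=h(p)$ and apply Theorem~\ref{thm: classification coframes}: there is a local equivalence of realizations carrying $(M,\theta,h)$ near $p$ to $(\s^{-1}(x_0),\wmc,\t)$ near $\mathbf{1}_{x_0}$, where $\G$ is a (possibly only locally defined) Lie groupoid integrating the restriction of $A$ to the leaf $L$ through $x_0$. Since equivalences of realizations intertwine infinitesimal symmetries, the task reduces to computing the Lie algebra of germs of infinitesimal symmetries of the model at $\mathbf{1}_{x_0}$.

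Next I would define $\iota(\xi):=\xi(\mathbf{1}_{x_0})$. Since $\xi$ is tangent to $\s^{-1}(x_0)$ and satisfies $\d\t\cdot\xi=0$, its value at $\mathbf{1}_{x_0}$ lies in $T_{\mathbf{1}_{x_0}}\s^{-1}(x_0)\cap\ker\d\t = T_{\mathbf{1}_{x_0}}\G_{x_0}^{x_0}=\gg_{x_0}$. For surjectivity, given $X\in\gg_{x_0}$, the curve $g_t=\exp(tX)\subset\G_{x_0}^{x_0}$ induces a flow of right translations $R_{g_t}$ on $\s^{-1}(x_0)$ which preserves both $\wmc$ (right-invariance) and $\t$ (since $\t(hg_t)=\t(h)$); its infinitesimal generator $\xi^X$ is the desired infinitesimal symmetry with $\iota(\xi^X)=X$. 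For injectivity, if $\iota(\xi)=0$ then the flow $\phi_t$ of $\xi$ fixes $\mathbf{1}_{x_0}$ while preserving $\wmc$ and $\t$, so Corollary~\ref{symmetrywmc} yields $\phi_t=R_{g_t}$; evaluating at $\mathbf{1}_{x_0}$ forces $g_t=\mathbf{1}_{x_0}$, hence $\phi_t=\mathrm{id}$ and $\xi=0$.

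Finally, the inverse $X\mapsto\xi^X$ is the fundamental-vector-field assignment for the right action of the Lie group $\G_{x_0}^{x_0}$ on $\s^{-1}(x_0)$ by right translation, which is a Lie algebra homomorphism, so $\iota$ is also a Lie algebra isomorphism. For the last claim, the corollary at the end of Section~\ref{subsec:morphisms} shows that $h(M)$ is open in a single leaf $L$ of $A$ when $M$ is connected, and the anchor exact sequence $0\to\gg_{x_0}\to A_{x_0}\xrightarrow{\sharp}T_{x_0}L\to 0$ gives $\dim\X(M,\theta,h)_p=\dim\gg_{x_0}=n-\dim L=\dim M-\dim L$. The main subtlety to watch is that $\iota$ preserves brackets and not merely linear structure; once the right-action conventions are unpacked consistently, this is immediate from the standard fact about infinitesimal right actions, but it is the step most likely to trip up sign conventions.
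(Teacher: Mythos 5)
Your proposal is correct and follows essentially the same route as the paper: reduce to the model realization $(\s^{-1}(x_0),\wmc,\t)$ via the local universal property of the Maurer--Cartan form, then identify infinitesimal symmetries of $\wmc$ with $\gg_{x_0}$ using Corollary~\ref{symmetrywmc}. The only difference is one of detail: the paper simply invokes ``the infinitesimal version of Corollary~\ref{symmetrywmc},'' whereas you spell out that implicit step (evaluation at $\mathbf{1}_{x_0}$, surjectivity via the fundamental vector fields of the right $\G_{x_0}^{x_0}$-action, injectivity via the uniqueness of right translations), which is a welcome elaboration rather than a different argument.
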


\begin{proof}
Since the result is local, we may assume without loss of generality that the classifying Lie algebroid $A \to X$ is integrable. If this is not the case, we can restrict $A$ to a contractible open set $U$ in the leaf containing $h(p)$ so that $A|_U$ is integrable. Let $\G \tto X$ be a Lie groupoid with Lie algebroid $A$. Then, by the local universal property of the Maurer-Cartan form on $\G$ (Theorem \ref{Universal}), there is a neighborhood $U$ of $p$ in $M$ and a diffeomorphism $\phi: U \to \phi(U) \subset \s^{-1}(h(p))$ such that $\phi(p) = \mathbf{1}_{h(p)}$ and $\phi^{\ast}\wmc = \theta$.

It follows that $\X(M,\theta,h)_p$ coincides with the set of germs at $\mathbf{1}_{h(p)}$ of vector fields tangent to $\s^{-1}(h(p))$ which are infinitesimal symmetries of the Maurer-Cartan form. Thus, from (the infinitesimal version of) Corollary \ref{symmetrywmc} we may conclude that these may be identified with the isotropy Lie algebra of $A$ at $p$.
\end{proof}

The previous result applies, in particular, to the classifying algebroid $A_\theta$ of the coframe. Hence, if $M$ is connected, we see that for any $p,q\in M$ the Lie algebras $\X(M,\theta)_p$ and $\X(M,\theta)_q$ are isomorphic. In the case of coframes, the following result is classical (see Theorem 3.2 of \cite{Kobayashi}):

\begin{theorem}
\label{thm:Lie:symmetries}
Let $(M,\theta,h)$ be a realization of a Cartan's problem. Then its group of symmetries $\Diff(M,\theta,h)$ is a (finite dimensional) Lie group of tranformations of $M$ with Lie algebra $\X_c(M,\theta,h)\subset\X(M,\theta,h)$ the subspace generated by the complete infinitesimal symmetries of the realization. 
\end{theorem}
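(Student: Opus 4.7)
The plan is to exploit the rigidity of $\theta$ to realize $\Diff(M,\theta,h)$ as a subgroup of the finite-dimensional isotropy Lie group of the classifying algebroid at a chosen basepoint, and then to transfer the Lie group structure back to $\Diff(M,\theta,h)$. Assume $M$ connected. First I would prove that for any $p_0 \in M$ the evaluation map
\[ \mathrm{ev}_{p_0}: \Diff(M,\theta,h) \to M, \qquad \phi \mapsto \phi(p_0), \]
is injective. Indeed, if $\phi(p_0)=p_0$, then the composition $\psi \circ \phi$ with any local equivalence $\psi : U \to \s^{-1}(x_0)$ produced by Theorem \ref{Universal} agrees with $\psi$ at $p_0$ and satisfies $(\psi\circ\phi)^{\ast}\wmc = \phi^{\ast}\theta = \theta$; uniqueness in Theorem \ref{Universal} gives $\psi\circ\phi = \psi$ near $p_0$, hence $\phi = \mathrm{id}$ near $p_0$. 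The set on which $\phi = \mathrm{id}$ is then both open and closed in $M$, and therefore equals $M$.

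Next, set $x_0 := h(p_0)$ and let $\G \tto X$ be a Lie groupoid integrating $A$ (or $A|_L$ on a neighborhood of $x_0$ in its leaf, if $A$ is only weakly integrable). By Theorem \ref{thm: classification coframes} pick a local equivalence $\psi : U \to \s^{-1}(x_0)$ with $\psi(p_0) = \mathbf{1}_{x_0}$ and $\psi^{\ast}\wmc = \theta$. Corollary \ref{symmetrywmc} then identifies every germ at $p_0$ of a symmetry of $(M,\theta,h)$ with $\psi^{-1}\circ R_g \circ \psi$ for some $g$ in the isotropy Lie group $\G_{x_0}$. Combined with the rigidity from the previous step, this yields an injective group homomorphism
\[ \Phi : \Diff(M,\theta,h) \to \G_{x_0}, \]
whose image $H$ consists of those $g$ for which the locally defined symmetry $\psi^{-1}\circ R_g \circ \psi$ extends to a global diffeomorphism of $M$ preserving $\theta$ and $h$.

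Finally, I would endow $\Diff(M,\theta,h)$ with the compact-open $C^\infty$ topology inherited from $\Diff(M)$ and show that $\Phi$ is a homeomorphism onto $H$, with $H$ closed in $\G_{x_0}$; Cartan's closed-subgroup theorem then yields the desired finite-dimensional Lie group structure. The Lie algebra is then identified with $\X_c(M,\theta,h)$: each $1$-parameter subgroup $t \mapsto \phi_t$ of $\Diff(M,\theta,h)$ is, by definition, the global flow of a complete infinitesimal symmetry of the realization, so the Lie algebra is generated by such vector fields; conversely, every complete $\xi \in \X(M,\theta,h)$ integrates to a $1$-parameter subgroup inside $\Diff(M,\theta,h)$. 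The main obstacle I foresee is verifying that $H$ is closed in $\G_{x_0}$: one must show that a sequence of global coframe-preserving symmetries whose germs at $p_0$ converge in $\G_{x_0}$ converges in $C^\infty_{\mathrm{loc}}$ to a global coframe-preserving symmetry. This should follow from the parallelism $\theta$, which determines all higher derivatives of any symmetry at any point from its $1$-jet at $p_0$ via the ODE structure implicit in $\phi^{\ast}\theta = \theta$.
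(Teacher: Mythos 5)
Your first step (injectivity of $\mathrm{ev}_{p_0}$ via the uniqueness clause of Theorem \ref{Universal} plus an open--closed argument) is sound and is essentially the same rigidity that the paper exploits. But the second step has a genuine gap: the map $\Phi(\phi)=\psi(\phi(p_0))$ is simply not defined when $\phi(p_0)$ lies outside the domain $U$ of the \emph{local} equivalence $\psi$, and a global symmetry can certainly move $p_0$ out of $U$. To assign an element of $\G_{x_0}$ to the germ of $\phi$ at $p_0$ you would need a local equivalence $\psi'$ defined near $\phi(p_0)$, but Theorem \ref{Universal} produces one only after an arbitrary choice of base arrow $g_0$ with $\t(g_0)=h(\phi(p_0))$; different choices change the resulting $g$ by an element of $\G_{x_0}$, so $\Phi$ is only well defined after developing along a path from $p_0$ to $\phi(p_0)$, i.e.\ only modulo the image of $\pi_1(M,p_0)$ under the morphism $F$ of Theorem \ref{thm: global universal property}. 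This is not a removable technicality: take $M=S^1$ with the coframe $\theta=\d\vartheta$, so $X=\{*\}$, $A=\Rr$ abelian, and $\G=\Rr$ its source-simply-connected integration. Then $\Diff(M,\theta,h)\cong S^1$ (the rotations), and there is \emph{no} injective group homomorphism $S^1\to\Rr=\G_{x_0}$. So the intermediate claim on which your whole construction rests can fail. (The issues you flag later --- closedness of $H$, joint smoothness of the action, the anti-homomorphism sign --- are all downstream of this.)

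The paper avoids the global developing problem entirely by working infinitesimally: it invokes Palais's theorem, which says that a subgroup $G\subset\Diff(M)$ is a Lie transformation group with Lie algebra $\gg$ as soon as the Lie algebra generated by the complete vector fields whose flows lie in $G$ is finite dimensional. The whole proof then reduces to showing that the restriction map $\X(M,\theta,h)\to\X(M,\theta,h)_p$ to germs at a point is injective (an infinitesimal symmetry vanishing at a point vanishes identically, proved by the same open--closed argument you use, since $\Lie_{X_i}X=0$ for the dual frame $\{X_i\}$), combined with Proposition \ref{prop:inf:sym:coframe}, which identifies the germ algebra with the finite-dimensional isotropy Lie algebra $\gg_{h(p)}$. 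Note that Proposition \ref{prop:inf:sym:coframe} only ever needs the \emph{local} universal property, so no global integrability or monodromy issues arise. If you want to salvage your global approach, you would have to either quotient $\G_{x_0}$ by the monodromy image $F(\pi_1(M,p_0))$ and prove that this quotient is a Lie group, or replace $\G$ by a groupoid adapted to $M$ itself; both are substantially harder than the infinitesimal route.
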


\begin{proof}
We can assume that $M$ is connected. A classical theorem of Palais (Theorem IV.III of \cite{Palais}) states that any group $G\subset \Diff(M)$ such that the Lie subalgebra $\gg\subset\X(M)$ generated by the complete vector fields $X$ whose flows $\phi^t_X\in G$ is finite dimensional, is actually a Lie group of tranformations of $M$ with Lie algebra $\gg$. Hence, by Proposition \ref{prop:inf:sym:coframe}, it is enough to prove that for any $p\in M$ the restriction map $\X(M,\theta,h)\to \X(M,\theta,h)_p$ is injective. This follows from the following lemma:

\begin{lemma}
Let $X\in\X(M,\theta,h)$ be an infinitesimal symmetry and assume that $X$ vanishes at some point $p\in M$. Then $X\equiv 0$.
\end{lemma}

To prove this lemma, it is enough to to prove that the zero set of $X\in \X(M,\theta,h)$ is both open and closed. It is obviously closed, and to prove that it is open let $q\in M$ be such that $X_q=0$. If $\{X_i\}$ are the linearly independent vector fields dual to the coframe $\{\theta^i\}$, then:
\[ \Lie_X\theta^i=0\quad \Leftrightarrow \quad \Lie_X X_i=0.\]
Hence,  we have that:
\[ X_q=0 \text{ and } \Lie_{X_i}X=0.\]
Since the $X_i$ are everywhere linearly independent, we conclude that $X$ vanishes in a neighborhood of $q$.
\end{proof}

We also have the following semi-global symmetry property of a realization:

\begin{corol}[Theorem A.3 of \cite{Bryant}]
Let $(n, X, C^k_{ij}, F_i)$ be the initial data of a realization problem with associated Lie algebroid $A \to X$. Let $L \subset X$ be a leaf of $A$ whose isotropy Lie algebra is $\gg$, and let $G$ be any Lie group with Lie algebra $\gg$. Then over any contractible set $U \subset L$, there exists a principal $G$-bundle $h: M \to U$ and a $G$-invariant coframe $\theta$ of $M$ such that $(M, \theta, h)$ is a realization of the Cartan's problem. Moreover, this realization is locally unique up to isomorphism.
\end{corol}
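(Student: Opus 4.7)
The plan is to construct $M$ as a source-fiber of a Lie groupoid integrating $A|_U$, equipped with the coframe inherited from the restriction of the Maurer-Cartan form. Source fibers of a transitive Lie groupoid are naturally principal bundles over the base, and the Maurer-Cartan form is, by construction, invariant under right translation by the isotropy; so both the principal bundle structure and the $G$-invariance of the coframe come for free from this construction.

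Concretely, since $U$ is a contractible open subset of a leaf, the restriction $A|_U$ is integrable, as already used in the proof of Theorem \ref{thm: classification coframes}. Let $\tilde{\G} \tto U$ denote its source simply connected integration; the isotropy at a chosen $x_0 \in U$ is a connected, simply connected Lie group $\tilde{G}$ with Lie algebra $\gg$. To realize the prescribed $G$, whose identity component may be written $G_0 = \tilde{G}/\Gamma$ for some discrete central subgroup $\Gamma \subset \tilde{G}$, I would form the quotient groupoid $\G := \tilde{\G}/\Gamma$, which still integrates $A|_U$ and now has isotropy $G_0$. Setting $M := \s^{-1}(x_0) \subset \G$ and $h := \t|_M$, the right action of $G_0$ on $M$ by groupoid multiplication is free and proper with orbits the $\t$-fibers, so $h: M \to U$ is a principal $G_0$-bundle. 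The components of $\wmc|_M$ in the canonical frame of $A = X \times \Rr^n$ then form the coframe $\theta$, and $(M, \theta, h)$ is a realization of Cartan's problem by the same computation used in the proof of Theorem \ref{thm: classification coframes}; right invariance of $\wmc$ immediately yields $G_0$-invariance of $\theta$. If $G$ is disconnected, one replaces $M$ by the associated bundle $M \times_{G_0} G$ in the obvious way.

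For local uniqueness, suppose $(M', \theta', h')$ is another realization with $h'$ a principal $G$-bundle over $U$, and fix $p' \in M'$ with $h'(p') = x_0$. By Theorem \ref{thm: classification coframes}, there is an equivalence of realizations between a neighborhood of $p'$ and a neighborhood of $\mathbf{1}_{x_0} \in M$. The main delicate point, which I expect to be the principal obstacle, is promoting this to a global $G$-equivariant isomorphism of principal bundles over $U$. The tool is Corollary \ref{symmetrywmc}: any symmetry of the Maurer-Cartan form between source fibers is a right translation by an element of $\G$, so the local equivalence can be propagated along the $G$-orbit through $p'$ to a $G$-equivariant equivalence over a saturated neighborhood of $x_0$ in $U$; the uniqueness clause of Theorem \ref{Universal}, together with contractibility (hence triviality of $h'$) of $U$, then allows this equivariant equivalence to be extended consistently to all of $M'$.
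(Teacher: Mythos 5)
Your proof follows essentially the same route as the paper's: integrate $A|_U$ (which is integrable since $U$ is contractible and $A|_U$ is transitive) to a Lie groupoid with isotropy group $G$, take a source fiber with the restriction of the Maurer-Cartan form as the realization, and invoke Theorem \ref{thm: classification coframes} for local uniqueness. The paper simply asserts the existence of an integration with prescribed isotropy $G$, whereas you construct it explicitly as a quotient of the source simply connected integration by a central discrete subgroup; this is a correct filling-in of a detail the paper leaves implicit, not a different argument.
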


\begin{proof}
This is an immediate consequence of our classification result, Theorem \ref{thm: classification coframes}. In fact, since $U$ is contractible and the restriction of $A$ to $U$ is transitive, it follows that $A\vert_U$ is integrable. Moreover, for any Lie group $G$ with Lie algebra $\gg$, there exists a Lie groupoid $\G$ integrating $A|_{U}$ whose isotropy Lie group is isomorphic to $G$. The restriction of the Maurer-Cartan form of $\G$ to any $\s$-fiber furnishes the desired (locally unique) $G$-invariant coframe.  
\end{proof}

We can also use Theorem \ref{thm: classification coframes} to explain the relationship between the classifying algebroid $A\to X$ of a Cartan realization problem and the classifying algebroid of a coframe $\theta$ associated with a realization $(M,\theta,h)$:

\begin{corol}
\label{cor:subalgbrd}
Let $(n,X,C_{ij}^k,F_i)$ be a Cartan realization problem with associated classifying algebroid $A$ and let $(M,\theta,h)$ be a connected realization. Then $h(M)$ is an open subset of an orbit of $A$ and there is a Lie algebroid morphism from the Lie subalgebroid $A|_{h(M)}\subset A$ to $A_\theta\to X_\theta$ (the classifying algebroid of the coframe $\theta$) which is a fiberwise isomorphism that covers a surjective submersion. 
\end{corol}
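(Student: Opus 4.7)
The plan is to break the corollary into three tasks: (i) showing $h(M)$ lies in a single orbit of $A$ and is open therein, (ii) constructing the surjective submersion $\bar h \colon h(M) \to X_\theta$, and (iii) lifting $\bar h$ to the required Lie algebroid morphism $\Phi \colon A|_{h(M)} \to A_\theta$.

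Task (i) is immediate from the corollary to Proposition \ref{prop:morphisms}: the realization produces a Lie algebroid morphism $(h,\theta) \colon TM \to A$ that is a fiberwise isomorphism, so $h$ has locally constant rank equal to the dimension of the orbit through $h(M)$; connectedness of $M$ then forces $h(M)$ to be an open subset of a single orbit $L \subset X$. For task (ii), I define $\bar h$ as the unique map satisfying $\bar h \circ h = \kappa_\theta$. The crucial content is the implication $h(p) = h(q) \Rightarrow \kappa_\theta(p) = \kappa_\theta(q)$. This will follow by chaining two earlier results: by Theorem \ref{thm: classification coframes}(ii), the germs of the realization $(M,\theta,h)$ at $p$ and $q$ are locally equivalent \emph{as realizations}; in particular, the underlying germs of $\theta$ at $p$ and $q$ are equivalent as coframes; and by Proposition \ref{prop: formal equivalence}, equivalence of the coframe $\theta$ coincides with local formal equivalence, which is precisely the relation collapsed by $\kappa_\theta$. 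Smoothness, surjectivity and the submersion property of $\bar h$ then follow because $h \colon M \to h(M)$ is a smooth submersion (by (i)) and $\kappa_\theta$ is a smooth surjective submersion.

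For task (iii), both $A = X \times \Rr^n$ and $A_\theta = X_\theta \times \Rr^n$ come with distinguished global frames $\{\al_1,\dots,\al_n\}$ and $\{\be_1,\dots,\be_n\}$ determined by the Cartan realization data. I define $\Phi$ over $\bar h$ by $\Phi(\al_i(y)) = \be_i(\bar h(y))$, which is tautologically a fiberwise isomorphism. The Lie algebroid morphism axioms then reduce to two identities on $h(M)$. The anchor identity $\d \bar h (F_i(y)) = \tilde F_i(\bar h(y))$, where $\tilde F_i := \sharp_{A_\theta} \be_i$, follows by differentiating $\bar h \circ h = \kappa_\theta$ and comparing the expansions $\d h = \sum_i (F_i \circ h)\theta^i$ and $\d \kappa_\theta = \sum_i (\tilde F_i \circ \kappa_\theta)\theta^i$. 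The bracket identity $C_{ij}^k(y) = \tilde C_{ij}^k(\bar h(y))$ follows from observing that the structure functions of the coframe $\theta$ on $M$ admit the two descriptions $C_{ij}^k \circ h$ and $\tilde C_{ij}^k \circ \kappa_\theta = (\tilde C_{ij}^k \circ \bar h) \circ h$, and then using surjectivity of $h \colon M \to h(M)$.

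The main obstacle is the implication in task (ii), namely $h(p) = h(q) \Rightarrow \kappa_\theta(p) = \kappa_\theta(q)$, which is the bridge between \emph{equivalence of realizations} (controlled by $A$) and \emph{equivalence of coframes} (controlled by $A_\theta$). Establishing this bridge is not formal: it genuinely requires both the local classification theorem and the identification of coframe equivalence with formal equivalence. Once it is available, the remainder is a mechanical verification on trivial vector bundles with chosen global frames.
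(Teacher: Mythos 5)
Your proof is correct and follows essentially the same route as the paper's: openness of $h(M)$ in an orbit from the morphism property of $(h,\theta)$, well-definedness of the induced base map via Theorem \ref{thm: classification coframes} together with the (easy direction of the) passage from genuine to formal equivalence of coframes, and the bundle map determined by identifying the canonical frames of the two trivial algebroids. The only cosmetic difference is that you verify anchor and bracket compatibility of $\Phi$ by explicit computation in these frames, whereas the paper obtains the same map by composing the two fiberwise-isomorphic Lie algebroid maps out of $TM$ that cover the two legs $h$ and $\kappa_\theta$.
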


\begin{proof}
Since $M$ is connected, $h(M)$ is an open subset of an orbit of $A$. Also, we have a two leg diagram:
\[
\xymatrix{ & M\ar[dl]_h\ar[dr]^{\kappa_\theta}\\
h(M)\ar@{-->}[rr]_{\psi}&& X_\theta} 
\]
Observe that if $h(p)=h(q)$ then Theorem \ref{thm: classification coframes} gives a locally defined diffeomorphism $\phi:M\to M$, such that $\phi(p)=q$ and $\phi^*\theta=\theta$. In other words, $p$ and $q$ are locally formally equivalent so that we must have $\kappa_\theta(p)=\kappa_\theta(q)$. This shows that we have a well defined map $\psi:h(M)\to X_\theta$ that makes the diagram above commutative. Since $\kappa_\theta$ is a surjective submersion, we conclude that $\psi:h(M)\to X_\theta$ is also a surjective submersion.

Finally, observe that the coframe yields Lie algebroid maps
\[
\xymatrix{ & TM\ar[dl]\ar[dr]\\
A|_{h(M)}\ar@{-->}[rr]&& A_\theta} 
\]
which cover the two legs in the diagram above, and which are fiberwise isomorphism. Hence, we conclude that there is a Lie algebroid morphism $A|_{h(M)}\to A_\theta$,
over the map $\psi:h(M)\to X_\theta$, which is a fiberwise isomorphism.
\end{proof}

\begin{remark}
\label{rmk: extra invariants}
The corollary above can be interpreted as saying that the realization problem associated to a coframe is the one that has the least amount of invariant functions involved (and thus the largest symmetry group). On the other hand, a general realization problem deals with a coframe along with extra invariants that must be preserved.
\end{remark}

The following example illustrates the point made in this remark. 

\begin{example}
Suppose that $\Psi: G \times X \to X$ is an action of an $n$-dimensional Lie group $G$ on a manifold $X$, and let  $\psi: \gg \to \X(X)$ be the associated infinitesimal action. Let us denote by $A = \gg\ltimes X$ the corresponding transformation Lie algebroid. Since $A$ is a trivial vector bundle, it determines a realization problem. In fact, if $e_1, \ldots, e_n$ is a basis of $\gg$ such that 
\[[e_i,e_j] = C^k_{ij}e_k \text{ and } F_i = \psi(e_i),\]
then $(n,X,C^k_{ij},F_i)$ is the initial data to a realization whose classifying Lie algebroid is $A$. 

It then follows from Theorem \ref{thm: classification coframes} that every solution to this realization problem is locally equivalent to a neighborhood of the identity in $(G, \wmc^i, \Psi_x)$, where $\wmc^i$ are the components of the Maurer-Cartan form on $G$ with respect to the basis $e_1, \ldots, e_n$ of $\gg$ and 
\[\Psi_x: G \to X, \quad \Psi_x(g) = \Psi(g,x).\]
Thus, the infinitesimal symmetry algebra of a realization that corresponds to a point $x \in X$ is the isotropy Lie algebra of the infinitesimal action, i.e., $\X(G, \wmc^i, \Psi_x)_{\mathrm{Id}} = \ker \psi_x$. Moreover, a map $\phi: G \to G$ which preserves the Maurer-Cartan form is an equivalence, if and only if it leaves the action of $G$ on the orbit through $x$ invariant. In other words, it must be the right translation by an element of the isotropy subgroup $G_x$. 

On the other hand, the classifying Lie algebroid of the Maurer-Cartan coframe on $G$ is the Lie algebra $\gg$ itself. The symmetries of the Maurer-Cartan coframe on $G$ are the right translations by any element of $G$.
\end{example}

\section{Global Equivalence}                                     %
\label{sec:global}                                               %

We now turn to global questions related to equivalence of coframes. Namely, we will give our solution to the Globalization and to the Global Equivalence Problems

\subsection{The Globalization Problem}                           %
\label{subsec: Full Realizations}                                %

Recall that the \emph{globalization problem} (see Problem \ref{prob:globalization}) asks if two germs of coframes $\theta_0$ and $\theta_1$ which solve the sames realization problem are germs of the same global realization. 

We recall that we say that a Lie algebroid $A$ is \emph{weakly integrable} if the restriction of $A$ to any orbit is integrable. Then we have:

\begin{theorem}
\label{thm: globalization}
Consider a Cartan problem whose associated classifying Lie algebroid $A\to X$ is weakly integrable. Then $(M_0,\theta_0,h_0)$ and $(M_1,\theta_1,h_1)$ are germs of the same global connected realization $(M,\theta, h)$ if and only if they correspond to points on $X$ in the same orbit of $A$.
\end{theorem}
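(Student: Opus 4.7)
The plan is to prove the two directions separately.

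For \emph{necessity}, I would invoke the Corollary at the end of Section \ref{subsec:morphisms}, which asserts that for any realization $(M,\theta,h)$ with $M$ connected, $h(M)$ is an open subset of a single orbit of $A$. Hence if the germs $(M_i,\theta_i,h_i)$ at $p_i$ ($i=0,1$) are both germs of the same connected realization $(M,\theta,h)$, say at points $q_0, q_1 \in M$, then $h_0(p_0) = h(q_0)$ and $h_1(p_1) = h(q_1)$ both lie in $h(M)$, and hence in a common orbit of $A$.

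For \emph{sufficiency}, assume $x_0 := h_0(p_0)$ and $x_1 := h_1(p_1)$ lie in a common orbit $L$ of $A$. By weak integrability, the restriction $A|_L$ is integrable; I take $\G_L \tto L$ to be its \emph{source 1-connected} integration, so that every $\s$-fiber is connected. Then I put
\[ M := \s^{-1}(x_0), \quad \theta := \wmc|_{M}, \quad h := \t|_{M}, \]
where $\wmc$ is the Maurer-Cartan form of $\G_L$ and $\theta$ is the coframe whose components are those of $\wmc$ relative to the canonical basis $\{\al_1,\dots,\al_n\}$ of sections of $A$. As noted in the proof of Theorem \ref{thm: classification coframes}, $(M,\theta,h)$ is a realization of the Cartan problem, and it is connected by choice of $\G_L$.

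Since $x_0$ and $x_1$ both lie in $L$ and $\G_L$ is source-connected, there exists an arrow $g \in \s^{-1}(x_0)$ with $\t(g) = x_1$. In the realization $(M,\theta,h)$, the point $\mathbf{1}_{x_0}$ has classifying value $x_0$ while $g$ has classifying value $x_1$. By Theorem \ref{thm: classification coframes}(ii), the germ of $(M,\theta,h)$ at $\mathbf{1}_{x_0}$ is locally equivalent to $(M_0,\theta_0,h_0)$ at $p_0$, and the germ of $(M,\theta,h)$ at $g$ is locally equivalent to $(M_1,\theta_1,h_1)$ at $p_1$. This exhibits both germs as germs of the single global connected realization $(M,\theta,h)$, completing the proof.

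The main obstacle I anticipate is the handling of connectedness in the chosen $\s$-fiber: an arbitrary Lie groupoid integrating $A|_L$ may have disconnected $\s$-fibers, in which case there is no a priori guarantee that the arrow $g$ witnessing ``$x_0$ and $x_1$ lie in the same orbit'' belongs to the same component as $\mathbf{1}_{x_0}$. Using the source 1-connected integration resolves this cleanly. A secondary point worth checking is that the local equivalences produced by Theorem \ref{thm: classification coframes} are \emph{equivalences of realizations} (matching the classifying maps), not merely of the underlying coframes, which is indeed built into that theorem.
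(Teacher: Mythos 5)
Your proof is correct. The sufficiency direction (same orbit implies germs of a common connected realization) is essentially the paper's own argument: integrate $A|_L$, take the $\s$-fiber $\s^{-1}(x_0)$ with the Maurer--Cartan form and target map as the global realization, pick an arrow $g$ with $\t(g)=x_1$, and match germs via Theorem \ref{thm: classification coframes}(ii); your explicit insistence on a source-connected (indeed source $1$-connected) integration is a worthwhile refinement that the paper leaves implicit, since the statement requires the ambient realization to be connected. Where you genuinely diverge is in the necessity direction: you simply cite the Corollary following Proposition \ref{prop:morphisms}, which already asserts that a connected realization has $h(M)$ contained in (indeed, open in) a single orbit, so both classifying values land in one orbit at once. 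The paper instead reproves this from scratch inside the proof of Theorem \ref{thm: globalization}, joining $p_0$ to $p_1$ by a curve, covering it by finitely many open sets on each of which $\theta$ is pulled back from a Maurer--Cartan form, and running an induction on the number of open sets. Your route is shorter and reuses an established result (and, like the paper's, needs no weak integrability for this direction); the paper's longer argument has the mild virtue of being self-contained and of making visible the chaining mechanism along a path, but it buys nothing logically that the Corollary does not already provide.
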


\begin{proof}
Given the two germs $(M_0,\theta_0,h_0)$ and $(M_1,\theta_1,h_1)$, suppose that they correspond to points $x_0$ and $x_1$ on the same orbit $L$ of $A$: $h_0(m_0)=x_0$ and $h_1(m_1)=x_1$. Since $A$ is weakly integrable, there exists a Lie groupoid $\G_L$ integrating $A|_L$ and the $\s$-fiber at $x_0$ contains a point $g$ with $\t(g) = x_1$. Thus, the germ of $(M_0,\theta_0,h_0)$ at $m_0$ can be identified with the germ of $(\s^{-1}(x_0),\wmc,\t)$ at $\mathbf{1}_{x_0}$ and the germ of $(M_1,\theta_1,h_1)$ at $m_1$  can be identified with the germ of $(\s^{-1}(x_0),\wmc,\t)$ at $g$. We conclude that $(M_0,\theta_0,h_0)$ and $(M_1,\theta_1,h_1)$ are both germs of the realization $(\s^{-1}(x_0), \wmc, \t)$.

Conversely, suppose that there exists a connected realization $(M,\theta,h)$ such that $(M_0,\theta_0,h_0)$ and $(M_1,\theta_1,h_1)$ are the germs of $\theta$ at points $p_0$ and $p_1$ of $M$, respectively. Let $\gamma$ be a curve joining $p_0$ to $p_1$ and cover it by a finite family $U_1, \ldots, U_k$ of open sets of $M$ with the property that the restriction of $\theta$ to each $U_i$ is equivalent to the restriction of the Maurer-Cartan form to an open set of some $\s$-fiber of $\G$, i.e., $\theta |_{U_i} = \phi_i^{\ast}\wmc$ for some diffeomorphism $\phi_i: U_i \to \phi_i(U_i) \subset \s^{-1}(x_i)$. 

We proceed by induction on the number of open sets needed to join $p_0$ to $p_1$. Suppose that both $p_0$ and $p_1$ belong to the same open set $U_1$. Then $\phi_1(p_0)$ and $\phi_1(p_1)$ are both on the same $\s$-fiber. Thus, $h(p_0) = \t \circ \phi_1(p_0)$ belongs to the same orbit as $h(p_1) = \t \circ \phi_1(p_1)$. Now assume that the result is true for $k-1$ open sets. Then any point $q$ in $U_{k-1}$ is mapped by $h$ to the same orbit of $h(p_0)$. Let $q$ be a point in $U_{k-1} \cap U_{k}$. Then, on one hand, since $q$ and $p_1$ belong to $U_k$ it follows that $h$ maps them both to the same orbit of $A$. On the other hand, by the inductive hypothesis, it follows that $h$ also maps $p_0$ and $q$ to the same orbit of $A$.
\end{proof}

\begin{remark}
The proposition above may not be true when $A$ is not weakly integrable. The problem is that in this case, the global object associated to $A_L$ is only a topological groupoid which is smooth only in a neighborhood of the identity section. Thus, if $x, y \in X$ are points in the same orbit which are ``too far away'', then we might not be able to find a \emph{differentiable} realization ``covering'' both points at the same time. 
\end{remark}

Motivated by this remark, it is natural to consider the problem of existence of realizations $(M, \theta,h)$ of a Cartan's problem, such that the image of $h$ is the whole leaf of the classifying Lie algebroid.

\begin{definition}
A connected realization $(M, \theta, h)$ is called \textbf{full} if $h$ is surjective onto the orbit of $A$ that it ``covers''. 
\end{definition}

The paradigmatic example of a full realization is obtained as follows.

\begin{example}
\label{ex: full}
Assume that the classifying Lie algebroid $A$ is weakly integrable. Then the $\s$-fibers of the Weinstein groupoid $\G(A)$ are smooth manifolds
and carry (the restriction of) the Maurer-Cartan form. Each triple $(\s^{-1}(x),\wmc,\t)$ is a full realization.
\end{example}

In fact, there is a partial converse to the previous example. To explain this we also need the following:

\begin{definition}
A realization $(M, \theta, h)$ of a Cartan's problem is said to be \textbf{complete} if it is a full realization and every local equivalence $\phi:U\to V$, defined on open sets $U,V\subset M$ can be extended to a global equivalence $\tilde{\phi}:M\to M$, such that $\tilde{\phi}|_U=\phi$.
\end{definition}

\begin{example}[(Example \ref{ex: full} continued)]
By Corollary \ref{symmetrywmc}, every local equivalence of the realization $(\s^{-1}(x),\wmc,\t)$ is the restriction of right translation by same element $g\in\G(A)$. Hence, this is an example of a complete realization.
\end{example}

Now we can state the following characterization (an analogous situation occurs in \cite{Schwachhofer:integration}):

\begin{prop}
Let $A \to X$ be the classifying Lie algebroid of a Cartan's realization problem, and let $L \subset X$ be an orbit of $A$. Then there exists a complete realization over $L$ if and only if the restriction $A\vert_L$ is integrable.
\end{prop}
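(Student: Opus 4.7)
The plan is to treat the two directions separately.

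For the direction ``$A|_L$ integrable $\Rightarrow$ a complete realization exists'', I would take $\G\tto L$ to be the source 1-connected integration of $A|_L$ and claim that $(\s^{-1}(x_0),\wmc,\t)$ is a complete realization for any $x_0\in L$. Transitivity of $A|_L$ forces $\t:\s^{-1}(x_0)\to L$ to be surjective, so the realization is full. For completeness, given a local self-equivalence $\phi:U\to V$ of $\s^{-1}(x_0)$ with $\phi^{*}\wmc=\wmc$ and $\t\circ\phi=\t$, I fix a point $p_0$ in a connected component of $U$ and set $g:=p_0^{-1}\phi(p_0)$. Then $R_g$ is defined on all of $\s^{-1}(x_0)$, preserves $\wmc$ by right invariance and preserves $\t$, and satisfies $R_g(p_0)=\phi(p_0)$; by the uniqueness part of Theorem \ref{Universal}, $R_g$ extends $\phi$ on this component.

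For the converse, suppose $(M,\theta,h)$ is a complete realization over $L$ and set $G:=\Diff(M,\theta,h)$. The strategy is to exhibit $h:M\to L$ as a principal $G$-bundle and then identify $A|_L$ with its Atiyah algebroid. By Theorem \ref{thm:Lie:symmetries}, $G$ is a finite-dimensional Lie group. The $G$-action is free: if $\phi(p)=p$ then $\phi^{*}\theta=\theta$ together with the fact that $\theta_p:T_pM\to A_{h(p)}$ is an isomorphism forces $d\phi_p=\mathrm{id}$, whence $\phi=\mathrm{id}$ by the uniqueness in Theorem \ref{Universal}. The action is transitive on the fibers of $h$: given $p_1,p_2$ with $h(p_1)=h(p_2)$, Theorem \ref{thm: classification coframes}(ii) yields a local self-equivalence sending $p_1$ to $p_2$, which completeness promotes to a global element of $G$. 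Since $h$ is a surjective submersion (fullness together with $dh=\sharp\circ\theta$) whose fibers are precisely the $G$-orbits, this gives the principal bundle structure.

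To conclude, I use $\theta$ as a bridge between $A|_L$ and the Atiyah algebroid $TM/G$ of this principal bundle. By Proposition \ref{prop:morphisms}, $\theta:TM\to A|_L$ is a Lie algebroid morphism that is a fiberwise isomorphism, and it is $G$-equivariant since every $\phi\in G$ satisfies $\phi^{*}\theta=\theta$. It therefore descends to a Lie algebroid morphism $\bar{\theta}:TM/G\to A|_L$ covering $\mathrm{id}_L$ which remains a fiberwise isomorphism, hence is a Lie algebroid isomorphism. Since the Atiyah algebroid $TM/G$ is always integrated by the gauge groupoid $(M\times M)/G$, we conclude that $A|_L$ is integrable. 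The main technical point, which deserves some care, is verifying the smooth principal bundle structure in the converse direction: one needs the $G$-action to be proper so that the quotient $M/G=L$ is smooth, and this follows from $G$ being finite-dimensional with orbits agreeing with the fibers of the submersion $h$, which are closed embedded submanifolds.
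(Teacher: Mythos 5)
Your proof is correct and follows essentially the same route as the paper: the $\s$-fiber of an integrating groupoid equipped with the Maurer--Cartan form (made complete via right translations and the uniqueness in Theorem \ref{Universal}) for one direction, and for the converse the symmetry group $G=\Diff(M,\theta,h)$ acting freely and transitively on the fibers of $h$, exhibiting $A|_L$ as the Atiyah algebroid of the resulting principal bundle. The only cosmetic difference is in the freeness argument, where the paper shows the fixed-point set of a symmetry is open using the flows of the frame fields dual to $\theta$, while you invoke the local uniqueness of Theorem \ref{Universal}; both arguments reduce to the same open-closed propagation on the connected manifold $M$.
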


\begin{proof}
Assume that $A\vert_L$ is integrable by $\G \tto L$. Then, as observed above, for any $x \in L$, the realization $(\s^{-1}(x),\wmc,\t)$ is complete.

Conversely, let $(M,\theta,h)$ be a complete realization which covers $L$. By Theorem \ref{thm:Lie:symmetries}, the group of symmetries $G=\Diff(M,\theta,h)$ is a Lie group of transformations of $M$ which fixes the fibers of $h:M\to L$. If $x,y\in M$ are such that $h(x)=h(y)$ then, by Theorem \ref{thm: classification coframes}, there exists a local equivalence $\phi:U\to V$ such that $\phi(x)=y$. By the completeness of the realization, $\phi$ can be extended to a global equivalence $\tilde{\phi}\in G$. We conclude that $G$ acts transitively on the fibers of $h$. 

Now we claim that the action of $G$ on $M$ is free. This will follow by showing that the fixed point set of a symmetry $\phi:M\to M$ is both open and closed. It is obviously closed. In order to prove that it is also open, we consider the vector fields $\{X_i\}$ dual to the coframe $\{\theta^i\}$ and denote by $\phi^t_{X_i}$ their (local) flows. Then, since $\phi$ is a symmetry, we have:
\[ \phi_*X_i=X_i\quad \Leftrightarrow \quad \phi\circ\phi^t_{X_i}=\phi^t_{X_i}\text{ (whenever defined)}.\]
Therefore, if $p\in M$ is some fixed point of $\phi$, it follows that $\phi^t_{X_i}(p)$ is also a fixed point of $\phi$. Since the $X_i$ are linearly independent everywhere, it follows that there is neighborhood of $p$ consisting of fixed points. This shows that the fixed point set of $\phi$ is open, and the claim follows.

We conclude that the $G$-action on $M$ is free and its orbits are the fibers of $h:M\to L$, so that we have a principal bundle
\[\xymatrix{M \ar[d]_{h} & \ar@(dr,ur)@<-6ex>[]_{G}\\
L}
\]
The Atiyah algebroid of this principal bundle is isomorphic to $A\vert_L$. Hence, $A\vert_L$ is integrable as claimed.
\end{proof}

\subsection{The Global Classification Problem}                   %
\label{subsec: Global Results}                                   %

We now turn to the global classification of coframes (Problem \ref{prob:global:equivalence}). For the remainder of this section, $(n, X, C^k_{ij}, F_i)$ denotes a realization problem with classifying Lie algebroid $A \to X$ and $(M,\theta,h)$ is a realization.

The global equivalence of realizations of a Cartan's problem is much more delicate than the question of local equivalence. The reason is that the classifying Lie algebroid will not distinguish between a realization and its covering. For example, Olver in \cite{Olver:Lie} constructs a simply connected manifold with a Lie algebra valued Maurer-Cartan form which cannot be globally embedded into any Lie group, but which is locally equivalent at every point to an open set of a Lie group.

We have the following theorem (the special cases of coframes of rank zero or maximal rank were considered by Olver in \cite{Olver}):

\begin{theorem} 
\label{thm: global equivalence up to cover}
Let $(M, \theta, h)$ be a realization of a Cartan problem and suppose that the classifying Lie algebroid $A \to X$ is weakly integrable. Then $M$ is globally equivalent up to a cover to an open set of an $\s$-fiber of a groupoid $\G$ integrating $A$.
\end{theorem}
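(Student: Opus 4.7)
The plan is to construct the common realization cover via the graph of the integrable distribution used in the proof of Theorem~\ref{Universal}, now considered globally. Assume $M$ connected, let $L \subset X$ be the orbit containing $h(M)$, and by weak integrability pick a Lie groupoid $\G \tto L$ with Lie algebroid $A|_L$. Fix $p_0 \in M$, set $x_0 = h(p_0)$, $g_0 = \mathbf{1}_{x_0}$, and consider $P = M {_h\hskip-3 pt}\times_{\t} \G$ with its $\s\circ\pi_{\G}$-foliation $\F$ and the foliated $A$-valued 1-form $\Omega = \pi_M^{\ast}\theta - \pi_{\G}^{\ast}\wmc$. The calculation at the end of the proof of Theorem~\ref{Universal} shows verbatim that $\mathcal{D} = \ker \Omega$ has constant rank and is integrable, and that both $\pi_M$ and $\pi_{\G}$ restrict to local diffeomorphisms on each leaf of $\mathcal{D}$.

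Let $N$ be the maximal integral leaf of $\mathcal{D}$ through $(p_0, g_0)$, put $\pi_1 = \pi_M|_N$, $\pi_2 = \pi_{\G}|_N$, and $U = \pi_2(N) \subset \s^{-1}(x_0)$ (which is open). Since $\Omega|_N = 0$, we have $\pi_1^{\ast}\theta = \pi_2^{\ast}\wmc =: \eta$ and $h \circ \pi_1 = \t \circ \pi_2 =: k$, so $(N, \eta, k)$ is a realization that projects compatibly to $(M, \theta, h)$ and to $(U, \wmc|_U, \t|_U)$. The theorem then reduces to showing that $\pi_1$ and $\pi_2$ are covering maps.

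The main obstacle is to upgrade $\pi_1$ and $\pi_2$ from local diffeomorphisms to honest covering maps. For surjectivity of $\pi_1$, given $p \in M$, join $p_0$ to $p$ by a path, cover it by a finite chain of contractible open sets, and apply Theorem~\ref{Universal} iteratively; by its uniqueness clause the resulting local equivalences glue coherently on overlaps, and their graphs lie in a single connected integral manifold of $\mathcal{D}$, which must therefore be $N$, forcing $(p, g) \in N$ for some $g$. For the evenly-covered property at $p$, choose a contractible neighborhood $V \ni p$; for each $g$ with $(p, g) \in N$, the Local Universal Property combined with the simple connectedness of $V$ (applied via the path-lifting argument used in the first part of the proof of Theorem~\ref{thm: global universal property}) yields a globally defined $\phi_g : V \to \s^{-1}(x_0)$ with $\phi_g(p) = g$ and $\phi_g^{\ast}\wmc = \theta$. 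Its graph $\Gamma_{\phi_g}$ is an open subset of $N$, mapped diffeomorphically onto $V$ by $\pi_1$; uniqueness of $\phi_g$ makes distinct graphs disjoint and exhausts $\pi_1^{-1}(V) \cap N$. A symmetric argument, swapping the roles of $(M, \theta, h)$ and $(\s^{-1}(x_0), \wmc, \t)$, shows $\pi_2 : N \to U$ is also a covering. Hence $(N, \eta, k)$ is a common realization cover of $(M, \theta, h)$ and of the open set $U$ of an $\s$-fiber, completing the proof.
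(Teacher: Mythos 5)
Your construction is the same as the paper's: you integrate $\mathcal{D}=\ker(\pi_M^{\ast}\theta-\pi_{\G}^{\ast}\wmc)$ on the fibered product $M{_h\hskip-3 pt}\times_{\t}\G$ and take the maximal leaf $N$ through $(p_0,\mathbf{1}_{x_0})$. Your surjectivity argument for $\pi_1$ (a chain of charts along a path, with consecutive graphs forced into the same leaf by the uniqueness clause of Theorem \ref{Universal}) is a valid alternative to the paper's, which instead picks a point of $M\setminus\pi_M(N)$ in the closure of $\pi_M(N)$ and derives a contradiction by right-translating the local integral manifold supplied by the universal property. Your verification that $\pi_1:N\to M$ is an honest topological covering (evenly covered contractible neighborhoods via Theorem \ref{thm: global universal property}) is correct and in fact goes beyond the paper, which stops at ``$\pi_M(N)=M$'' and treats a surjective coframe-preserving local diffeomorphism as a covering of realizations.

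The gap is your last sentence: there is no ``symmetric argument'' for $\pi_2:N\to U$. Theorems \ref{Universal} and \ref{thm: global universal property} produce maps \emph{from} an arbitrary realization \emph{into} an $\s$-fiber, never maps from an open subset of an $\s$-fiber into $M$; to run your evenly-covered argument for $\pi_2$ you would need, for each contractible $W\subset U$ and each $(q,g)\in N$ with $g\in W$, a map $\chi:W\to M$ with $\chi^{\ast}\theta=\wmc|_W$ and $\chi(g)=q$, and such an extension of the local inverse of $\pi_2$ can fail because $M$ may be ``incomplete''. Indeed the claim is false in general: take the rank-zero problem with $\gg=\Rr^2$ abelian, $\G=\Rr^2$, $\wmc=(\d x^1,\d x^2)$, and let $M=(0,10\pi)\times(1,2)$ with $\theta=F^{\ast}\wmc$ where $F(t,r)=(r\cos t,r\sin t)$. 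Then $N$ is the graph of $F$, $\pi_1$ is a diffeomorphism, $U=\pi_2(N)$ is the full annulus $\{1<|z|<2\}$, but $\pi_2=F$ has fibers of cardinality $5$ over generic points and $4$ over the ray $\arg z=0$, so it is not a covering map; moreover, since this $M$ is simply connected and every coframe-preserving map $M\to\Rr^2$ is a translate of $F$, no common topological-covering realization of $M$ and an open subset of the $\s$-fiber exists. So you may only conclude, as the paper effectively does, that $\pi_2$ is a surjective local diffeomorphism onto the open set $U$; the stronger ``both legs are covering maps'' statement you aim for is not attainable.
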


\begin{proof}
Let $L\subset X$ be the orbit containing $h(M)$ and choose a Lie groupoid $\G_L$ integrating $A|_L$. Also, let $\mathcal{D}$ be the distribution on $M {_h\hskip-3 pt}\times_{\t} \G_L$ considered in the proof of Theorem \ref{Universal} and let $N$ be a maximal integral manifold of $\mathcal{D}$. If $\pi_M:  M {_h\hskip-3 pt}\times_{\t} \G_L \to M$ and $\pi_{\G}: M {_h\hskip-3 pt}\times_{\t} \G_L \to \G_L$ denote the natural projections, then $\pi_{\G}(N)$ is totally contained in a single $\s$-fiber of $\G_L$, say, $\s^{-1}(x)$. Moreover, the restrictions of the projections to $N$, $\pi_M: N \to M$ and $\pi_{\G}: N \to \s^{-1}(x)$ are local diffeomorphisms.

We claim that $N$, equipped with  the coframe $\pi_M^{\ast}\theta = \pi_{\G}^{\ast}\wmc$, is a common realization cover of $M$ and an open set of an $\s$-fiber of $\G_L$. To show this, all we must show is that $\pi_M(N) = M$. In fact, if this is true, $M$ will be globally equivalent up to cover to $\pi_{\G}(N) \subset \s^{-1}(x)$.

To prove this, suppose that $\pi_M(N)$ is a proper submanifold of $M$ and let $p_0 \in M - \pi_M(N)$ be a point in the closure of $\pi_M(N)$ (remember that $\pi_M\vert_{_N}$ is an open map). Then, by the local universal property of Maurer-Cartan forms, there is an open set $U$ of $p_0$ in $M$ and a diffeomorphism $\phi_0: U \to \phi(U) \subset \s^{-1}(h(p_0))$ such that $\phi_0^{\ast}\wmc = \theta$ and $\phi_0(p_0) = \mathbf{1}_{h(p_0)}$. It follows that the graph of $\phi_0$ is also an integral manifold $N_0$ of the distribution $\mathcal{D}$ on $M {_h\hskip-3 pt}\times_{\t} \G$ which passes through $(p_0, \mathbf{1}_{h(p_0)})$. Now let $p = \pi_M(p,g)$ be any point in $U \cap \pi_M(N)$ where $(p,g) \in N$. Then $\phi_0(p) \in \G$ is an arrow from $h(p_0)$ to $h(p)$ and $g$ is an arrow from $x$ to $h(p)$ and thus $g_0 = \phi_0(p)^{-1} \cdot g$ is an arrow from $x$ to $h(p_0)$, i.e.,
\[\xymatrix{ \stackrel{\bullet}{h(p_0)}  &&& \stackrel{\bullet}{h(p)} \ar@/_0.8cm/[lll]_{\phi_0(p)^{-1}} &&& \stackrel{\bullet}{x} \ar@/_0.8cm/[lll]_{g} \ar@/_1.8cm/[llllll]_{g_0 = \phi_0(p)^{-1} \cdot g}}\]

Now, by virtue of the invariance of the Maurer-Cartan form, the manifold 
\[R_{g_0} N_0 = \set{(\bar{p},\bar{g}\cdot g_0) : (\bar{p},\bar{g}) \in N_0}\]
is an integral manifold of $\mathcal{D}$. But then, the point $(p,g) = (p, \phi_0(p) \cdot g_0)$ belongs to $R_{g_0} N_0$ and to $N$, and thus, by the uniqueness and maximality of $N$ it follows that $N$ contains $R_{g_0} N_0$. But then,  $(p_0, \phi_0(p_0) \cdot g_0)$ is a point of $N$ which projects through $\pi_M$ to $p_0$, which contradicts the fact that $p_0 \in M -\pi_M(N)$, proving the theorem.
\end{proof}

We can now deduce the special cases where the rank of the coframe is either zero or maximal:

\begin{corol}[Theorem 14.28 of \cite{Olver}]
Let $\theta$ be a coframe of rank $0$ on a manifold $M$. Then $M$ is globally equivalent, up to covering, to an open set of a Lie group.
\end{corol}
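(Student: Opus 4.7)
The plan is to reduce this corollary to a direct application of Theorem \ref{thm: global equivalence up to cover}, once the classifying Lie algebroid of the coframe is identified as a Lie algebra. First, I would restrict to the case where $M$ is connected, which is essentially without loss of generality (argue component by component). The hypothesis that $\theta$ has rank $0$ means $r_p(\Inv(\theta))=0$ for every $p\in M$, i.e.\ the differential of every invariant function vanishes identically. On a connected manifold this forces every element of $\Inv(\theta)$ to be constant. In particular, the structure functions $C^k_{ij}$ of $\theta$ are constants, and the quotient $X_\theta = M/\!\sim$ defining the classifying manifold reduces to a single point.

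Next, I would identify the classifying Lie algebroid $A_\theta\to X_\theta$ of the coframe. Since $X_\theta$ is a point, $A_\theta$ is simply a vector space $\gg=\Rr^n$ equipped with the bracket $[\alpha_i,\alpha_j]=-C^k_{ij}\alpha_k$ on the canonical basis. The Jacobi identity for this bracket is precisely the structure equation \eqref{eq:struct:algbrd:2} with $F_i=0$, which holds by Proposition \ref{NecessaryRealization} since $(M,\theta,\kappa_\theta)$ is a realization of the associated Cartan problem. Hence $\gg$ is a Lie algebra. By Lie's third theorem, $\gg$ integrates to a Lie group $G$; viewed as a Lie groupoid over the point $X_\theta$, $G$ has a single source fiber equal to $G$ itself, and the single orbit is all of $X_\theta$, so $A_\theta$ is (trivially) weakly integrable.

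Finally, I would apply Theorem \ref{thm: global equivalence up to cover} to the realization $(M,\theta,\kappa_\theta)$ whose classifying Lie algebroid is the weakly integrable Lie algebra $A_\theta=\gg$. The theorem asserts that $M$ is globally equivalent, up to covering, to an open subset of an $\s$-fiber of an integrating groupoid of $A_\theta$; in the present situation this $\s$-fiber is $G$ itself, yielding exactly the desired conclusion.

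No serious obstacle is expected: the corollary is essentially a direct specialization of Theorem \ref{thm: global equivalence up to cover}, and the only nontrivial verification is the identification of $A_\theta$ as a Lie algebra under the rank-$0$ hypothesis, which turns the $\s$-fibers of the integrating Lie groupoid into (copies of) a Lie group. The mild bookkeeping about connectedness is handled by restricting to components at the outset.
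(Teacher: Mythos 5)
Your proposal is correct and follows essentially the same route as the paper: identify the classifying Lie algebroid of a rank-$0$ coframe as a Lie algebra over the one-point classifying manifold, invoke its integrability by a Lie group, and apply Theorem \ref{thm: global equivalence up to cover}. The paper's own proof is just a terser version of this, and your extra bookkeeping (constancy of invariants on connected components, the sign convention on the bracket, weak integrability being automatic) fills in exactly the details the paper leaves implicit.
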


\begin{proof}
If $\theta$ has rank $0$, the classifying Lie algebroid of the coframe has base $X= \set{*}$, i.e., it is a Lie algebra, so it is integrable by a Lie group. It follows that $M$ is globally equivalent, up to covering, to an open set of this Lie group.
\end{proof}

\begin{corol}[Theorem 14.30 of \cite{Olver}]
Let $(M_1, \theta_1, h_1)$ and $(M_2, \theta_2, h_2)$ be full realizations of rank $n$ over the same leaf $L \subset X$ of the classifiying Lie algebroid $A\to X$. Then $M_1$ and $M_2$ are realization covers of a common realization $(M, \theta, h)$.
\end{corol}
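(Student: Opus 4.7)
The strategy is to exhibit the leaf $L$ itself, equipped with a canonical coframe, as the common realization of which both $M_1$ and $M_2$ are covers. The rank $n$ hypothesis forces $\sharp\colon A|_L\to TL$ to be a fiberwise isomorphism (both are rank $n$ vector bundles over $L$), so $A|_L\cong TL$ as Lie algebroids and, in particular, $A|_L$ is integrable, for instance by $\Pi_1(L)$. Letting $\{\beta^i\}$ be the coframe on $L$ dual to $F_i=\sharp(\al_i)$, the structure equations from Proposition \ref{NecessaryRealization} give $\d\beta^k=\sum_{i<j}C^k_{ij}\beta^i\wedge\beta^j$, and the tautology $v=\sum_i\beta^i(v)F_i$ gives $\d(\mathrm{id}_L)=\sum_i F_i\beta^i$. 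Hence $(L,\{\beta^i\},\mathrm{id}_L)$ is a realization of the Cartan problem. Equation \eqref{eq: dh} then yields $h_i^{\ast}\beta^k=\theta^k_i$, so once each $h_i$ is shown to be a covering map it will exhibit $M_i$ as a realization cover of $(M,\theta,h):=(L,\{\beta^i\},\mathrm{id}_L)$.

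Rank $n$ makes each $h_i$ a local diffeomorphism and fullness makes it surjective; the remaining and crucial step is to upgrade this to the covering property. To do so, I would apply Theorem \ref{thm: global equivalence up to cover} to each $M_i$ using the source simply connected groupoid $\G_L=\Pi_1(L)$. This produces a realization cover $\pi_i\colon\widetilde M_i\to M_i$ which is identified with an open subset $V_i$ of $\s^{-1}(x_i)\cong\widetilde L$, where $\widetilde L$ is the simply connected universal cover of $L$. By right-translating along an arrow from $x_2$ to $x_1$ in $\Pi_1(L)$, one may place both $V_i$ inside the same $\s$-fiber $\widetilde L$, and fullness becomes $\t(V_i)=L$. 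If one can show $V_i=\widetilde L$, then $\t|_{V_i}\colon\widetilde L\to L$ coincides with $h_i\circ\pi_i$ and is the universal covering projection; combined with $\pi_i$ being itself a covering map, a standard covering space argument then forces $h_i$ to be a covering map as well.

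The equality $V_i=\widetilde L$ is the main obstacle I expect: an arbitrary open subset of a simply connected manifold need not equal the whole space merely because its image under a quotient map is surjective. The Maurer-Cartan rigidity behind Theorem \ref{Universal}, combined with the vanishing of infinitesimal symmetries for rank $n$ realizations (Proposition \ref{prop:inf:sym:coframe}, since the isotropy Lie algebra of $A|_L$ is trivial), should together rule out $V_i$ being a proper open subset. Once this is settled, the common realization $(L,\{\beta^i\},\mathrm{id}_L)$ is the required target, with the covering maps $h_i\colon M_i\to L$ completing the proof.
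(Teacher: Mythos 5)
Your construction of the common realization is correct and is, in substance, the paper's own: since $\dim L=n=\rank A$, the anchor $\sharp\colon A|_L\to TL$ is a fiberwise isomorphism, the coframe $\{\beta^i\}$ dual to $\{F_i\}$ satisfies the required structure equations (because $[F_i,F_j]=-\sum_k C^k_{ij}F_k$ by Proposition \ref{NecessaryRealization}), and $h_i^{\ast}\beta^k=\theta_i^k$ follows from \eqref{eq: dh}, so each $h_i$ is a surjective local diffeomorphism of realizations onto $(L,\{\beta^i\},\mathrm{id}_L)$. The only real difference from the paper's argument is that the paper produces the same coframe on $L$ less directly, by integrating $A|_L$ to an \'etale groupoid and pulling $\wmc$ back through local inverses of $\t$ on an $\s$-fiber, checking well-definedness via the right-invariance of $\wmc$ and the discreteness of the isotropy groups; your dualization of the frame $\{F_i\}$ gives the same object in one line and needs no integration at all.

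The step you single out as the crucial remaining obstacle, however, cannot be carried out, and you should not attempt it. The equality $V_i=\widetilde{L}$ is false in general: fullness only gives $\t(V_i)=L$, and a proper open subset of the universal cover can perfectly well surject onto $L$. Concretely, take $n=1$, $X=L=\mathrm{S}^1$, $A=T\mathrm{S}^1$ (so there are no structure functions and $F_1=\partial/\partial\phi$), and let $M_1=(0,4\pi)$ with $\theta_1=\d t$ and $h_1(t)=e^{it}$. This is a full realization of rank $1$, yet $h_1$ is a surjective local diffeomorphism whose fibers jump between one and two points, hence not a topological covering map, and the corresponding $V_1=(0,4\pi)$ is a proper open subset of $\widetilde{L}=\Rr$. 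Neither Maurer--Cartan rigidity nor the vanishing of the isotropy excludes this; what fails is completeness of the vector fields dual to $\theta_1$. Consequently the corollary is only true --- and the paper's proof, like the first two paragraphs of yours, only establishes it --- for the notion of realization cover used by Olver in Theorem 14.30, namely a surjective local diffeomorphism $\phi$ with $\phi^{\ast}\theta=\theta_1$ (a surjective morphism of realizations), and not for the stricter notion of topological covering map given in the Introduction. Read that way, your proof is already complete once you have $h_i^{\ast}\beta^k=\theta_i^k$ together with surjectivity and local diffeomorphy of $h_i$; the attempted upgrade should simply be deleted.
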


\begin{proof}
The assumptions of the corollary guarantee that $L$ is a $n$-dimensional leaf of $A$ for which $h_1(M_1) = L = h_2(M_2)$. Since $A$ has rank $n$, it follows that the anchor of the restriction of $A$ to $L$ is injective, and thus $A|_L$ is integrable (see \cite{CrainicFernandes}). Moreover, its Lie groupoid $\G$ has discrete isotropy, so its target map restricts to a local diffeomorphism on each source fiber.

It follows that we can make $M=L$ into a realization by equipping it with the pullback of the Maurer-Cartan form by the local inverses of $\t$. More precisely, let $U$ be an open set in $\s^{-1}(x)$ for which the restriction of $\t$ is one-to-one and let $V = \t(U)$ be the open image of $U$ by $\t$. Define $\theta_U$ to be the coframe on $V$ given by
\[\theta_U = (\t\vert_U^{-1})^{\ast}\wmc.\]
Then $\theta_U$ is the restriction to $V$ of a globally defined coframe on $L$. In fact, suppose that $\bar{U}$ is another open set of $\s^{-1}(x)$ for which $\t\vert_{\bar{U}}$ is one-to-one and such that  $\bar{V} = \t(\bar{U})$ intersects $V$. Denote by $\theta_{\bar{U}}$ the coframe on $\bar{V}$ defined by 
\[\theta_{\bar{U}} = (\t\vert_{\bar{U}}^{-1})^{\ast}\wmc.\]
We will show that $\theta_U$ and $\theta_{\bar{U}}$ coincide on the intersection $V \cap \bar{V}$. After shrinking $U$ and $\bar{U}$, if necessary, we may assume that $V = \bar{V}$. But then, since $\G$ is \'etale, it follows that the isotropy group $\G_x$ is discrete, which implies that $U$ is the right translation of $\bar{U}$ by an element $g\in \G_x$, i.e., $U = R_g(\bar{U})$. Thus,
\begin{align*}
\theta_U & = (R_g \circ \t\vert_{\bar{U}}^{-1})^{\ast}\wmc \\
&=  (\t\vert_{\bar{U}}^{-1})^{\ast} (R_g^{\ast}\wmc)\\
&=(\t\vert_{\bar{U}}^{-1})^{\ast}\wmc =\theta_{\bar{U}},
\end{align*}
and $\theta$ is a well defined global coframe on $L$.

Now, since both $M_1$ and $M_2$ are globally equivalent, up to covering, to open sets in $\s^{-1}(x)$, it follows that the surjective submersions $h_i: M_i \to L$ are realization covers. 
\end{proof}

\begin{remark}
There is a different, more direct, argument to see that the $h_i$'s are realization covers.  In fact, the coframe $\theta_1$ on $M_1$ is locally the pullback of the Maurer-Cartan form on an $\s$-fiber of $\G$ by some locally defined diffeomorphism $\phi_1 : W_1 \subset M_1 \to \s^{-1}(x)$. After shrinking $W_1$, we may assume that the restriction of $\t$ to $U_1 = \phi_1(W_1)$ is a diffeomorphism. But then, 
\begin{align*}
h_1^{\ast} \theta & = (\t \circ \phi_1)^{\ast} \theta \\
& = \phi_1^{\ast} (\t^{\ast} \circ (\t\vert_U^{-1})^{\ast} \wmc)\\
& = \phi_1^{\ast}\wmc= \theta_1.
\end{align*}

Obviously, the very same argument can be given to show that $h_2$ is also a realization cover. This can be summarized by the diagram:
\[\xymatrix{& \s^{-1}(x) \ar[dd]^{\t} & \\
M_1 \ar[ur]^{\phi_1} \ar[dr]_{h_1} & & M_2 \ar[ul]_{\phi_2} \ar[dl]^{h_2}\\
& L}
\]
\end{remark}

\subsection{Cohomological Invariants of Geometric Structures}  %
\label{subsec:cohomol:invariants}                                %

The classifying Lie algebroid $A$ of a fixed geometric structure should be seen as a basic invariant of global equivalence up to covering of the structure. In fact, we have the following result:

\begin{prop}
\label{prop:algbrd:cover}
Let $\theta$ be a fully regular coframe on $M$ and let $\bar{\theta}$ be an arbitrary coframe on $\bar{M}$. If $(M,\theta)$ and $(\bar{M}, \bar{\theta})$ are globally equivalent, up to covering, then
\begin{enumerate}
\item[(i)] $\bar{\theta}$ is a fully regular coframe on $\bar{M}$, and
\item[(ii)] the classifying Lie algebroids of $\theta$ and $\bar{\theta}$ are isomorphic.
\end{enumerate}
\end{prop}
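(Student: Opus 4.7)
The plan is to exploit a common realization cover to transfer regularity and the algebroid structure between $\theta$ and $\bar\theta$. Fix a manifold $N$ with covering maps $\pi:N\to M$ and $\bar\pi:N\to\bar M$ such that $\pi^{\ast}\theta=\bar\pi^{\ast}\bar\theta=:\theta_N$. Since both $\pi$ and $\bar\pi$ are local diffeomorphisms, the whole statement reduces to the following local fact: if $\psi:P\to Q$ is a local diffeomorphism of manifolds and $\eta$ is a coframe on $Q$, then $\psi^{\ast}\eta$ is fully regular at $p$ iff $\eta$ is fully regular at $\psi(p)$, and in that case the two classifying Lie algebroids agree near the corresponding base points.

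For the regularity transfer, on any open $U\subset P$ where $\psi$ restricts to a diffeomorphism, conjugation by $\psi|_U$ induces a bijection between locally defined self-equivalences of $\eta$ near $\psi(p)$ and those of $\psi^{\ast}\eta$ near $p$. Hence $\Inv(\psi^{\ast}\eta)|_U=\psi^{\ast}(\Inv(\eta)|_{\psi(U)})$, and the ranks of invariants match at corresponding points. Applying this to $\pi$ shows that $\theta_N$ is fully regular on $N$, and applying it to $\bar\pi$ shows that $\bar\theta$ is fully regular on $\bar M$, proving (i).

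For (ii), I build the isomorphism $A_\theta\cong A_{\bar\theta}$ by factoring through $A_{\theta_N}$. First, I construct a diffeomorphism $\hat\pi:X_{\theta_N}\to X_\theta$ covering $\pi$: well-definedness on formal equivalence classes follows from $\pi^{\ast}\Inv(\theta)\subseteq\Inv(\theta_N)$ together with Remark \ref{rem:formal:local:equiv}, and surjectivity follows from surjectivity of $\pi$. Injectivity is the delicate step: if $\pi(q_1)\sim\pi(q_2)$, then all structure functions of $\theta$ and all their iterated coframe derivatives agree at $\pi(q_1),\pi(q_2)$; since these structural invariants of $\theta_N$ are precisely the $\pi$-pullbacks of those of $\theta$, they also agree at $q_1,q_2$. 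By full regularity of $\theta_N$ one can select from these structural invariants a functionally independent generating set of $\Inv(\theta_N)$ near each point, so every element of $\Inv(\theta_N)$ agrees at $q_1,q_2$, giving $q_1\sim q_2$. The invariant-coordinate construction of the smooth structure in Section \ref{subsec:coframes:2} ensures $\hat\pi$ is a diffeomorphism. Running the same argument with $\bar\pi$ and composing yields the desired diffeomorphism $\Psi:=\hat{\bar\pi}\circ\hat\pi^{-1}:X_\theta\to X_{\bar\theta}$.

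Finally, the natural bundle isomorphism $X_\theta\times\Rr^n\to X_{\bar\theta}\times\Rr^n$ covering $\Psi$ preserves both the structure functions $C_{ij}^k$ and the anchor vector fields $F_i$, because these are determined by the structure equations of $\theta_N$ and pushed down identically along the two legs $\kappa_\theta\circ\pi$ and $\kappa_{\bar\theta}\circ\bar\pi$; thus it is a Lie algebroid isomorphism $A_\theta\cong A_{\bar\theta}$. The main obstacle in the whole argument is the injectivity step for $\hat\pi$: one must invoke full regularity to pass from agreement of all structural invariants to agreement of all invariants, which rests on the implicit function theorem argument reviewed at the end of Section \ref{subsec:coframes:1}.
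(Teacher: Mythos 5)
Your part (i) is essentially the paper's argument: the paper reduces to a single covering leg and proves (Lemma \ref{lem:inv:cover}) that $\Inv(\bar{\theta})=\pi^*\Inv(\theta)$ because locally defined self-equivalences upstairs and downstairs correspond to each other under a covering; your conjugation argument is the same, and your two-legged bookkeeping through the common cover $N$ is a harmless variation. For part (ii) you diverge from the paper, which obtains the algebroid morphism wholesale from Corollary \ref{cor:subalgbrd} and then only has to check injectivity of the induced map $X_\theta\to X_{\bar{\theta}}$, which it does by producing an actual local equivalence upstairs via Theorem \ref{thm: classification coframes} and pushing it down. Your hands-on construction of $\hat\pi$ and the final verification that $C^k_{ij}$ and $F_i$ correspond are fine in principle.

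The genuine problem is your injectivity step. You argue that if $\pi(q_1)\sim\pi(q_2)$ then all \emph{structural} invariants of $\theta_N$ (structure functions and their iterated coframe derivatives) agree at $q_1,q_2$, and then assert that "by full regularity one can select from these structural invariants a functionally independent generating set of $\Inv(\theta_N)$." In this paper $\Inv(\theta_N)$ is by definition the set of \emph{all} functions invariant under local self-equivalences, and full regularity is a condition on that whole set; nothing established up to this point says that the structural invariants attain the maximal rank $d$, i.e.\ that every invariant function is locally a function of the structure functions and their coframe derivatives. That identification is true, but within the paper's logic it is a consequence of the solution of the equivalence problem (Theorem \ref{thm: classification coframes} together with Proposition \ref{prop: formal equivalence}), not of the definition of full regularity — which is precisely why the paper's own injectivity argument invokes Theorem \ref{thm: classification coframes} rather than reasoning with structural invariants. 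The good news is that the detour is unnecessary: your own part (i) already gives the full equality $\Inv(\theta_N)=\pi^*\Inv(\theta)$, and combined with Remark \ref{rem:formal:local:equiv} this yields $q_1\sim q_2\iff\pi(q_1)\sim\pi(q_2)$ directly, settling well-definedness and injectivity of $\hat\pi$ in one stroke. With that substitution the proof is complete.
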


\begin{proof}
In order to prove this proposition, it is enough to consider the case where $\theta$ is a fully regular coframe on $M$, $\bar{\theta}$ is a coframe on another manifold $\bar{M}$ and $\pi:\bar{M}\to M$ is a surjective local diffeomorphism which preserves the coframes. 

First we need:

\begin{lemma}
\label{lem:inv:cover}
The invariant functions of the two coframes are in 1:1 correspondence: $\Inv(\bar{\theta})=\pi^*\Inv(\theta)$.
\end{lemma}

\begin{proof}[Proof of Lemma \ref{lem:inv:cover}]
Clearly, since $\pi:\bar{M}\to M$ is a cover, the locally defined self equivalences of $(M,\theta)$ and $(\bar{M},\bar{\theta})$ correspond to each other. Hence the result follows.
\end{proof}
\vskip 10 pt

It follows that if $\theta$ is a fully regular coframe so is the coframe $\bar{\theta}$. Let us denote by $A_\theta \to X_\theta$ and $A_{\bar{\theta}}\to X_{\bar{\theta}}$ the classifying algebroids of $(M,\theta)$ and $(\bar{M},\bar{\theta})$, respectively. Also, we denote by $\kappa_\theta:M\to X_\theta$ and $\kappa_{\bar{\theta}}:\bar{M}\to \bar{\theta}$ the corresponding classifying maps, so $(M,\theta,\kappa_\theta)$ and $(\bar{M},\bar{\theta},\kappa_\theta)$ are realizations of $A_\theta$ and $A_{\bar{\theta}}$. Since $(\bar{M},\bar{\theta},\kappa_\theta\circ\pi)$ is also a realization of $A_\theta$ it follows from Corollary \ref{cor:subalgbrd} that there is an algebroid morphism from $A_\theta$ to $A_{\bar{\theta}}$, which is injective on the fibers and covers a local diffeomorphism $X_\theta\to X_{\bar{\theta}}$ making the following diagram commutative:
\[
\xymatrix{
M\ar[d]_{\kappa_\theta}& \bar{M}\ar[l]_{\pi}\ar[d]^{\kappa_{\bar{\theta}}}\\
X_\theta\ar[r]& X_{\bar{\theta}}
}
\]

We claim that the map $X_\theta\to X_{\bar{\theta}}$ is injective, so that $A_\theta$ and $A_{\bar{\theta}}$ are isomorphic. In fact, let $x,y\in X_\theta$ which are mapped to the same point in $z\in X_{\bar{\theta}}$. Then we can choose points $p,q\in M$ and $\bar{p},\bar{q}\in\bar{M}$ which are mapped to each other under the commutative diagram above. Then, by Theorem \ref{thm: classification coframes}, there exists a local equivalence $\bar{\phi}:\bar{M}\to \bar{M}$, such that  $\phi(\bar{p})=\bar{q}$. But then (after restricting to small enough open sets) $\bar{\phi}$ covers a local equivalence $\phi:M\to M$ such that $\phi(p)=q$. This means that $p$ and $q$ are local equivalent, so $x=\kappa_\theta(p)=\kappa_\theta(q)=y$, thus proving the claim.
\end{proof}

Even though the isomorphism class of the classifying Lie algebroid does not distinguish coframes which are globally equivalent, up to covering, one can use its Lie algebroid cohomology to obtain invariants of coframes. To illustrate this point of view, we will now describe two invariants of coframes arising in this way.

Given a Lie algebroid $A$ we will denote by $(\Omega^\bullet(A),\d_A)$ the complex of $A$-forms where $\Omega^\bullet(A):=\Gamma(\wedge^{\bullet}A^*)$ and the differential is given by the formula:
\begin{multline*}
\d_A \omega (\al_0, \ldots \al_k) = \sum_{i=1}^k (-1)^i \sharp(\al_i)\cdot \omega(\al_0, \ldots, \hat{\al_i}, \ldots, \al_k) + \\
+ \sum_{0 \leq i < j \leq k} (-1)^{i+j} \omega([\al_i,\al_j],\al_0, \ldots, \hat{\al_i}, \ldots, \hat{\al_j},\ldots, \al_k).
\end{multline*}
The resulting cohomology $H^{\bullet}(A)$ is called the Lie algebroid cohomology of $A$. Every Lie algebroid morphism $\Phi:A \to B$ induces a chain map 
\[\Phi^*:(\Omega^{\bullet}(B), \d_B) \to (\Omega^\bullet(A),\d_A),\]
and, hence, also a map in cohomology $\Phi^*:H^\bullet(B)\to H^\bullet(A)$.

For a fully regular coframe $\theta$ on $M$ the Lie algebroid cohomology of its classifying algebroid $A_\theta$ can be described in terms of \emph{invariants forms}. Let us call $\omega\in\Omega^k(M)$ a \textbf{k-invariant form} if for any locally defined equivalence of the coframe $\phi:M\to M$ one has:
\[ \phi^*\omega=\omega. \]
Since the differential of an invariant form is again an invariant form, the subspace of invariant forms $\Omega^{\bullet}_\theta(M)$ is a subcomplex of the de Rham complex.

\begin{definition}
The \textbf{invariant cohomology of $(M,\theta)$}, denoted $H^*_{\theta}(M)$ is the cohomology of the complex of invariant forms.
\end{definition}

Now we note that:

\begin{prop}
\label{prop:inv:cohom}
Let $(M,\theta)$ be a fully regular coframe. Then the Lie algebroid cohomology $H^*(A_{\theta})$ coincides with the invariant cohomology $H^*_\theta(M)$.
\end{prop}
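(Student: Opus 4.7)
The plan is to construct a natural isomorphism of cochain complexes
\[
\theta^{\ast}: (\Omega^{\bullet}(A_{\theta}),\d_{A_{\theta}}) \xrightarrow{\ \cong\ } (\Omega^{\bullet}_{\theta}(M),\d),
\]
which will immediately yield the statement at the level of cohomology. By Proposition \ref{prop:morphisms} applied to the realization $(M,\theta,\kappa_{\theta})$, the coframe is a Lie algebroid morphism $\theta:TM\to A_{\theta}$ covering $\kappa_{\theta}$, and it is a fiberwise isomorphism. Since Lie algebroid morphisms pull back $A$-forms to $A$-forms intertwining the differentials, $\theta^{\ast}$ is automatically a chain map from $\Omega^{\bullet}(A_{\theta})$ into $\Omega^{\bullet}(M)$. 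The remaining work is to identify the image with the invariant subcomplex and to establish injectivity.

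The first step is to check that $\theta^{\ast}$ lands in the invariant forms. Using the global basis of sections $\{\alpha_{1},\ldots,\alpha_{n}\}$ of $A_{\theta}$, any $\omega\in\Omega^{k}(A_{\theta})$ is uniquely written as
\[
\omega = \sum_{i_{1}<\cdots<i_{k}} f_{I}\,\alpha^{i_{1}}\wedge\cdots\wedge\alpha^{i_{k}},\qquad f_{I}\in C^{\infty}(X_{\theta}),
\]
so that $\theta^{\ast}\omega=\sum_{I}(f_{I}\circ\kappa_{\theta})\,\theta^{i_{1}}\wedge\cdots\wedge\theta^{i_{k}}$. If $\phi:U\to V$ is a local self-equivalence of $(M,\theta)$, then $\phi^{\ast}\theta^{i}=\theta^{i}$ and, by the very definition of $\kappa_{\theta}$ (together with Proposition \ref{prop: formal equivalence}, which equates equivalence and formal equivalence), we have $\kappa_{\theta}\circ\phi=\kappa_{\theta}$. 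Therefore $\phi^{\ast}(\theta^{\ast}\omega)=\theta^{\ast}\omega$.

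The second and main step is surjectivity onto $\Omega^{\bullet}_{\theta}(M)$. Any form on $M$ expands uniquely as $\eta=\sum_{I} g_{I}\,\theta^{i_{1}}\wedge\cdots\wedge\theta^{i_{k}}$ with $g_{I}\in C^{\infty}(M)$, and invariance of $\eta$ is equivalent to invariance of each $g_{I}$ (using again that $\phi^{\ast}\theta^{i}=\theta^{i}$). So it suffices to show that every $g\in\Inv(\theta)$ factors as $g=f\circ\kappa_{\theta}$ for a unique $f\in C^{\infty}(X_{\theta})$. This is the crux of the argument. Two points $p,q\in M$ satisfy $\kappa_{\theta}(p)=\kappa_{\theta}(q)$ exactly when they are locally formally equivalent, i.e.\ when $f(p)=f(q)$ for all $f\in\Inv(\theta)$ (see Remark \ref{rem:formal:local:equiv}); hence $g$ is constant on the fibers of $\kappa_{\theta}$, and descends set-theoretically to a function $f$ on $X_{\theta}$. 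Smoothness of $f$ is then read off the charts $\bar h^{\lambda}:X_{\lambda}\to\Rr^{d}$ constructed in the proof that $X_{\theta}$ is a manifold: on $U_{\lambda}$ the invariant $g$ can be written as a smooth function of the functionally independent invariants $h^{\lambda}_{1},\ldots,h^{\lambda}_{d}$, and this expression is precisely $f$ read in the chart $\bar h^{\lambda}$.

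The final step is injectivity. If $\theta^{\ast}\omega=0$ with $\omega=\sum_{I} f_{I}\,\alpha^{i_{1}}\wedge\cdots\wedge\alpha^{i_{k}}$, then $\sum_{I}(f_{I}\circ\kappa_{\theta})\,\theta^{i_{1}}\wedge\cdots\wedge\theta^{i_{k}}=0$; since $\{\theta^{i}\}$ is a coframe, each $f_{I}\circ\kappa_{\theta}=0$, and surjectivity of $\kappa_{\theta}$ forces $f_{I}=0$. Combining the three steps, $\theta^{\ast}$ is an isomorphism of complexes onto $\Omega^{\bullet}_{\theta}(M)$, and passing to cohomology gives the desired identification $H^{\bullet}(A_{\theta})\cong H^{\bullet}_{\theta}(M)$. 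The main obstacle in this argument is the smooth descent of invariants in the surjectivity step; this is exactly the place where the full regularity of $\theta$, and the differentiable structure put on $X_{\theta}$ earlier, are essential.
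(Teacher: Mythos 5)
Your proposal is correct and follows essentially the same route as the paper: the paper's proof simply asserts that $\theta^{\ast}:\Omega^{\bullet}(A_{\theta})\to\Omega^{\bullet}(M)$ is injective with image exactly the invariant forms (characterized by coefficients of the form $a_{I}\circ\kappa_{\theta}$ in the coframe expansion), which is precisely the isomorphism of complexes you construct. Your write-up merely fills in the verification of that assertion — in particular the smooth descent of invariant functions to $X_{\theta}$ via the charts $\bar h^{\lambda}$ — which the paper leaves implicit.
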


\begin{proof}
This is simply a consequence of the fact that $\theta^*:\Omega^\bullet(A_\theta)\to\Omega^\bullet(M)$ is injective and has image precisely the complex of invariant forms. In fact, a form $\omega\in\Omega^k(M)$ is the pullback by $\theta$ of a section $\varphi$ of $\wedge^kA^*_\theta$ iff when we write $\omega$ in terms of the coframe $\theta$ we have:
\[\omega = \sum_{i_1 < \cdots < i_k}a_{i_1,\ldots,i_k}\circ\kappa_\theta~\theta^{i_1}\wedge \cdots \wedge \theta^{i_k},\]
for some smooth functions $a_{i_1,\ldots,i_k}\in C^\infty(X_\theta)$.
\end{proof}

Since can view the coframe as a Lie algebroid map $\theta:TM\to A_\theta$ (over the classifying map $\kappa_\theta:M\to X_\theta$), there is an induced map in cohomology $\theta^*: H^{\bullet}(A_\theta) \to H^{\bullet}_{\mathrm{dR}}(M)$. By the previous proposition, a class belongs to the image of this map iff it can be represented by an invariant $k$-form. In fact, under the isomorphism $H^*(A_{\theta})\simeq H^*_\theta(M)$ the map $\theta^*: H^{\bullet}(A_\theta) \to H^{\bullet}_{\mathrm{dR}}(M)$ corresponds to the map $H^*_\theta(M) \to H^{\bullet}_{\mathrm{dR}}(M)$ induced by the forgetful map.

\begin{remark}
If every local equivalence can be extended to a global equivalence (i.e., if $(M,\theta,\kappa_\theta)$ is a complete realization) then $H^*_\theta(M)$ coincides with the invariant cohomology $H^\bullet_G(M)$ associated with the action of $G=\Diff(M,\theta)$, the group of symmetries of the coframe. If further $G$ is compact, then we have $H^\bullet_G(M)=H^{\bullet}_{\mathrm{dR}}(M)$. 
\end{remark}

From Proposition \ref{prop:algbrd:cover} we see that invariant cohomology is an invariant of global equivalence, up to covering.

\begin{corol}
\label{cor:equiv:cohomology}
Let $(M,\theta)$ and $(\bar{M},\bar{\theta})$ be fully regular coframes which are globally equivalent, up to covering. Then their invariant cohomologies are isomorphic $H_{\theta}^*(M)\simeq H^*_{\bar{\theta}}(\bar{M})$.
\end{corol}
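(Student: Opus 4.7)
The plan is to chain together the two preceding propositions. By Proposition \ref{prop:algbrd:cover}, the hypothesis that $(M,\theta)$ and $(\bar{M},\bar{\theta})$ are globally equivalent up to covering gives a Lie algebroid isomorphism $\Phi:A_\theta\xrightarrow{\sim} A_{\bar{\theta}}$. Since Lie algebroid morphisms functorially induce chain maps on the algebroid de Rham complexes, $\Phi$ induces an isomorphism $\Phi^*:H^\bullet(A_{\bar{\theta}})\xrightarrow{\sim} H^\bullet(A_\theta)$ on Lie algebroid cohomologies.

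Next, I would invoke Proposition \ref{prop:inv:cohom}, which identifies the Lie algebroid cohomology of the classifying algebroid with the invariant cohomology of the coframe. Applied to both coframes, this yields canonical isomorphisms $H^\bullet(A_\theta)\simeq H^\bullet_\theta(M)$ and $H^\bullet(A_{\bar{\theta}})\simeq H^\bullet_{\bar{\theta}}(\bar{M})$, realized by the pullbacks $\theta^*$ and $\bar{\theta}^*$, respectively.

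Composing these three isomorphisms, one obtains
\[
H^\bullet_{\bar{\theta}}(\bar{M})\;\simeq\; H^\bullet(A_{\bar{\theta}})\;\xrightarrow{\Phi^*}\; H^\bullet(A_\theta)\;\simeq\; H^\bullet_\theta(M),
\]
which is the claimed isomorphism. Since all three maps in the chain have already been established (the first and last in Proposition \ref{prop:inv:cohom}, the middle as functoriality of Lie algebroid cohomology under isomorphisms), there is no genuine obstacle: this corollary is essentially a bookkeeping consequence of the two previous propositions. The only minor point to verify would be that if one wants the isomorphism to be natural (e.g.\ compatible with the covering map realizing the global equivalence), one should trace through the construction of $\Phi$ in the proof of Proposition \ref{prop:algbrd:cover} and check that $\Phi\circ\theta = \bar{\theta}\circ\pi_*$ on the covering level, so that the induced diagram on invariant forms commutes; but for the mere existence of an isomorphism this is not needed.
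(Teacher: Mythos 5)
Your proof is correct and follows exactly the route the paper intends: the corollary is stated there without an explicit proof, as an immediate consequence of Proposition \ref{prop:algbrd:cover} (isomorphism of the classifying Lie algebroids) combined with Proposition \ref{prop:inv:cohom} (identification of $H^\bullet(A_\theta)$ with $H^\bullet_\theta(M)$) and functoriality of Lie algebroid cohomology. Your closing remark about naturality of the isomorphism is a sensible extra observation but, as you note, not needed for the statement as given.
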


The invariant cohomology of a coframe is the natural place where classes associated with the coframe live. We will study next one such example. 

\begin{remark}
\label{rem:cohom:realization}
There is also an obvious extension of these constructions and results to any realization $(M, \theta, h)$ of a Cartan problem, so one can define the invariant cohomology $H^*_{\theta,h}(M)$. This cohomology is invariant under global equivalence and if  $(M,\theta, h)$ is complete, and $G = \Diff(M,\theta,h)$, then it coincides with $H^\bullet_G(M)$.
\end{remark}

\subsection{The modular class}                                   %
\label{subsec:mod:class}                                             %

Lie algebroids have associated intrinsic characteristic classes (see \cite{Fernandes:holonomy,Crainic:Van,CrainicFernandes:exotic}). Therefore, for a fully regular coframe $\theta$ we can consider the characteristic classes of its classifying Lie algebroid $A_\theta$ and we obtain certain invariant cohomology classes which maybe called the characteristic classes of the coframe. The simplest of these classes is the \emph{modular class} (\cite{Weinstein:modular1,Weinstein:modular2}) which we recall briefly.

Given a Lie algebroid $A\to M$ the line bundle $L_A:=\wedge^{\text{top}}A\otimes\wedge^{\text{top}}T^*M$ carries a natural flat $A$-connection, which makes $L$ into a $A$-representation, defined by:
\[ \nabla_\al (\omega\otimes\nu)=\Lie_\al\omega\otimes\nu+\omega\otimes\Lie_{\rho(\al)}\nu\quad (\al\in\Gamma(A)). \]
When $L_A$ is orientable, so that it carries a nowhere vanishing section $\mu\in\Gamma(L_A)$, we have:
\[ \nabla_\al \mu=\langle c_\mu,\al\rangle\mu,\quad \forall \al\in\Gamma(A),\]
for a 1-form $c_\mu\in\Omega^1(A)$. One checks easily that $c_\mu$ is $\d_A$-closed and one calls $c_\mu$ the \textbf{modular cocycle}  of $A$ relative to the nowhere vanishing section $\mu$. If $\mu'=f \mu$ is some other non-vanishing section, then one finds that:
\[ c_{\mu'}=c_\mu+\d_A \log |f|. \]
so the cohomology class $\modular(A):=[c_\mu]\in H^1(A)$ does not depend on the choice of global section $\mu$.  One calls $\modular(A)$ the \textbf{modular class} of $A$. When $L_A$ is not orientable, one repeats the construction for $L_A\otimes L_A$ and defines $\modular(A)$ to be one half the cohomology class associated with the line bundle  $L_A\otimes L_A$.

\begin{definition}
The \textbf{modular class of a fully regular coframe} is the invariant cohomology class $\modular(\theta)\in H^1_{\theta}(M)$ which under the natural isomorphism $H^\bullet_{\theta}(M)\simeq H^\bullet (A_\theta)$ corresponds to the class $\modular(A_\theta)$.
\end{definition}

It should be clear that if $(M,\theta)$ and $(\bar{M},\bar{\theta})$ are fully regular coframes which are globally equivalent, up to covering, then under the natural isomorphism $H_{\theta}^*(M)\simeq H^*_{\bar{\theta}}(\bar{M})$ given by Corollary \ref{cor:equiv:cohomology} the modular classes correspond to each other.

Our next proposition leads to a geometric interpretation of the modular class of a coframe:

\begin{prop}
\label{prop:mod:class}
Let $(M,\theta)$ be a fully regular coframe with symmetry Lie algebra $\gg:=\X(M,\theta)$ of dimension $k$. Let $\{\xi_1,\dots,\xi_k\}$ be a basis of $\gg$ and let $\mu_G$ be an invariant k-form which restricts to a volume form on each $\gg$-orbit. Then:
\[ \modular(\theta)=-[\d\log|\langle\mu_G,\xi_1\wedge\cdots\wedge\xi_k\rangle|]\in H^1_\theta(M). \]
\end{prop}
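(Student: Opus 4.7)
The plan is to exhibit a distinguished nowhere-vanishing section of the orientation line bundle $L_{A_\theta}$ built out of $(\mu_G,\xi_1,\ldots,\xi_k)$, compute its modular cocycle by a pullback calculation on $M$, and then invoke the isomorphism $H^\bullet(A_\theta)\cong H^\bullet_\theta(M)$ of Proposition \ref{prop:inv:cohom}. I would begin by exploiting transitivity of $A_\theta$ (Remark \ref{rem:transitive}): the anchor sequence $0\to\gg\to A_\theta\to TX_\theta\to 0$ gives a canonical line bundle isomorphism
\[
L_{A_\theta}=\wedge^{n}A_\theta\otimes\wedge^{d}T^{*}X_\theta\;\cong\;\wedge^{k}\gg,
\]
intertwining the canonical flat $A_\theta$-connection on $L_{A_\theta}$ with the natural Lie-derivative action of $A_\theta$ on $\wedge^{k}\gg$, which is well-defined because $\gg$ is a Lie ideal of $A_\theta$. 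This reduces the computation of $\modular(A_\theta)$ to one on $\wedge^{k}\gg$.

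Next, I would construct the distinguished section. Viewing the coframe as the algebroid morphism $\theta\colon TM\to A_\theta$, each infinitesimal symmetry $\xi_j\in\X(M,\theta)$ gives $\tilde\xi_j:=\theta\circ\xi_j\in\Gamma(\kappa_\theta^{*}\gg)$ (values in $\gg$ since $\d\kappa_\theta\cdot\xi_j=0$). Setting $\tilde\sigma:=\tilde\xi_1\wedge\cdots\wedge\tilde\xi_k$ and $f:=\langle\mu_G,\xi_1\wedge\cdots\wedge\xi_k\rangle$, I would check that $\tilde\sigma/f$ is invariant under local self-equivalences: any such $\phi$ acts on the finite-dimensional symmetry algebra $\X(M,\theta)$ by a constant matrix $A(\phi)\in\GL(\gg)$ via pushforward, and this rescales both $\tilde\sigma$ and $f$ by the same factor $\det A(\phi)$. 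Hence $\tilde\sigma/f$ descends to a nowhere-vanishing section $\mu\in\Gamma(\wedge^{k}\gg)\cong\Gamma(L_{A_\theta})$ over $X_\theta$.

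The crux of the argument is the flatness identity
\[
\bar\nabla_{X_i}\tilde\xi_j=0\qquad(i=1,\ldots,n;\ j=1,\ldots,k),
\]
where $\bar\nabla$ denotes the pullback of the canonical $A_\theta$-connection along $\theta$ and $\{X_i\}$ is the frame dual to $\{\theta^i\}$. Expanding $\tilde\xi_j=\sum_l\theta^l(\xi_j)\,\kappa_\theta^{*}\alpha_l$ and using Leibniz, this reduces to the identity
\[
X_i\bigl(\theta^l(\xi_j)\bigr)=\sum_m(C^l_{im}\!\circ\!\kappa_\theta)\,\theta^m(\xi_j),
\]
which is exactly Cartan's formula for $\Lie_{\xi_j}\theta^l=0$ unpacked via the structure equation $\d\theta^l=\sum_{a<b}(C^l_{ab}\circ\kappa_\theta)\theta^a\wedge\theta^b$. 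This is the step where the infinitesimal symmetry condition on the $\xi_j$ enters the cocycle computation and, I expect, the main technical hurdle, since it requires carefully tracking the pullback connection against the bracket in $A_\theta$.

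With the flatness identity in hand, the Leibniz rule gives $\bar\nabla_{X_i}(\tilde\sigma/f)=-X_i(\log|f|)(\tilde\sigma/f)$. Comparing with the defining relation $\bar\nabla_{X_i}(\kappa_\theta^{*}\mu)=(\langle c_\mu,\alpha_i\rangle\circ\kappa_\theta)\,\kappa_\theta^{*}\mu$ for the modular cocycle, one reads off $\theta^{*}c_\mu=-\sum_iX_i(\log|f|)\,\theta^i=-\d\log|f|$ as $1$-forms on $M$. Passing to cohomology through Proposition \ref{prop:inv:cohom} yields the stated identity $\modular(\theta)=-[\d\log|\langle\mu_G,\xi_1\wedge\cdots\wedge\xi_k\rangle|]$ in $H^{1}_\theta(M)$.
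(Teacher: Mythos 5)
Your argument is correct and follows essentially the same route as the paper: both exploit the transitive-algebroid isomorphism $L_{A_\theta}\simeq\wedge^{\mathrm{top}}\ker\rho$, and your section $\tilde{\sigma}/f$ is exactly the dual section $\mu_G^*$ that the paper works with, since $\langle\mu_G,\tilde{\sigma}/f\rangle=1$. Your flatness identity $\bar{\nabla}_{X_i}\tilde{\xi}_j=0$ is the transposed form of the paper's computation $\nabla_{\al_i}\mu_G=\Lie_{\al_i}\mu_G$ combined with $[\al_i,\xi_j]=0$, so the two cocycle calculations are dual to one another and produce the same representative $-\d\log|f|$.
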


\begin{proof}
For a transitive algebroid the bundle $\ker\rho$ is a natural $A$-representation and we have a natural isomorphism of $A$-representations $\wedge^{\text{top}}\ker\rho\simeq L_A$. By Remark \ref{rem:transitive}, the classifying Lie algebroid $A_\theta\to X_\theta$ is always transitive. The classifying map $k_\theta:M\to X_\theta$, whose fibers coincide with the $\gg$-orbits, give an identification $k_\theta^*\ker\rho\simeq \ker\d k_\rho$. Hence, we can identify invariant sections of $\ker\d k_\rho$ with sections of $\ker\rho$, and it follows that any invariant k-form $\mu_G$ which restricts to a volume form on each $\gg$-orbit determines a global section of $L_A^*$ (and, hence, of $L_A$). If $\{\al_1,\dots,\al_n\}$ denotes the canonical basis of $A$ (which we identify with the invariant vector fields dual to the coframe $\{\theta^1,\dots,\theta^n\}$), then under these identifications, we have that:
\[ \nabla_{\al_i} \mu_G =\Lie_{\al_i} \mu_G, \]
where on the right-hand side we view $\al_i$ as an invariant vector field on $M$. It follows that the modular cocycle relative to the section $\mu=\mu_G^*$ is given by:
\begin{align*}
c_\mu(\al_i)&=\frac{\langle c_\mu(\al_i)\mu_G,\xi_1\wedge\cdots\wedge\xi_k\rangle}{\langle\mu_G,\xi_1\wedge\cdots\wedge\xi_k\rangle}\\
&=\frac{\langle -\nabla_{\al_i}\mu_G,\xi_1\wedge\cdots\wedge\xi_k\rangle}{\langle\mu_G,\xi_1\wedge\cdots\wedge\xi_k\rangle}\\
&=\frac{\langle -\Lie_{\al_i}\mu_G,\xi_1\wedge\cdots\wedge\xi_k\rangle}{\langle\mu_G,\xi_1\wedge\cdots\wedge\xi_k\rangle}\\
&=-\frac{\Lie_{\al_i}\langle\mu_G,\xi_1\wedge\cdots\wedge\xi_k\rangle}{\langle\mu_G,\xi_1\wedge\cdots\wedge\xi_k\rangle}=-\langle \al_i,\d\log|\langle\mu_G,\xi_1\wedge\cdots\wedge\xi_k\rangle|\rangle
\end{align*}
This proves that the expression for the modular class in the statement of the proposition holds.
\end{proof}

The proposition shows, in particular, that the natural map $H^*_\theta(M) \to H^{\bullet}_{\mathrm{dR}}(M)$ sends the modular class $\modular(\theta)$ to zero. Thus, the modular class is an obstruction for a complete coframe to have a compact symmetry group, i.e.,
\begin{corol}
Let $(M,\theta)$ be a fully regular coframe. If $\theta$ is complete, and the symmetry group $\Diff(M, \theta)$ is compact, then $\modular(\theta) = 0$.
\end{corol}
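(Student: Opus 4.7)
The plan is to combine the remark following Proposition \ref{prop:inv:cohom} with the observation that Proposition \ref{prop:mod:class} already expresses the modular class as an exact de Rham form. Concretely, the proposition shows that the image of $\modular(\theta)$ under the forgetful map $H^1_\theta(M)\to H^1_{\mathrm{dR}}(M)$ is zero, since the modular cocycle is, on the nose, the de Rham differential $-\d\log|\langle\mu_G,\xi_1\wedge\cdots\wedge\xi_k\rangle|$ of a globally defined function. So it suffices to show that, under the stated hypotheses, the forgetful map $H^1_\theta(M)\to H^1_{\mathrm{dR}}(M)$ is \emph{injective}.

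First I would invoke completeness: by the remark at the end of Section \ref{subsec:cohomol:invariants}, if every local equivalence of $(M,\theta)$ extends to a global one, then the invariant cohomology $H^\bullet_\theta(M)$ coincides with the $G$-invariant de Rham cohomology $H^\bullet_G(M)$, where $G=\Diff(M,\theta)$. Next I would use compactness: when $G$ is a compact Lie group acting smoothly on $M$, the standard averaging argument (integrating against a bi-invariant Haar measure on $G$) provides a chain projection $\Omega^\bullet(M)\to\Omega^\bullet_G(M)$ that is a left inverse to the inclusion and induces an isomorphism $H^\bullet_G(M)\xrightarrow{\sim} H^\bullet_{\mathrm{dR}}(M)$. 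Composing these two identifications, the forgetful map $H^1_\theta(M)\to H^1_{\mathrm{dR}}(M)$ is itself an isomorphism.

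Finally, by Proposition \ref{prop:mod:class}, the modular cocycle representative of $\modular(\theta)$ is $-\d\log|\langle\mu_G,\xi_1\wedge\cdots\wedge\xi_k\rangle|$, which is manifestly de Rham exact; hence its image in $H^1_{\mathrm{dR}}(M)$ vanishes. Since the forgetful map is injective, this forces $\modular(\theta)=0$ in $H^1_\theta(M)$.

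The only genuinely delicate point is checking that the averaging step is legitimate in our setting. Two things need to be verified: (i) the $G$-action on $M$ is smooth, which is guaranteed by Theorem \ref{thm:Lie:symmetries} (which makes $G=\Diff(M,\theta)$ into a finite-dimensional Lie group acting smoothly on $M$); and (ii) that the function $\log|\langle\mu_G,\xi_1\wedge\cdots\wedge\xi_k\rangle|$ appearing in Proposition \ref{prop:mod:class} is globally defined, i.e., that the invariant top form on $\gg$-orbits is nowhere vanishing—this is automatic because $\{\xi_1,\dots,\xi_k\}$ is a basis of the symmetry algebra, so the wedge $\xi_1\wedge\cdots\wedge\xi_k$ is nowhere zero on the $\gg$-orbits where $\mu_G$ is a volume form. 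With these two verifications, the three-step argument above yields $\modular(\theta)=0$.
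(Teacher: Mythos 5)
Your argument is correct and is exactly the one the paper intends: the corollary is deduced from Proposition \ref{prop:mod:class} (which exhibits the modular cocycle as an exact de Rham form, so $\modular(\theta)$ dies under the forgetful map) together with the remark that completeness gives $H^\bullet_\theta(M)=H^\bullet_G(M)$ and compactness of $G$ gives $H^\bullet_G(M)=H^\bullet_{\mathrm{dR}}(M)$ by averaging, making the forgetful map injective. Your two verifications (smoothness of the $G$-action via Theorem \ref{thm:Lie:symmetries}, and non-vanishing of $\langle\mu_G,\xi_1\wedge\cdots\wedge\xi_k\rangle$) are welcome extra care but do not change the route.
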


Also, we conclude from Proposition \ref{prop:mod:class}  that:

\begin{corol}
The symmetry Lie algebra $\gg:=\X(M,\theta)$ is unimodular if and only if $\modular(\theta)=0$.
\end{corol}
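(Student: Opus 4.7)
The plan is to combine the explicit representative $\modular(\theta) = -[\d\log|f|]$ provided by Proposition~\ref{prop:mod:class}, where $f := \langle \mu_G, \xi_1\wedge\cdots\wedge\xi_k\rangle$, with a direct computation relating the Lie derivatives of $f$ along $\gg$ to the modular character $\chi \in \gg^*$ defined by $\chi(\xi) := \tr(\ad_\xi)$. By definition, $\gg$ is unimodular precisely when $\chi \equiv 0$, so the goal is to translate the vanishing of $\modular(\theta)$ into the vanishing of $\chi$.

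The key computation is the identity
\[
\Lie_{\xi_i} f = \chi(\xi_i)\, f,
\qquad\text{i.e.,}\qquad
(\d \log|f|)(\xi_i) = \chi(\xi_i).
\]
This follows from the $\gg$-invariance $\Lie_{\xi_i}\mu_G = 0$: expanding $\Lie_{\xi_i}(\mu_G(\xi_1,\ldots,\xi_k))$ by the derivation rule and writing $[\xi_i,\xi_j] = \sum_l c^l_{ij}\xi_l$, only the diagonal terms $l=j$ survive contraction with $\mu_G$, yielding $\sum_j c^j_{ij} = \chi(\xi_i)$. Note that $f$ is nowhere zero since $\mu_G$ restricts to a volume form on each $\gg$-orbit and the $\xi_i$ span this orbit's tangent space pointwise.

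For $(\Leftarrow)$, assume $\modular(\theta) = 0$. Then $\d\log|f| = \d h$ for some $h\in\Inv(\theta)$. Since invariants factor through $\kappa_\theta$ as $h = g\circ\kappa_\theta$ and infinitesimal symmetries satisfy $(\kappa_\theta)_*\xi_i = 0$, one finds $(\d h)(\xi_i) = \Lie_{\xi_i} h = 0$, hence $\chi(\xi_i) = 0$ for each $i$, so $\gg$ is unimodular. Conversely, if $\gg$ is unimodular then $\Lie_{\xi_i}\log|f| = 0$, so $f$ is $\gg$-invariant. Since $A_\theta$ is transitive (Remark~\ref{rem:transitive}) and its isotropy is identified with the symmetry algebra (Proposition~\ref{prop:inf:sym:coframe}), the $\xi_i$ span the tangent space to the fibers of $\kappa_\theta$ at each point, so $\log|f|$ descends to a function on $X_\theta$ and defines an element of $\Inv(\theta)$. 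Therefore $\d\log|f|$ is exact in invariant cohomology, and $\modular(\theta) = 0$.

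The main subtlety lies in the converse direction: one must promote $\gg$-invariance of $f$ to it being an honest invariant function of the coframe. This is automatic when the global Lie algebra $\gg = \X(M,\theta)$ exhausts the germs of infinitesimal symmetries at each point (for instance, for complete realizations), and can be carried out locally in general, using that local equivalence classes coincide with local $\gg$-orbits via Proposition~\ref{prop:inf:sym:coframe}.
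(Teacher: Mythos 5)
Your proof is correct and follows essentially the same route as the paper: the core identity $\Lie_{\xi}\log\left|\langle\mu_G,\xi_1\wedge\cdots\wedge\xi_k\rangle\right|=\tr(\ad\xi)$ is exactly the paper's computation, and both arguments then translate vanishing of $\modular(\theta)$ in $H^1_\theta(M)$ into vanishing of the modular character. You are merely more explicit than the paper about the two directions and about promoting $\gg$-invariance of $f$ to membership in $\Inv(\theta)$ (a point the paper glosses over, but which is covered by the standing hypothesis of Proposition~\ref{prop:mod:class} that the $\gg$-orbits coincide with the fibers of $\kappa_\theta$).
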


\begin{proof}
One finds for any $\xi\in\gg$ that:
\begin{align*}
\Lie_\xi \log |\langle\mu_G,\xi_1\wedge\cdots\wedge\xi_k\rangle|&=\frac{\Lie_\xi \langle\mu_G,\xi_1\wedge\cdots\wedge\xi_k\rangle}{\langle\mu_G,\xi_1\wedge\cdots\wedge\xi_k\rangle}\\
&=\frac{\sum_{i=1}^k \langle\mu_G,\xi_1\wedge\cdots \wedge [\xi,\xi_i] \wedge\cdots\wedge\xi_k\rangle}{\langle\mu_G,\xi_1\wedge\cdots\wedge\xi_k\rangle}=\tr(\ad\xi)
\end{align*}
Hence, we conclude that $\log |\langle\mu_G,\xi_1\wedge\cdots\wedge\xi_k\rangle|$ is an invariant function if and only if $\gg$ is unimodular.
\end{proof}

\begin{remark}
More generally, if $(M, \theta, h)$ is a realization of a Cartan problem associated with a classifying Lie algebroid $A \to X$ one can also define its modular class by
\[ \modular(M, \theta, h):=\modular(A) \in H^1_{\theta,h}(M) \]
where $H^1_{\theta,h}(M)$ denotes the invariant cohomology (see Remark \ref{rem:cohom:realization}).  A version of Proposition \ref{prop:mod:class} also holds.
\end{remark}

\section{An Example: Surfaces of Revolution}
\label{sec: examples}

In this section, we consider the local classification of surfaces of revolutions up to isometries. We begin by deducing the structure equations of a generic surface of revolution following very closely the differential analysis presented in Chapter 12 of \cite{Olver}. However, since the local moduli space of surfaces of revolution is infinite dimensional (we will see that it depends on an arbitrary function which will be denoted by $H$), we will then restrict to a finite dimensional class of surfaces of revolution, namely those for which the moduli $H$ has a fixed value, and exhibit its classifying Lie algebroid. We use this example mainly to illustrate several of the concepts and results presented throughout the paper. In the sequel to this paper, where we discuss $G$-structures, we will discuss more sophisticated (and interesting examples), such as special symplectic manifolds.

Let $x,y,z$ denote the canonical coordinates on $\Rr^3$ and let $z = f(x)$ be a curve in the plane $\set{y = 0}$ which does not intersect the $z$-axis, i.e., such that $f(x) \neq 0$ for all $x \in \Rr$. Let $\Sigma$ be the surface obtained by rotating $f(x)$ around the $z$-axis, which we can parameterize using polar coordinates
\[ \left\{
\begin{array}{l}
x = r \cos v \\
y = r \sin v \\
z = f(r)
\end{array}\right. \]
where $r >0$.

The relevant coframes for this classification problem is the one which diagonalizes the metric $\mathrm{ds}^2$ induced by the Euclidean metric on $\Sigma \subset \Rr^3$, i.e., the coframes $\{\omega^1,\omega^2\}$ for which 
\[\mathrm{ds}^2 = (\omega^1)^2 + (\omega^2)^2.\]
In order to simplify the expression for such a coframe, we first perform the change of coordinates
\[u = \int \sqrt{1 + f^{\prime}(r)^2}\d r\]
with inverse $r = h(u)$, where $h(u) > 0$ for all $u$. With respect to the new coordinates $(u,v)$, the metric can be written as
\[\mathrm{ds}^2 = (\d u)^2 + h(u)^2(\d v)^2, \]
so any diagonalizing coframe for $\mathrm{ds}^2$ is given by 
\[ \left\{
\begin{array}{l}
\omega^1 = \d u \\
\omega^2 = h(u)\d v.
\end{array}\right. \]

Notice that any coframe which differs from $\{\omega^1, \omega^2\}$ by a rotation represents the same metric. This suggests that we should consider the coframe
\[ \left\{
\begin{array}{l}
\theta^1 = (\sin t) \omega^1 + (\cos t) \omega^2 \\
\theta^2 = (-\cos t) \omega^1 + (\sin t) \omega^2\\
\alpha = \d t
\end{array}\right. \]
on $\Sigma \times \mathrm{S}^1$, where $\alpha$ is the Maurer-Cartan form on $\mathrm{S}^1 \cong \mathrm{SO}(2)$. 

\begin{remark}
The passage from $M$ to $M\times G$ is the very first preliminary step in Cartan's equivalence method. The coframe in $M\times G$, obtained by adding the Maurer-Cartan forms, is called the \emph{lifted coframe} (see \cite{Olver}).
\end{remark}

Next, in order to obtain the structure equations and structure functions, we differentiate the one forms $\theta^1$, $\theta^2$ and $\alpha$ obtaining
\begin{equation} \label{eq: structure with torsion}
\left\{
\begin{array}{l}
\d\theta^1 = -\alpha \wedge \theta^2 + (\frac{h^{\prime}(u)}{h(u)}\cos t) \theta^1\wedge\theta^2 \\
\d\theta^2 = \alpha \wedge \theta^1 + (\frac{h^{\prime}(u)}{h(u)}\sin t) \theta^1\wedge\theta^2\\
\d\alpha = 0
\end{array}\right. \end{equation}
These structure equations depend explicitly on the coordinates $u$ and $t$. However, this can be fixed by replacing the form $\alpha$ by
\begin{equation}\label{eq: absorption of torsion}
\eta = \alpha - \frac{h^{\prime}(u)}{h(u)}((\cos t)\theta^1 + (\sin t)\theta^2) = \alpha - h^{\prime}(u)\d v.
\end{equation}

If we now rewrite \eqref{eq: structure with torsion} in terms of the new coframe we obtain
\[
\left\{
\begin{array}{l}
\d\theta^1 = -\eta \wedge \theta^2 \\
\d\theta^2 = \eta \wedge \theta^1\\
\d\eta = \kappa \theta^1 \wedge \theta^2
\end{array}\right. \]
where $\kappa = -h^{\prime \prime}(u)/h(u)$ is the Gaussian curvature of $\Sigma$.

\begin{remark}
The substitution of $\alpha$ by $\eta$ is another step in Cartan's equivalence method and is known as \emph{absorption of torsion} (see \cite{Olver}).
\end{remark}

The next step is to calculate the coframe derivative of $\kappa$. We obtain
\[\d \kappa = \kappa^{\prime}(u)((\sin t)\theta^1 - (\cos t)\theta^2).\]
However, since $\Sigma$ is a \emph{surface of revolution}, we know that its symmetry Lie group must be at least one dimensional. It follows that there can be at most two independent invariants, and since we already have two invariant functions, namely $t$ and $\kappa$, we conclude that $\kappa^{\prime}(u) = H(\kappa)$ for some $H \in \mathrm{C}^{\infty}(\Rr)$.

\begin{remark}\label{rmk: t as an invariant}
By its definition, $t$ is the angle between the orthogonal coframe at a point and the special coframe $\set{\d u, h(u)\d v}$ given in the adapted coordinates. Thus, imposing $t$ as an invariant is the same as considering surfaces of revolution up to isometries which fix the angle $t$. This can be geometrically interpreted as a restriction on the allowed symmetries of $\Sigma$. For a generic surface of revolution this is not really a restriction but, for example, when $H=0$, i.e., when the curvature of $\Sigma$ is constant, this imposes a true restriction on the symmetry group. 
\end{remark}

Finally, from \eqref{eq: absorption of torsion} we find that
\[\d t = \eta + J(\kappa)((\cos t)\theta^1 +(\sin t)\theta^2)\]
for some $J \in \mathrm{C}^{\infty}(\Rr)$. The relationship between the two functions $H$ and $J$ follows from imposing $\d^2\kappa = 0$. If we assume $\kappa'(u)\not=0$, i.e., that $H(\kappa)\not=0$, it yields
\begin{equation}\label{eq: differential relation}
J^{\prime}(\kappa)H(\kappa) = -\kappa - J(\kappa)^2.
\end{equation}
In conclusion, the full set of \textbf{structure equations for surfaces of revolution} is
 \begin{equation}\label{eq: structure equations for surfaces of revolution}
\left\{
\begin{array}{l}
\d\theta^1 = -\eta \wedge \theta^2 \\
\d\theta^2 = \eta \wedge \theta^1\\
\d\eta = \kappa \theta^1 \wedge \theta^2\\
\d \kappa = H(\kappa)((\sin t)\theta^1 - (\cos t)\theta^2)\\
\d t = \eta + J(\kappa)((\cos t)\theta^1 +(\sin t)\theta^2)
\end{array}\right. \end{equation}
where $H$ is an arbitrary smooth function and $J \in  \mathrm{C}^{\infty}(\Rr)$ is determined by \eqref{eq: differential relation}. Notice that $\d^2 = 0$ is a consequence of the equations above.

\begin{remark}[Continuing Remark \ref{rmk: t as an invariant}]
\label{rmk: t as an invariant:2}
When $\kappa'(u)=0$ the analysis above collapses and the structure equations for surfaces of revolution reduce to:
\begin{equation}
\label{eq:constant:curvature}
\left\{
\begin{array}{l}
\d\theta^1 = -\eta \wedge \theta^2 \\
\d\theta^2 = \eta \wedge \theta^1\\
\d\eta = \kappa \theta^1 \wedge \theta^2\\
\d \kappa = 0.
\end{array}\right. 
\end{equation}
\end{remark}

Now that we have found the structure equations for an arbitrary surface of revolution, we may immediately write the classifying Lie algebroid $A$ for such surfaces. As a vector bundle, $A \cong (\Rr \times \mathrm{S}^1)\times \Rr^3 \to \Rr \times \mathrm{S}^1$. Its Lie bracket is given on the constant sections $e_1,e_2$ and $e_3$ by 
\[
\begin{array}{l}
\left[e_1, e_2\right](\kappa, t) =  -\kappa e_3 \\
\left[e_1, e_3\right] (\kappa, t)= e_2 \\
\left[e_2, e_3\right] (\kappa, t)= -e_1
\end{array}\]
and its anchor is given by
\[
\begin{array}{l}
\sharp(e_1)(\kappa, t) =  H(\kappa)\sin t \frac{\partial}{\partial \kappa} + J(\kappa)\cos t \frac{\partial}{\partial t} \\
\sharp(e_2)(\kappa, t) =  -H(\kappa)\cos t \frac{\partial}{\partial \kappa} + J(\kappa)\sin t \frac{\partial}{\partial t} \\
\sharp(e_3)(\kappa, t) = \frac{\partial}{\partial t}.
\end{array}\]
Again, when $\kappa'(u)=0$ (cf.~Remarks \ref{rmk: t as an invariant} and \ref{rmk: t as an invariant:2}), the analysis collapses and we obtain an algebroid over a 1-dimensional manifold with zero anchor (i.e., a bundle of Lie algebras).

For any function $H(\kappa)$ the equations above determine a Lie algebroid. From Theorem \ref{thm: classification coframes} and the fact that the algebroid is transitive, we deduce that:

\begin{prop}
For any $\kappa_0 \in \Rr$ and any $H \in \mathrm{C}^{\infty}(\Rr)$ there exists a surface of revolution $\Sigma$ with $p \in \Sigma$ such that $\kappa(p) = \kappa_0$ and for which $\kappa^{\prime}(u)=H(\kappa)$. Moreover, any two such surfaces of revolution are locally isometric in a neighborhood of the points corresponding to $\kappa_0$.  
\end{prop}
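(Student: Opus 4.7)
The plan is to deduce existence by solving the defining ODEs directly and to deduce local uniqueness by invoking Theorem \ref{thm: classification coframes}(ii) applied to the classifying Lie algebroid $A \to \Rr \times \mathrm{S}^1$ just exhibited. The preliminary step is to verify that $A$ is genuinely a Lie algebroid: skew-symmetry and Leibniz are automatic, while Jacobi and anchor-compatibility reduce to \eqref{eq:struct:algbrd:1}--\eqref{eq:struct:algbrd:2}, which are equivalent to $\d^2=0$ for the system \eqref{eq: structure equations for surfaces of revolution}. This last identity is precisely what the compatibility relation \eqref{eq: differential relation} between $H$ and $J$ records.

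For existence, I would run the derivation leading to \eqref{eq: structure equations for surfaces of revolution} in reverse. Given $\kappa_0$ and $H$, first solve the first-order ODE $\kappa'(u) = H(\kappa)$ with $\kappa(u_0)=\kappa_0$ on a neighborhood of $u_0$, then solve the linear ODE $h''(u)=-\kappa(u)h(u)$ for a positive profile $h$, and form the surface of revolution with the resulting profile curve (adjusting for the change of variables $u = \int \sqrt{1+f'(r)^2}\,\d r$). By construction its Gaussian curvature is $\kappa(u)$, satisfies $\kappa'(u)=H(\kappa)$, and equals $\kappa_0$ at the point corresponding to $u_0$.

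For local uniqueness, let $\Sigma_1,\Sigma_2$ be two surfaces of revolution satisfying \eqref{eq: structure equations for surfaces of revolution} for the same $H$, and let $p_i \in \Sigma_i$ satisfy $\kappa(p_i)=\kappa_0$. The lifted coframes on $\Sigma_i \times \mathrm{S}^1$ are realizations of the same Cartan problem, and the points $(p_i,0)\in \Sigma_i\times\mathrm{S}^1$ both map to $(\kappa_0,0)\in \Rr\times\mathrm{S}^1$ under the classifying map. Theorem \ref{thm: classification coframes}(ii) then yields a local equivalence of realizations $\phi$ with $\phi(p_1,0)=(p_2,0)$ and $\phi^*(\theta_2^1,\theta_2^2,\eta_2)=(\theta_1^1,\theta_1^2,\eta_1)$. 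Because each $\eta_i$ restricts on the $\mathrm{S}^1$-factor to its Maurer-Cartan form, the identity $\phi^*\eta_2=\eta_1$ forces $\phi$ to intertwine the two rotation actions; hence $\phi$ descends to a local isometry $\bar\phi:\Sigma_1\to\Sigma_2$ with $\bar\phi(p_1)=p_2$.

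The only step requiring genuine care beyond a direct appeal to the general theorem is this last descent from a three-dimensional equivalence on $\Sigma_i\times\mathrm{S}^1$ to a two-dimensional isometry on $\Sigma_i$, which hinges on recognizing the $\mathrm{S}^1$-direction intrinsically inside the coframe — as the vector field dual to $\eta$, annihilating $\theta^1$ and $\theta^2$ — and using that any equivalence of realizations preserves it automatically.
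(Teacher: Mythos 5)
Your proof is correct, but it takes a more hands-on route than the paper, which disposes of the proposition in one line: the structure equations determine a Lie algebroid for every $H$, this algebroid is transitive, and Theorem \ref{thm: classification coframes} then yields both existence (via the $\s$-fibers of an integrating groupoid carrying the Maurer--Cartan form) and local uniqueness. Your existence argument instead integrates the ODEs $\kappa'=H(\kappa)$ and $h''=-\kappa h$ directly; this buys something real, since the abstract theorem only produces a three-dimensional coframe realization, and the paper itself concedes in a remark that its algebroid classifies the coframe $(\theta^1,\theta^2,\eta)$ rather than the surface itself --- your construction delivers an actual surface of revolution (modulo the small point that realizing the metric $du^2+h(u)^2\,dv^2$ as a rotation surface in $\Rr^3$ requires $|h'|\leq 1$, which you can arrange by the choice of initial data for $h$; intrinsically the metric is a surface of revolution regardless). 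Your descent argument for uniqueness --- identifying the $\mathrm{S}^1$-direction as the frame vector dual to $\eta$, observing that any equivalence of realizations preserves the dual frame and hence intertwines the circle actions, and concluding that it covers an isometry of the quotients --- is also correct and fills in a step the paper passes over in silence. One caveat you share with the paper: the structure equations \eqref{eq: structure equations for surfaces of revolution} involve $J$ as well as $H$, and the Riccati relation \eqref{eq: differential relation} determines $J$ from $H$ only up to the initial condition $J(\kappa_0)=h'(u_0)/h(u_0)$; since $\Delta\kappa=(H'(\kappa)+J(\kappa))H(\kappa)$ is an isometry invariant, two surfaces with the same $H$ but different values of $J(\kappa_0)$ are \emph{not} locally isometric, so the uniqueness assertion must be read --- as your hypothesis that both surfaces satisfy the same structure equations in effect does --- as applying to realizations of one and the same Cartan problem, i.e.\ with $J$ fixed as well.
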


We can also use the classifying Lie algebroid to describe the infinitesimal symmetries of surfaces of revolution (see Proposition \ref{prop:inf:sym:coframe} and Theorem \ref{thm:Lie:symmetries}). For this, we must distinguish the cases $\kappa'(u)\not=0$ and $\kappa'(u)=0$:

\begin{description}
\item[$H(\kappa)\not=0$] In this case, which is generic, it is easy to see that the anchor is surjective, and thus the symmetry Lie algebra is $1$-dimensional, which corresponds to rotation around the axis of revolution. 

\item[$H(\kappa)=0$] In this case, the surface of revolution has constant curvature. The orbits of the corresponding Lie algebroid are $0$-dimensional, and thus, the symmetry Lie algebra is $3$-dimensional. We have the following 3 possibilities:
\begin{center}
\begin{tabular}{||c|c|c||}\hline
$\mathfrak{sl}_{2}$   & \footnotesize{if $\kappa < 0$} & \footnotesize{Hyperbolic Geometry}  \\
\hline $\mathfrak{se}_{2}$ & \footnotesize{if $\kappa=0$}   & \footnotesize{Euclidean Geometry}        \\
\hline $\mathfrak{so}_{3}$ & \footnotesize{if $\kappa>0$} & \footnotesize{Spherical Geometry}  \\
\hline
\end{tabular}
\end{center}
\end{description}

\begin{remark}
Note that a surface of revolution which has non-constant $H(\kappa)$ which passes through $0$ \emph{cannot} appear as a realization of our Cartan's problem. The reason is that the corresponding coframe will not be fully regular.
\end{remark}

\begin{remark}
Strictly speaking the classifying Lie algebroid described above (and also the conclusions deduced from it) concern the coframe $\theta^1, \theta^2, \eta$, and not the surface itself. However, it will be shown in \cite{FernandesStruchiner} how to deal with Cartan's realization problem on finite type $G$-structures. In particular, the realization problem discussed here should be treated as a realization problem on the orthogonal frame bundle of the surface of revolution. 
\end{remark}

We remark that for each choice of a function $H$, the classifying Lie algebroid discussed above is the classifying Lie algebroid of a single coframe; namely the coframe $\theta^1,\theta^2, \eta$ on the orthogonal frame bundle of the surface of revolution. However, the (local) moduli space of surfaces of revolution is clearly infinite dimensional. Thus, in order to describe a class of surfaces of revolution, we will consider here only those for which $H$ is constant.

\begin{remark}
The condition that $H$ is constant can be seen a second order differential equation on the curvature tensor $R$ of the induced metric on the surface of revolution. In fact, using the coordinates $u,v$ introduced earlier, we can express the class of surfaces  being considered as the class of surfaces of revolution for which
\[\nabla_{\frac{\partial}{\partial u}}(\nabla R) = 0,\]
where $\nabla$ is the Levi-Civita connection of the surface. Since this condition is equivalent to $\kappa(u)$ being an affine map, we will denote such surfaces by \textbf{affinely  curved surfaces of revolution}.
\end{remark}

Note now that since we are not  prescribing a specific value for the constant $H$, we must consider it as a new invariant function for affinely curved surfaces of revolution which is subject to the condition $\d H = 0$. Similarly, $J$ is no longer determined and must also be added to our set of invariant functions. It then follows from \eqref{eq: differential relation} that whenever $H \neq 0$ (which we assume from now on)
\[\d J = -(\kappa + J^2)((\sin t) \theta^1 - (\cos t) \theta^2).\]

Thus, as a complete set of structure equations for affinely curved surfaces of revolution one obtains:
 \begin{equation}\label{eq: structure equations for affinely curved surfaces of revolution}
\left\{
\begin{array}{l}
\d\theta^1 = -\eta \wedge \theta^2 \\
\d\theta^2 = \eta \wedge \theta^1\\
\d\eta = \kappa \theta^1 \wedge \theta^2\\
\d \kappa = H(\kappa)((\sin t)\theta^1 - (\cos t)\theta^2)\\
\d t = \eta + J(\kappa)((\cos t)\theta^1 +(\sin t)\theta^2)\\
\d H = 0\\
\d J = -(\kappa + J^2)((\sin t) \theta^1 - (\cos t) \theta^2).
\end{array}\right. \end{equation}

Again, it is easily verified that these structure equations have as a formal consequence that $\d^2 = 0$, and thus they determine the classifying Lie algebroid $B$ for affinely curved surfaces of revolution. As a vector bundle,  $B \cong Y\times \Rr^3 \to Y$, where
\[Y = \set{(\kappa, H, J, t) \in \Rr^3 \times \mathrm{S}^1: H \neq 0}.\] 
Its Lie bracket is given on the constant sections $e_1,e_2$ and $e_3$ by 
\[
\begin{array}{l}
\left[e_1, e_2\right](\kappa, H, J, t) =  -\kappa e_3 \\
\left[e_1, e_3\right] (\kappa, H, J, t)= e_2 \\
\left[e_2, e_3\right] (\kappa, H, J, t)= -e_1
\end{array}\]
and its anchor is given by
\[
\begin{array}{l}
\sharp(e_1)(\kappa,H,J, t) =  H\sin t \frac{\partial}{\partial \kappa}  - (\kappa + J^2)\sin t\frac{\partial}{\partial J}+ J(\kappa)\cos t \frac{\partial}{\partial t} \\
\sharp(e_2)(\kappa,H,J, t) =  -H\cos t \frac{\partial}{\partial \kappa} +(\kappa + J^2)\cos t\frac{\partial}{\partial J} + J(\kappa)\sin t \frac{\partial}{\partial t} \\
\sharp(e_3)(\kappa,H,J, t) = \frac{\partial}{\partial t}.
\end{array}\]

Observe that the orbits of this Lie algebroid are 2-dimensional, so we can conclude that:

\begin{prop}
For any $\kappa_0, H_0, J_0$ with $H_0 \neq 0$ there exists an affinely curved surface of revolution whose structure invariants take the values $\kappa_0, H_0, J_0$. Moreover,
\begin{enumerate}
\item Every affinely curved surface of revolution has a one dimensional symmetry Lie algebra.
\item If two affinely curved surfaces of revolution are open submanifolds of a third affinely curved surface of revolution, then there invariant constant function $H$ must agree.
\end{enumerate}
\end{prop}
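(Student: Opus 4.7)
The plan is to apply the general machinery of the paper directly to the classifying Lie algebroid $B\to Y$ just constructed. Since the structure equations \eqref{eq: structure equations for affinely curved surfaces of revolution} satisfy $\d^2=0$ as a formal consequence, the initial data read off from them determine a bona fide Lie algebroid structure on $B$ via Proposition \ref{NecessaryRealization}. The three claims then follow directly from Theorem \ref{thm: classification coframes}, Proposition \ref{prop:inf:sym:coframe}, and a one-line inspection of the anchor of $B$.

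\emph{Existence.} Given $(\kappa_0, H_0, J_0)$ with $H_0\neq 0$, pick any $t_0\in S^1$ so that $y_0:=(\kappa_0, H_0, J_0, t_0)\in Y$. Theorem \ref{thm: classification coframes}(i) produces a realization $(M,\theta,h)$ of the Cartan problem with $h(p_0)=y_0$ for some $p_0\in M$; this $M$ is the orthogonal coframe bundle of the desired affinely curved surface of revolution, and its structure invariants at $p_0$ take the prescribed values.

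\emph{Proof of (1).} Here $\dim M =3$ (the coframe has three components $\theta^1,\theta^2,\eta$) while the orbit of $B$ through $h(p_0)$ is 2-dimensional, as observed just before the statement. Proposition \ref{prop:inf:sym:coframe} immediately gives that the symmetry Lie algebra at $p_0$ has dimension $\dim M - \dim L = 1$.

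\emph{Proof of (2).} The decisive observation is that $H$ is a Casimir of $B$: inspection of the explicit formulas for $\sharp(e_1), \sharp(e_2), \sharp(e_3)$ shows that none of these vector fields has a $\partial/\partial H$ component, so $\sharp(B)\subset \ker\d H$ and $H$ is constant along every orbit of $B$. Now if $\Sigma_1$ and $\Sigma_2$ are open submanifolds of a third affinely curved surface $\Sigma_3$ via embeddings $\iota_i$ which preserve the coframes and classifying maps, then $h_i = h_3\circ\iota_i$. Since $\Sigma_3$ is connected, the corollary following Proposition \ref{prop:morphisms} implies that $h_3(\Sigma_3)$ lies in a single orbit of $B$, so $H\circ h_3$ takes a unique constant value; the invariant $H_i = H\circ h_i$ on $\Sigma_i$ then coincides with that common constant for $i=1,2$. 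The only step requiring genuine attention is the Casimir observation for $H$; everything else is a mechanical application of results already established in the paper.
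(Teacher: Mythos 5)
Your proposal is correct and follows essentially the same route as the paper: the paper states this proposition as an immediate consequence of the computation that the orbits of $B$ are $2$-dimensional, combined with Theorem \ref{thm: classification coframes} (existence through every point of $Y$) and Proposition \ref{prop:inf:sym:coframe} ($\dim M-\dim L=3-2=1$ for the symmetry algebra). Your only added detail --- that the anchor image has no $\partial/\partial H$ component, so $H$ is constant on each orbit and hence on the image of any connected realization --- is exactly the observation implicit in the paper's claim (2).
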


\appendix
\section{The classifying Lie algebroid of a single coframe}  
\label{append:single:coframe}                   

In this appendix we will use the jet bundle approach to give a coordinate free construction of the classifying Lie algebroid associated to a fully regular coframe $\theta$ on a manifold $M$. We begin by recalling some general constructions.

Let $N$ be a manifold and let $\pi: N \to M$ be a smooth map. We denote by $\Jet^1N\to M$ the bundle of 1-jets of sections of $\pi$. Let us denote by $\mathrm{Hom}(TN, \pi^*TM)$ the vector bundle over $N$ whose sections are bundle maps 
\[ \xymatrix{TN \ar[rr] \ar[rd]& & \pi^*TM\ar[dl]\\
& N, & }\]
and by $\Jet^1(\mathrm{Hom}(TN, \pi^*TM))$ the bundle of $1$-jets of sections of $\mathrm{Hom}(TN, \pi^*TM)$. After a choice of a (local) flat connection, we can identify a point of $\Jet^1(\mathrm{Hom}(TN, \pi^*TM))$ in the fiber over $x\in N$ with a pair $(q, l_q)$, where $q\in \mathrm{Hom}_x(TN, \pi^*TM)$ and
\[l_q: T_xN \to \mathrm{Hom}_x(TN, \pi^*TM)\]
is a linear map. It follows that we may define a bundle map 
\[\xymatrix{\Jet^1(\mathrm{Hom}(TN, \pi^*TM)) \ar[rr]^{\mathrm{Alt}} \ar[dr] && \mathrm{Hom}(\wedge^2TN, \pi^*TM) \ar[dl]\\
&N&}\]
given locally by
\[\mathrm{Alt}(q,l_q)(v\wedge w) = \frac{1}{2}l_q(v)(w) - l_q(w)(v).\]
It is easy to check that this formula does not depend on the choice of (local) flat connection, so the bundle map is well defined.

\begin{example}
If we take $M$ to be the real line $\Rr$ and $\pi$ to be a constant map, then the construction above yields a map 
\[\xymatrix{\Jet^1(T^*N) \ar[rr]^{\mathrm{Alt}} \ar[dr] & & \wedge^2T^*N \ar[dl]\\
& N &}\]
that satisfies
\[\mathrm{Alt}(\jet^1\al) = \d \al.\] 
\end{example}

Now, note that we may view $\pi_*$ as a section of $\mathrm{Hom}(TN, \pi^*TM)$. Thus, by applying $\mathrm{Alt}$, we obtain a section $\mathrm{Alt}(\jet^1\pi_*) \in \mathrm{Hom}(\wedge^2TN, \pi^*TM)$.

\begin{definition}
The \textbf{structure tensor} of $\pi: N \to M$ is the map
\[c: \Jet^1N \to \mathrm{Hom}(\wedge^2TM, \pi^*TM)\]
defined by
\[c(H_p)(v\wedge w) = \mathrm{Alt}(\jet^1\pi_*)(\tilde{v}\wedge \tilde{w}),\]
where $\tilde{v}$ and $\tilde{w}$ are the horizontal lifts of $v, w \in T_{\pi(p)}M$ to $H_p$.   
\end{definition} 

\begin{remark}
In the definition above, we have identified $\jet^1_x s \in \Jet^1N$ with the horizontal subspace $H_{s(x)} = \d_x s(T_xM)$ of $T_{s(x)}N$.
\end{remark}

\begin{example}
In the case where $N$ is the frame bundle  of $M$ (or more generally a $G$-structure over $M$), the structure tensor is simply its first order structure function (see, for example, \cite{Sternberg, SingerSternberg} or \cite{FernandesStruchiner}).
\end{example}

After these preliminary remarks, we now return to intrinsic construction of the classifying Lie algebroid of a fully regular coframe $\theta$ on a manifold $M$. Let us denote by $\B(M)$ the frame bundle of $M$. Thus,
\[\xymatrix{\B(M) \ar[d]_{\pi} & \ar@(dr,ur)@<-5ex>[]_{\GL_n}\\
M}
\]
is a principal $\GL_n$-bundle whose fiber over a point $p$ is
\[\pi^{-1}(p) = \set{\phi: \Rr^n \to T_pM: \phi \text{ is a linear isomorphism}}.\]

The coframe $\theta$ can be identified with a section of $\B(M)$. It gives rise to trivializations $TM \cong M \times \Rr^n$ and $T^*M \cong M \times \Rr^n$ and with respect to the canonical base of $\Rr^n$, we can write $\theta = (\theta^1, \ldots, \theta^n)$ or dually $\frac{\partial}{\partial \theta} = (\theta_1, \ldots, \theta_n)$. A basis of sections of $\wedge^2T^*M \otimes TM$ is then given by
\[\theta^{ij}_k =  (\theta^i \wedge\theta^j) \otimes \theta_k.\]
With respect to this basis, the structure tensor $c$ applied to $\jet^1\theta$ gives rise to a section of $\wedge^2T^*M \otimes TM$ whose coordinates are the structure functions, i.e.,
\[c(\jet^1\theta) = \sum C^k_{ij}\theta^{ij}_k.\]
It follows that we may view the set $\F_0(M)$ of structure functions of $\theta$ as the section $c(\jet^1\theta)$ of $\mathrm{Hom}(\wedge^2TM, TM)$.

Note also that $\jet^1 c$ is a map
\[\Jet^1(\Jet^1\B(M)) \to \Jet^1(\mathrm{Hom}(\wedge^2TM, \pi^*TM)).\]
Recall that $\Jet^1(\mathrm{Hom}(\wedge^2TM, \pi^*TM))$ is isomorphic to
\[(\wedge^2T^*M \otimes TM) \times \mathrm{Hom}(TM, \wedge^2T^*M \otimes TM).\]
Let us denote by $\nabla$ the flat connection on $\wedge^2T^*M \otimes TM$ induced be $\theta$. Then a basis of sections of $\mathrm{Hom}(TM, \wedge^2T^*M \otimes TM)$ is given by
\[\theta^{ij}_{k,l} = \nabla_{\theta_l}\theta^{ij}_k.\]
With respect to this basis, the section $\jet^1c(\jet^2\theta)$ of $\Jet^1( \mathrm{Hom}(\wedge^2TM, TM))$ is written as
\[\jet^1c(\jet^2\theta) = \sum C^k_{ij}\theta^{ij}_k + \frac{\partial C^k_{ij}}{\partial \theta_l}\theta^{ij}_{k,l}.\]
Thus we may view the set $\F_1(M)$ formed by the structure functions and its coframe derivatives as the section $\jet^1c(\jet^2\theta)$ of 
\[\Jet^1( \mathrm{Hom}(\wedge^2TM, TM)).\]
If we continue in this way, we may identify the set $\F_r(M)$ of all coframe derivatives of the the structure functions up to order $r$ with the section 
\[\jet^{r-1}c(\jet^r \theta) \in \Gamma(\Jet^r( \mathrm{Hom}(\wedge^2TM, TM))),\]
which we shall call the \textbf{$r$-th order structure section} of $\theta$.

Now, the coframe $\theta$ induces a trivialization of $\Jet^r( \mathrm{Hom}(\wedge^2TM, TM))$. Thus we have an identification between all its fibers. We shall denote them by $\mathbb{K}_r$ and by 
\[p^r_{\theta}: \Jet^r( \mathrm{Hom}(\wedge^2TM, TM)) \to \mathbb{K}_r\] 
the associated projection. We observe that the coframe $\theta$ is fully regular if and only if each of the maps
\[\xymatrix{ M  \ar@/_1.0cm/[rrrrrr]_{\tilde{\kappa}_r} \ar[rrr]^-{\jet^{r-1}c(\jet^r \theta)} &&&  \Gamma(\Jet^r( \mathrm{Hom}(\wedge^2TM, TM)))\ar[rrr]^-{p^r_{\theta}} &&& \mathbb{K}_r}\]
has constant rank.

Thus, we can summarize the construction made so far by saying that each coframe $\theta$ on $M$, viewed as a section of $\B(M)$, gives rise to a section 
\[ s_{\theta}\in \Gamma(\Jet^{\infty}(\mathrm{Hom}(\wedge^2TM,TM)).\] 
Moreover, since the coframe $\theta$ induces a trivialization of $\Jet^{\infty}(\mathrm{Hom}(\wedge^2TM,TM))$, there is a natural projection
\[p_{\theta}: \Jet^{\infty}(\mathrm{Hom}(\wedge^2TM,TM)) \to \mathbb{K}_{\infty},\]
where $\mathbb{K}_{\infty}$ denotes the fiber of $\Jet^{\infty}(\mathrm{Hom}(\wedge^2TM,TM))$. If, additionally, we assume that $\theta$ is fully regular, then $\tilde{\kappa}_{\theta} = p_{\theta} \circ s_{\theta}$ has constant rank, and thus its image in $\mathbb{K}_{\infty}$ is an immersed submanifold (possibly with self intersection) which we denote by $\tilde{X}$, i.e., there is a manifold $X$, and an immersion (not necessarily injective) $\phi: X \to \mathbb{K}_{\infty}$ such that
\[\xymatrix{ M \ar[drrr]_{\tilde{\kappa}_{\theta}} \ar@/^1.2cm/[rrrrrr]^{\tilde{\kappa}_\infty} \ar[rrr]^-{s_\theta} &&&  \Gamma(\Jet^{\infty}( \mathrm{Hom}(\wedge^2TM, TM)))\ar[rrr]^-{p_{\theta}} &&& \mathbb{K}_{\infty}\\
&&& X \ar[rrru]_{\phi}&&&}\]

We remark that again, as in Section \ref{subsec:coframes:2}, we may take $X$ to be the quotient of $M$ by the equivalence relation where two  points $p$ and $q$ are identified if and only if there is a locally defined formal equivalence of $\theta$ which takes $p$ to $q$. Thus, by construction, we may view the structure section of order zero $c(\jet^1\theta) \in \Gamma(\mathrm{Hom}(\wedge^2TM, TM))$ as being defined on $X$. In other words, there is a smooth map 
\[\hat{c} : X \to \mathrm{Hom}(\wedge^2TM, TM),\]
such that
\[\xymatrix{M \ar[dr]_-{\jet^1c(\theta)} \ar[rr]^{\tilde{\kappa}_{\theta}} && X \ar[dl]^{\hat{c}}\\
& \mathrm{Hom}(\wedge^2TM, TM) &}\]
It is convenient to use the coframe $\theta$ to fix a trivialization of $TM$ so that $\hat{c}$ becomes a map
\[\hat{c}: X \to \mathrm{Hom}(\wedge^2\Rr^n,\Rr^n).\]

Finally, if we define $n$ vector fields on $X$ through
\[F_i = (\kappa_{\theta})_*\frac{\partial}{\partial \theta_i} \in \X(X),\]
then we can describe the classifying Lie algebroid $A_{\theta} \to X$ of $\theta$ explicitly as follows.
We take $A_{\theta}$ to be the trivial vector bundle $A_{\theta} = X \times \Rr^n$ and define its structure by
\begin{align*}
[e_i, e_j]&= -\hat{c}(e_i\wedge e_j)\\
\sharp(e_i) & = F_i
\end{align*}
where $\{e_1,\dots,e_n\}$ denotes de canonical basis of sections.

\bibliographystyle{amsplain}
\bibliography{bibliography}

\end{document}